\newcommand{\bbC}{\mathbb{C}}
\newcommand{\bbN}{\mathbb{N}}
\newcommand{\bbR}{\mathbb{R}}
\newcommand{\calL}{\mathcal{L}}
\newcommand{\calT}{\mathcal{T}}
\DeclareMathOperator{\id}{id} 
\DeclareMathOperator{\one}{{\mathbbm{1}}} 
\DeclareMathOperator{\re}{Re} 
\newcommand{\argument}{\mathord{\,\cdot\,}} 
\newcommand{\dx}{\;\mathrm{d}} 
\newcommand{\norm}[1]{\left\lVert #1 \right\rVert} 
\newcommand{\modulus}[1]{\left\lvert #1 \right\rvert} 
\newcommand{\duality}[2]{\left\langle#1\, ,\, #2\right\rangle} 
\newcommand{\dom}[1]{\operatorname{dom}\left(#1\right)} 
\DeclareMathOperator{\Ima}{Rg} 
\newcommand\restrict[1]{\raisebox{-.5ex}{$|$}_{#1}} 
\newcommand{\spec}{\sigma} 
\newcommand{\Res}{\mathcal{R}} 
\newcommand{\spr}{r} 
\newcommand{\spb}{s} 
\newcommand{\gbd}{\omega_0} 
\theoremstyle{definition}
\newtheorem{definition}{Definition}[section]
\newtheorem{remark}[definition]{Remark}
\newtheorem*{remark*}{Remark}
\newtheorem*{remarks*}{Remarks}
\newtheorem{example}[definition]{Example}
\theoremstyle{plain}
\newtheorem{proposition}[definition]{Proposition}
\newtheorem{lemma}[definition]{Lemma}
\newtheorem{theorem}[definition]{Theorem}
\newtheorem{corollary}[definition]{Corollary}
\newtheorem{conditions}[definition]{Conditions}
\numberwithin{equation}{section} 
\begin{document}

\title[Irreducibility of eventually positive semigroups]{Irreducibility of eventually positive semigroups}
\author{Sahiba Arora}
\address[S. Arora]{Technische Universität Dresden, Institut für Analysis, Fakultät für Mathematik , 01062 Dresden, Germany}
\email{s.arora-1@utwente.nl}
\author{Jochen Gl\"uck}
\address[J. Gl\"uck]{Bergische Universit\"at Wuppertal, Fakult\"at f\"ur Mathematik und Naturwissenschaften, Gaußstr.\ 20, 42119 Wuppertal, Germany}
\email{glueck@uni-wuppertal.de}
\subjclass[2010]{47B65, 47D06, 46B42, 47A10}
\keywords{Eventual positivity; irreducibility; long-term behaviour; perturbation}
\date{\today}
\begin{abstract}
    Positive $C_0$-semigroups that occur in concrete applications are, more often than not, irreducible. Therefore a deep and extensive theory of irreducibility has been developed that includes characterizations, perturbation analysis, and spectral results.
	    Many arguments from this theory, however, break down if the semigroup is only eventually positive  -- a property that has recently been shown to occur in numerous concrete evolution equations. 
	
	    In this article, we develop new tools that also work for the eventually positive case. 
	    The lack of positivity for small times makes it necessary to consider ideals that might only be invariant for large times. 
	    In contrast to their classical counterparts -- the invariant ideals -- such eventually invariant ideals require more involved methods from the theory of operator ranges.
	    
	    Using those methods we are able to characterize (an appropriate adaptation of) irreducibility by means of linear functionals, derive a perturbation result, prove a number of spectral theorems, and analyze the interaction of irreducibility with analyticity, all in the eventually positive case. 
	    By a number of examples, we illustrate what kind of behaviour can and cannot be expected in this setting.
\end{abstract}

\maketitle

\section{Introduction}

\subsection*{Eventually positive $C_0$-semigroups}

A $C_0$-semigroup $(e^{tA})_{t\ge 0}$ on a function space or, more generally, on a Banach lattice $E$ is called \emph{individually eventually positive} if, for each $0\leq f\in E$, there exists a time $t_0\ge 0$ such that $e^{tA}f\ge 0$ for all $t\ge t_0$. If the time $t_0$ can be chosen independently of $f$, then the semigroup is said to be \emph{uniformly eventually positive}. 

Since Daners showed in \cite{Daners2014} that the semigroup $(e^{tA})_{t\ge 0}$ generated by the Di\-rich\-let-to-Neumann operator on the unit circle can, for suitable parameter choices, be eventually positive rather than positive, the theory of eventually positive semigroups garnered growing attention from the semigroup community. While the study of the finite-dimensional case has a somewhat longer history and can already be traced back to \cite{NoutsosTsatsomeros2008}, the development of the theory on general Banach lattices began with the papers \cite{DanersGlueckKennedy2016a, DanersGlueckKennedy2016b} and resulted in a variety of articles:
for instance, a characterization of uniform eventual positivity is given in \cite{DanersGlueck2018b}, perturbation results can be found in \cite{DanersGlueck2018a} (we also refer to \cite{ShakeriAlizadeh2017} for related finite-dimensional perturbation results), the long-term behaviour is studied in \cite{ArnoldCoine2023, Arora2023, AroraGlueck2021a, AroraGlueck2021c, Vogt2022}, and the more subtle property of \emph{local eventual positivity} is analyzed in \cite{Arora2022, Arora2023, DanersGlueckMui2023, Mui2023}.
An overview of the state of the art as of the beginning of 2022 can be found in \cite{Glueck2022}.

 In general, the theory incorporates two types of results in its current level of development:

\begin{enumerate}[label=\upshape(\roman*), leftmargin=*, widest=iii]
    \item 
    Characterizations of eventual positivity, though under rather strong technical assumptions.

    \item 
    Consequences of eventual positivity.
\end{enumerate}

The characterization results mentioned in~(i) have already been used to show eventual positivity (and versions thereof) for a large variety of concrete PDEs, for example, the bi-Laplacian with Wentzell boundary conditions \cite{DenkKunzePloss2021}, delay differential equations \cite{DanersGlueckKennedy2016a, DanersGlueck2018b, Glueck2016}, bi-Laplacians on graphs \cite{GregorioMugnolo2020a},
Laplacians with point interactions \cite{HusseinMugnolo2020}, and polyharmonic operators on graphs \cite{BeckerGregorioMugnolo2021}.

Results of type~(ii), on the other hand -- at the current state of the art -- are of a more theoretical nature: they give us necessary conditions for eventual positivity and indicate, at the same time, the limits of the theory.
This paper is mainly about the results of the second type.

\subsection*{Irreducibility for positive semigroups}

A $C_0$-semigroup on a Banach lattice $E$ is said to be \emph{positive} if each semigroup operator leaves the positive cone $E_+$ invariant and a positive $C_0$-semigroup is called \emph{irreducible} if it does not leave any closed ideals invariant except for $\{0\}$ and $E$ itself. 
This notion of irreducibility is motivated by the same concept in finite dimensions where it occurs, for instance, in the classical Perron--Frobenius theorem, in graph theory, and the theory of Markov processes on finite state spaces. 
In infinite dimensions, many positive semigroups that describe the solutions to concrete evolution equations are irreducible.
Detailed theoretical information about such semigroups can, for instance, be found in \cite[Section~C-III.3]{Nagel1986} and \cite[Section~14.3]{BatkaiKramarRhandi2017}. 
A treatment of irreducibility for general semigroups of positive operators -- i.e., not only one-parameter semigroups -- can be found in \cite{GaoTroitsky2014}. 

A property that makes irreducibility efficient to handle is that one can characterize it in terms of duality: a positive $C_0$-semigroup $(e^{tA})_{t \ge 0}$ on a Banach lattice $E$ is irreducible if and only if for each non-zero $f \in E_+$ and each non-zero positive functional $\varphi$ in the dual space $E'$ there exists a time $t \ge 0$ such that $\langle \varphi, e^{tA} f \rangle > 0$. 
This observation is classical, see for instance \cite[Definition~C-III.3.1]{Nagel1986}, but we include a rather detailed analysis of the situation in Proposition~\ref{prop:irreducibility-positive}.

\subsection*{Contributions and organization of the article}

Motivated by the recent theory and applications of eventually positive semigroups, we explore the notion of irreducibility outside the framework of positive semigroups and thereby focus on the eventually positive case. 
This gives rise to two problems that cannot be solved by falling back to classical arguments from the positive case and thus require the use of different methods: 

\begin{enumerate}[label=\upshape(\roman*), leftmargin=*, widest=iii]
    \item 
    Classically, the analysis of invariant ideals for semigroups strongly relies on the positivity of the semigroup operators.
    
    However, as the semigroups we are interested in are only eventually positive, it becomes natural to focus on (the non-existence of) ideals that are only eventually invariant. 
    For individually eventually positive semigroups though, this poses additional challenges since we cannot expect any of the operators in the semigroup to be positive. 
    
    \item 
    For positive semigroups, the above-mentioned fact that irreducibility can be characterized by means of duality is very powerful.
    However, the arguments used to show this characterization in the positive case cannot be directly adapted to treat the eventually positive situation.
\end{enumerate}

In the light of those challenges, we make the following contributions:

Regarding~(i), we distinguish between irreducible semigroups in the classical sense and what we call \emph{persistently irreducible} semigroups, that rely on eventual invariance of closed ideals (see Definition~\ref{def:irreducibility}). 
We show this distinction is indeed necessary in the eventually positive case (Example~\ref{exa:rademacher}), but not in the positive case (Proposition~\ref{prop:irreducibility-positive}). 
For eventually positive semigroups that are analytic, irreducibility and persistent irreducibility are also equivalent (Proposition~\ref{prop:irred-analytic}) and, in some cases, even imply a stronger version of eventual positivity (Proposition~\ref{prop:irred-analytic-strongly}); this is similar to the same phenomenon in the positive case \cite[Theorem~C-III-3.2(b) on p.\,306]{Nagel1986}.

Regarding~(ii), in order to characterize persistent irreducibility by means of duality (Theorem~\ref{thm:irreducibility-eventually-positive}) a new tool is required: we first develop conditions under which non-closed vector subspaces that are individually eventually invariant are automatically uniformly eventually invariant. 
This is possible in the setting of operator ranges that we discuss in Section~\ref{sec:eve-invarance}, see in particular Corollary~\ref{cor:ind-implies-unif-for-certain-operator-ranges}. 
To explain the relevance of this to our framework let us point out that, while we define persistent irreducibility by means of closed ideals, non-closed ideals still play an essential role in the theory. 
This is because important arguments rely on the construction of invariant principal ideals -- which are typically not closed (see the proof of Lemma~\ref{lem:sub-eigenvector}).

As mentioned before, positive irreducible semigroups exhibit many interesting spectral properties. Motivated by this, we study the spectrum of eventually positive persistently irreducible semigroups (Section~\ref{sec:spectrum}) and show that some of these properties are carried over from the positive case. In particular, we show that the spectrum of the generator of an eventually positive persistently irreducible semigroup is guaranteed to be non-empty in some situations.

The perturbation theory of eventually positive $C_0$-semigroups is currently quite limited. 
It was demonstrated in \cite{DanersGlueck2018a} that eventually positive semigroups do not behave well under (positive) perturbations. 
In Section~\ref{sec:perturbations} we show that if the perturbation interacts well with the unperturbed semigroup though, then the eventual positivity is indeed preserved. This enables us to construct eventually positive semigroups that do not satisfy the technical assumptions of the available characterization results in, for instance, \cite{DanersGlueckKennedy2016b} and \cite{DanersGlueck2018b}. 
Consequently, we are able to give an example of an eventually positive (but non-positive) semigroup that is persistently irreducible but not eventually strongly positive (Example~\ref{exa:coupling-irreducible-not-strong}). Analogously to the positive case, such a situation cannot occur for analytic semigroups (Proposition~\ref{prop:irred-analytic-strongly}).

\subsection*{Notation and terminology}

Throughout, we freely use the theory of Banach lattices for which we refer to the standard monographs \cite{Schaefer1974, Zaanen1983, Meyer-Nieberg1991, AliprantisBurkinshaw2006}. 
All Banach spaces and Banach lattices in the article are allowed to be real or complex unless otherwise specified.
Let $E$ be a Banach lattice with positive cone $E_+$. For $f\in E_+$, we alternatively use the notation $f\ge 0$ and write $f\gneq 0$ if $f\ge 0$ but $f\ne 0$.
For each $u\in E_+$, the \emph{principal ideal} of $E$ given by
\[
    E_u := \{f \in E: \text{ there exists }c>0\text { such that }\modulus{f}\le cu \}
\]
is also a Banach lattice when equipped with the \emph{gauge norm}
\[
    \norm{f}_u:= \inf\{c>0: \modulus{f}\le cu\}\qquad (f\in E_u).
\]
The principal ideal $E_u$ embeds continuously in $E$. In addition, if this embedding is dense, then $u$ is called a \emph{quasi-interior point} of $E_+$.

If $E$ is a complex Banach lattice, we denote its real part by $E_{\bbR}$. An operator $A: E \supseteq \dom{A} \to E$ is said to be \emph{real} if
\[
    \dom{A}=\dom{A} \cap E_{\bbR}+i\dom{A}\cap E_{\bbR} \quad \text{and} \quad A(\dom{A}\cap E_{\bbR}) \subseteq E_{\bbR}.
\]
In particular, a bounded operator $T$ between Banach lattices $E$ and $F$ is real if and only if $T(E_{\bbR})\subseteq F_{\bbR}$. We say that $T$ is positive if $T(E_+)\subseteq F_+$. In particular, every positive operator is real. Moreover, we say that $T$ is \emph{strongly positive} if $Tf$ is a quasi-interior point of $F_+$ for each $0\lneq f\in E$. We denote the space of bounded linear operators between $E$ and $F$ by $\calL(E,F)$ with the shorthand $\calL(E):=\calL(E,E)$. The range of $T\in \calL(E,F)$ is denoted by $\Ima T$.

A $C_0$-semigroup $(e^{tA})_{t\ge 0}$ is said to be \emph{real} if each operator $e^{tA}$ is real. It can be easily seen that $(e^{tA})_{t\ge 0}$ is real if and only if $A$ is a real operator. Lastly, we recall that the \emph{spectral bound} $\spb(A)$ of the generator $A$ is defined as
\[
    \spb(A):=\sup\{\re \lambda: \lambda\in \spec(A)\} \in [-\infty,\infty).
\]

\section{Eventual invariance of vector subspaces}
    \label{sec:eve-invarance}

We introduce the notion of a persistently irreducible $C_0$-semigroup in Section~\ref{sec:irreducibility} using the concept of eventually invariant ideals. To aid us in the sequel, we collect here a few preliminary results about eventual invariance.

\begin{definition}
        Let $E$ be a Banach space and let $\calT = (T_i)_{i \in I}$ a net of bounded linear operators on $E$.
	A set $S \subseteq E$ is called\dots 
	\begin{enumerate}[label=\upshape(\alph*), leftmargin=*, widest=iii]
		\item 
		\dots \emph{invariant} under $\calT$ if $T_i S \subseteq S$ for each $i \in I$.
		
		\item 
		\dots \emph{uniformly eventually invariant} under $\calT$ if there exists $i_0 \in I$ such that $T_i S \subseteq S$ for each $i \ge i_0$.
		
		\item 
		\dots \emph{individually eventually invariant} under $\calT$ if for each $f \in S$ there exists $i_0 \in I$ such that $T_i f \in S$ for all $i \ge i_0$. 
	\end{enumerate}
\end{definition}

In the subsequent sections, we will use these concepts specifically for $C_0$-semigroups to study irreducibility and versions thereof.
To this end it will be important to understand under which conditions an individually eventually invariant vector subspace is even uniformly eventually invariant. 
For the case of closed vector subspaces, this is quite easy as long as the index set of the net of operators is not too large. Recall that a subset $B$ of a directed set $(A,\le)$ is said to be \emph{majorizing} if for each $a \in A$, there exists $b\in B$ such that $a\le b$. 
We first consider nets of operators which eventually map into a fixed closed vector subspace; 
as a corollary, we then obtain a result for the eventual invariance of closed vector subspaces.

\begin{proposition}
    \label{prop:ind-implies-unif-for-closed-subspaces-absorbing}
    Let $E,F$ be Banach spaces and let $\calT = (T_i)_{i \in I}$ a net of bounded linear operators from $E$ to $F$ 
    such that the directed set $I$ contains a countable majorizing subset.
    Let $W \subseteq F$ be a closed vector subspace. 

    Assume that for every $x \in E$ there exists $i_0 \in I$ such that $T_i x \in W$ for all $i \ge i_0$. 
    Then $i_0$ can be chosen to be independent of $x$. 
\end{proposition}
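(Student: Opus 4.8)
The plan is to turn the pointwise hypothesis into a countable covering of $E$ by closed subspaces and then invoke the Baire category theorem. Fix a countable majorizing subset $\{j_n : n \in \bbN\}$ of the directed set $I$, which exists by assumption. For each $n \in \bbN$ set
\[
    E_n := \bigcap_{i \in I,\, i \ge j_n} T_i^{-1}(W) = \{x \in E : T_i x \in W \text{ for all } i \ge j_n\}.
\]
Since each $T_i$ is bounded, hence continuous, and $W$ is a closed vector subspace of $F$, each set $T_i^{-1}(W)$ is a closed vector subspace of $E$; consequently every $E_n$ is a closed vector subspace of $E$.

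Next I would show that $E = \bigcup_{n \in \bbN} E_n$. Given $x \in E$, the hypothesis provides some $i_0 \in I$ with $T_i x \in W$ for all $i \ge i_0$. By the majorizing property there is an $n \in \bbN$ with $i_0 \le j_n$, and then every $i \in I$ with $i \ge j_n$ also satisfies $i \ge i_0$, so $T_i x \in W$; that is, $x \in E_n$. Now $E$ is a complete metric space, so by the Baire category theorem at least one of the closed sets $E_N$ has nonempty interior. A vector subspace of $E$ with nonempty interior is necessarily all of $E$ (shift an interior point to the origin to see that a ball around $0$ lies in the subspace, then rescale). Hence $E_N = E$, which says exactly that $T_i x \in W$ for every $x \in E$ and every $i \ge j_N$. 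Thus $i_0 := j_N$ is the desired choice, independent of $x$.

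The only slightly delicate point is the covering step $E = \bigcup_n E_n$: since $i \ge j_n$ does not on its own imply $i \ge i_0$, it is essential to pick $n$ so that $i_0 \le j_n$ and then use transitivity of the order on $I$. Apart from that, the argument only uses the completeness of $E$, the continuity of the $T_i$, and the elementary fact about subspaces with interior points, so no further obstacles are expected.
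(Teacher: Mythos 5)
Your proof is correct and follows essentially the same route as the paper: both define the closed subspaces $E_n$ indexed by the countable majorizing set, verify they cover $E$ using transitivity of the order, and apply Baire's theorem together with the fact that a proper subspace has empty interior. No gaps.
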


\begin{proof}
    Let $J \subseteq I$ be a countable majorizing subset. 
    For each $j \in J$ the set 
    \begin{align*}
        V_j 
        :=
        \{x \in E : \, T_ix \in W \text{ for all } i \ge j\}
    \end{align*}
    is a closed subspace of $E$ and it follows from the assumption that $\bigcup_{j \in J} V_j = E$. 
    Hence, by Baire's theorem, there exists $j_0 \in J$ such that $V_{j_0} = E$. 
\end{proof}

\begin{corollary}
    \label{cor:ind-implies-unif-for-closed-subspaces}
    Let $E$ be a Banach space and let $\calT = (T_i)_{i \in I}$ be a net of bounded linear operators on $E$ 
    such that the directed set $I$ contains a countable majorizing subset.
    Let $V \subseteq E$ be a closed vector subspace. 
	
    If $V$ is individually eventually invariant under $\calT$, then it is even uniformly eventually invariant under $\calT$.
\end{corollary}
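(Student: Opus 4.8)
The plan is to reduce the statement to Proposition~\ref{prop:ind-implies-unif-for-closed-subspaces-absorbing} by restricting the operators in the net to $V$. First I would observe that, since $V$ is a closed vector subspace of the Banach space $E$, it is itself a Banach space under the inherited norm, and that each restriction $T_i|_V$ is a bounded linear operator from $V$ into $E$. Individual eventual invariance of $V$ under $\calT$ then says precisely: for every $x \in V$ there exists $i_0 \in I$ such that $T_i|_V x = T_i x \in V$ for all $i \ge i_0$.

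This is exactly the hypothesis of Proposition~\ref{prop:ind-implies-unif-for-closed-subspaces-absorbing}, applied with the domain Banach space taken to be $V$, the codomain space $F = E$, the net of operators $(T_i|_V)_{i \in I}$, and the closed subspace $W = V \subseteq F$; the assumption that $I$ contains a countable majorizing subset is carried over unchanged. Invoking that proposition yields an index $i_0 \in I$, independent of $x$, such that $T_i x \in V$ for every $x \in V$ and every $i \ge i_0$, i.e.\ $T_i V \subseteq V$ for all $i \ge i_0$, which is exactly the uniform eventual invariance of $V$ under $\calT$.

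There is no substantial obstacle here; the only point requiring a little care is that individual eventual invariance is a statement about the elements of $V$ rather than of $E$, so the proposition must be applied with $V$ — and not $E$ — in the role of the domain Banach space. The completeness needed for the Baire-category argument inside the proof of Proposition~\ref{prop:ind-implies-unif-for-closed-subspaces-absorbing} is then supplied by the closedness of $V$ in $E$.
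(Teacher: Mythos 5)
Your argument is correct and is exactly the paper's proof: restrict the operators to $V$ (a Banach space since it is closed in $E$) and apply Proposition~\ref{prop:ind-implies-unif-for-closed-subspaces-absorbing} with domain $V$, codomain $E$, and $W = V$. The paper states this in one line; you have merely spelled out the same reduction in full detail.
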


\begin{proof}
    Consider the restricted operators $T_i\restrict{V}: V \to E$ and apply Proposition~\ref{prop:ind-implies-unif-for-closed-subspaces-absorbing}.
\end{proof}

We note that if a set $S \subseteq E$ is uniformly eventually invariant, then so is its closure $\overline{S}$. 
However, individual eventual invariance of $S$ does not imply individual eventual invariance of $\overline{S}$, in general.
This is a serious obstacle in the subsequent section, especially in the proof of Theorem~\ref{thm:irreducibility-eventually-positive}. 
To overcome it, we need a generalisation of Corollary~\ref{cor:ind-implies-unif-for-closed-subspaces} to the so-called \emph{operator ranges}. 

A vector subspace $V$ of a Banach space $E$ is called an \emph{operator range} in $E$ if there exists a complete norm $\norm{\argument}_V$ on $V$ which makes the embedding of $V$ into $E$ continuous. 
If existent, such a norm on $V$ is uniquely determined up to equivalence due to the closed graph theorem.
It follows easily from a quotient space argument that $V$ is an operator range if and only if there exists a Banach space $D$ and a bounded linear operator $T: D \to E$ with range $V$ (this explains the terminology \emph{operator range}). The concept of operator ranges was studied in depth in \cite{Cross1980, CrossOstrovskiiShevchik1995} and was recently used to prove an abstract characterization of the individual maximum and anti-maximum principle in \cite{AroraGlueck2022a} (see also \cite[Chapter~4]{Arora2023}).

The intersection of finitely many operator ranges is again an operator range (the proof is easy; details can be found in \cite[Proposition~2.2]{Cross1980} or \cite[Proposition~4.1.5]{Arora2023}). 
The next example shows that the intersection of infinitely many operator ranges is not an operator range, in general. 
The subsequent Proposition~\ref{prop:intersection-of-operator-ranges}, however, serves as a reasonable substitute.

\begin{example}
	Let $Q$ denote the set of all quasi-interior points in the cone of the Banach lattice $c_0$, i.e., the set of all $0 \le f \in c_0$ such that $f_k > 0$ for each $k \in \bbN$. 
	For each $f \in Q$ the principal ideal $(c_0)_f$ is an operator range, being complete with respect to the gauge norm $\norm{\argument}_f$. 
	However, the intersection 
	\begin{align*}
		\bigcap_{f \in Q} (c_0)_f
	\end{align*}
	can easily be checked to coincide with the space $c_{00}$ of sequences with only finitely many non-zero entries, and this space cannot be an operator range since it has a countable Hamel basis (so in particular, it is not complete under any norm).
\end{example}

\begin{proposition}
    \label{prop:intersection-of-operator-ranges}
    Let $E$ be a Banach space, let $I$ be a non-empty set, and for each $i \in I$, let $V_i \subseteq E$ be an operator range with a complete norm $\norm{\argument}_{i}$ which makes the embedding of $V_i$ into $E$ continuous.
    Moreover, for each $i \in I$, let $c_i > 0$ be a real number.
    Then the vector subspace
    \begin{align*}
        V := \Big\{ v \in \bigcap_{i \in I} V_i : \; \norm{v}_V := \sup_{i \in I} c_i \norm{v}_{i} < \infty \Big\}
    \end{align*}
    is complete when equipped with the norm $\norm{\argument}_V$ and the corresponding embedding into $E$ is continuous. In particular, $V$ is an operator range.
\end{proposition}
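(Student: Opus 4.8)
The plan is to verify directly the three assertions that make up the statement: that $\norm{\argument}_V$ is a norm on $V$, that the inclusion $V \hookrightarrow E$ is continuous, and that $(V, \norm{\argument}_V)$ is complete. Once these are in place, the final claim that $V$ is an operator range is immediate from the definition (alternatively, one may view the inclusion map as a bounded surjection from the Banach space $V$ onto its range in $E$).

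The first two points are routine. That $\norm{\argument}_V$ is a seminorm on $V$ is clear, being a pointwise supremum of the seminorms $c_i \norm{\argument}_i$ restricted to $V$; and if $\norm{v}_V = 0$, then $\norm{v}_i = 0$ for every $i \in I$, so $v = 0$ because each $\norm{\argument}_i$ is a genuine norm on $V_i \ni v$ (this is where $I \neq \emptyset$ enters). For continuity of the embedding, I would fix one index $i_0 \in I$; since $V_{i_0} \hookrightarrow E$ is continuous there is a constant $M$ with $\norm{v}_E \le M \norm{v}_{i_0}$ for all $v \in V_{i_0}$, and hence $\norm{v}_E \le (M/c_{i_0}) \norm{v}_V$ for every $v \in V$.

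The substantial part is completeness. Given a Cauchy sequence $(v_n)_{n \in \bbN}$ in $(V, \norm{\argument}_V)$, note that for each fixed $i \in I$ we have $c_i \norm{v_n - v_m}_i \le \norm{v_n - v_m}_V$, so $(v_n)$ is Cauchy in the Banach space $(V_i, \norm{\argument}_i)$ and converges there to some $w_i \in V_i$; continuity of $V_i \hookrightarrow E$ then gives $v_n \to w_i$ in $E$ as well. Since limits in $E$ are unique, all the $w_i$ coincide with a single $v \in E$, and $v = w_i \in V_i$ for every $i$, so $v \in \bigcap_{i \in I} V_i$. Boundedness of the Cauchy sequence, say $\norm{v_n}_V \le C$, yields $c_i \norm{v}_i = \lim_n c_i \norm{v_n}_i \le C$ for each $i$, whence $\norm{v}_V \le C < \infty$ and $v \in V$. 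Finally, for $\varepsilon > 0$ pick $N$ with $\norm{v_n - v_m}_V \le \varepsilon$ for $n, m \ge N$; fixing $n \ge N$ and $i \in I$ and letting $m \to \infty$ (using $v_m \to v$ in $\norm{\argument}_i$) gives $c_i \norm{v_n - v}_i \le \varepsilon$, and taking the supremum over $i$ gives $\norm{v_n - v}_V \le \varepsilon$ for all $n \ge N$.

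The only genuinely delicate step is the identification of the separately obtained limits $w_i \in V_i$ with one common vector in $E$: this is precisely where the continuity of each embedding $V_i \hookrightarrow E$, together with working inside the fixed ambient space $E$, is essential, and it is what prevents the argument from being a purely formal manipulation of norms. Everything else is a standard $\varepsilon$-estimate.
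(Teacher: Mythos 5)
Your proof is correct and follows essentially the same route as the paper: identify the limits obtained in each $(V_i,\norm{\argument}_i)$ via uniqueness of limits in the ambient space $E$, then use the Cauchy estimates to show the common limit lies in $V$ and is the limit in $\norm{\argument}_V$. The only cosmetic difference is that the paper first rescales so that all $c_i = 1$ and bounds $\norm{x}_i$ via the Cauchy estimate rather than via continuity of the norm, neither of which changes the substance.
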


It is important to note that the space $(V, \norm{\argument}_V)$ does not only depend on the numbers $c_i$, 
but also on the choice of the norms $\norm{\argument}_{i}$ 
-- while for fixed $i$ all norms on $V_i$ that make the embedding $V_i \to E$ continuous are equivalent, we may re-scale each norm with an $i$-dependent factor. 
For this reason, we can assume in the proof of the proposition that all factors $c_i$ are equal to $1$;
this can be achieved by simply replacing each norm $\norm{\argument}_{i}$ with the norm $c_i \norm{\argument}_{i}$.

\begin{proof}[Proof of Proposition~\ref{prop:intersection-of-operator-ranges}]
    As mentioned before the proof we assume that $c_i = 1$ for each $i$.
    Clearly, the normed space $(V, \norm{\argument}_V)$ embeds continuously into $(V_i, \norm{\argument}_{i})$ for each $i$, so it also embeds continuously into $E$. 
	
    It suffices now to show that $(V, \norm{\argument}_V)$ is complete. 
    To this end, let $(x^n)$ be a Cauchy sequence in $(V,\norm{\argument}_V)$. 
    Then $(x^n)$ is bounded in $(V,\norm{\argument}_V)$, say by a number $M \ge 0$.
    For each $i$, as $(V, \norm{\argument}_V)$ embeds continuously into the Banach space $(V_i, \norm{\argument}_{i})$, there exists $x_i\in V_i$ such that $x^n \to x_i$ in $(V_i, \norm{\argument}_{i})$. Now by the continuous embedding of $V_i$ into $E$, we  obtain that $x^n \to x_i$ in $E$ for each $i$. 
    It follows that $x:=x_i=x_j$ for all $i,j\in I$. 
    In particular, $x \in \bigcap_{i \in I} V_i$.
    
    To show show that $x \in V$ and that $(x^n)$ converges to $x$ with respect to $\norm{\argument}_{V}$, let $\varepsilon > 0$. 
    Since $(x^n)$ is a Cauchy sequence in $(V, \norm{\argument}_{V})$, 
    there exists $n_0$ such that $\norm{x^m-x^n}_{i} \le \varepsilon$ for all $i \in I$ and all $n,m \ge n_0$. 
    For each $i \in I$, the convergence of $x^n$ to $x$ with respect to $\norm{\argument}_{i}$ thus yields $\norm{x^m - x}_{i} \le \varepsilon$ for all $m \ge n_0$. 
    Hence, $\norm{x}_i \le \norm{x^{n_0}}_i + \varepsilon \le M+\varepsilon$, for each $i \in I$, so $x \in V$.
    Moreover, the inequality $\norm{x^m - x}_{i} \le \varepsilon$ for each $i \in I$ and all $m \ge n_0$ shows that $\norm{x^m-x}_V \le \varepsilon$ for all $m \ge n_0$, so indeed $x^n \to x$ in $(V, \norm{\argument}_V)$.
\end{proof}

We are now in a position to show the following generalization of Proposition~\ref{prop:ind-implies-unif-for-closed-subspaces-absorbing}. 
As a consequence, we will obtain a generalization of the eventual invariance result in Corollary~\ref{cor:ind-implies-unif-for-closed-subspaces} to operator ranges (Corollary~\ref{cor:ind-implies-unif-for-certain-operator-ranges}).

\begin{theorem}
    \label{thm:ind-implies-unif-for-certain-operator-ranges-absorbing}
    Let $E,F$ be Banach spaces and let $\calT = (T_i)_{i \in I}$ a net of bounded linear operators from $E$ to $F$ 
    such that the directed set $I$ contains a countable majorizing subset.
    Let $W \subseteq E$ be an operator range, endowed with a complete norm $\norm{\argument}_W$ that makes the inclusion map $W \to E$ continuous. 
    The following assertions are equivalent:
    \begin{enumerate}[label=\upshape(\roman*), leftmargin=*, widest=iii]
        \item\label{thm:ind-implies-unif-for-certain-operator-ranges:itm:uniform} 
        There exists $i_0 \in I$ such that $T_i E \subseteq W$ for each $i \ge i_0$.
        
        \item\label{thm:ind-implies-unif-for-certain-operator-ranges:itm:individual} 
        There are numbers $c_i \geq 0$ for all $i \in I$ that satisfy the following property:
        For each $x \in E$ there exists $i_0 \in I$ such that $T_i x \in W$ and $\norm{T_i x}_W \le c_i \norm{x}_E$ for all $i \ge i_0$.
    \end{enumerate}
\end{theorem}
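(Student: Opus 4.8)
I would prove the two implications separately; the implication \Implies{thm:ind-implies-unif-for-certain-operator-ranges:itm:uniform}{thm:ind-implies-unif-for-certain-operator-ranges:itm:individual} is the easy one. If $i_0$ is as in~\ref{thm:ind-implies-unif-for-certain-operator-ranges:itm:uniform}, then for every $i \ge i_0$ the operator $T_i$ maps $E$ into $W$, and as a map $E \to W$ it has closed graph: if $x_n \to x$ in $E$ and $T_i x_n \to y$ in $W$, then $T_i x_n$ converges both to $y$ and to $T_i x$ in $F$ (using continuity of the inclusion $W \hookrightarrow F$ and of $T_i$), so $T_i x = y \in W$. By the closed graph theorem $T_i \colon E \to W$ is bounded; setting $c_i := \norm{T_i}_{\calL(E,W)}$ for $i \ge i_0$ and $c_i := 0$ otherwise, the single index $i_0$ together with these constants witnesses~\ref{thm:ind-implies-unif-for-certain-operator-ranges:itm:individual}.

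For \Implies{thm:ind-implies-unif-for-certain-operator-ranges:itm:individual}{thm:ind-implies-unif-for-certain-operator-ranges:itm:uniform}, fix constants $c_i \ge 0$ as in~\ref{thm:ind-implies-unif-for-certain-operator-ranges:itm:individual}. First I would observe that for each $i \in I$ the preimage $W_i := T_i^{-1}(W)$ is an operator range in $E$: with the norm $\norm{x}_{(i)} := \norm{x}_E + \norm{T_i x}_W$ it is complete (a Cauchy sequence converges in $E$, its image Cauchy-converges in $W$, and the two limits agree in $F$), and its inclusion into $E$ is continuous. Then, fixing a countable majorizing subset $J \subseteq I$, I would apply Proposition~\ref{prop:intersection-of-operator-ranges} for each $j \in J$ to the family $(W_i)_{i \ge j}$, with these norms and the positive scaling factors $\tfrac{1}{1+c_i}$, to obtain an operator range
\[
    V_j := \Big\{\, x \in \bigcap_{i \ge j} W_i \;:\; \sup_{i \ge j} \tfrac{1}{1+c_i}\bigl(\norm{x}_E + \norm{T_i x}_W\bigr) < \infty \,\Big\} \subseteq E .
\]

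Next I would show $E = \bigcup_{j \in J} V_j$: given $x \in E$, choose $i_0$ as in~\ref{thm:ind-implies-unif-for-certain-operator-ranges:itm:individual} and then $j \in J$ with $j \ge i_0$; for all $i \ge j$ one has $T_i x \in W$ and $\norm{T_i x}_W \le c_i \norm{x}_E$, hence $x \in \bigcap_{i \ge j} W_i$ and $\tfrac{1}{1+c_i}(\norm{x}_E + \norm{T_i x}_W) \le \norm{x}_E$ for every $i \ge j$, i.e.\ $x \in V_j$. The decisive point is then a Baire-category argument resting on the dichotomy that an operator range $V$ in a Banach space is \emph{either} meagre \emph{or} the whole space: if $B$ is the unit ball of $V$ and its closure $\overline{B}$ in $E$ has empty interior, then $V \subseteq \bigcup_{n \in \bbN} \overline{nB}$ is meagre; otherwise $\overline{B}$ contains a ball around $0$ (by symmetry and convexity), and the standard open-mapping iteration, using completeness of $V$, upgrades this to $B$ containing a neighbourhood of $0$ in $E$, so that $V = E$. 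Since $E$ is non-meagre in itself and $J$ is countable, some $V_{j_0}$ is not meagre, hence $V_{j_0} = E$; consequently $E \subseteq \bigcap_{i \ge j_0} W_i$, i.e.\ $T_i E \subseteq W$ for all $i \ge j_0$, which is~\ref{thm:ind-implies-unif-for-certain-operator-ranges:itm:uniform}.

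The main obstacle, by comparison with the closed-subspace statements in Proposition~\ref{prop:ind-implies-unif-for-closed-subspaces-absorbing} and Corollary~\ref{cor:ind-implies-unif-for-closed-subspaces}, is that the natural subspaces $V_j$ here are not closed in $E$, so Baire's theorem cannot be applied to them directly; the two ingredients that replace this are Proposition~\ref{prop:intersection-of-operator-ranges} — which, thanks to the $\tfrac{1}{1+c_i}$-rescaling that assertion~\ref{thm:ind-implies-unif-for-certain-operator-ranges:itm:individual} makes available, keeps $V_j$ an operator range even though $\{\, i \in I : i \ge j \,\}$ may be uncountable — and the meagre-or-everything dichotomy for operator ranges.
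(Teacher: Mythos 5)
Your proof is correct and follows essentially the same route as the paper's: realizing each preimage $T_i^{-1}(W)$ as an operator range with the norm $\norm{x}_E + \norm{T_i x}_W$, forming the rescaled intersection via Proposition~\ref{prop:intersection-of-operator-ranges}, covering $E$ by countably many of these using the majorizing set, and concluding by Baire category. The only differences are cosmetic: you use the scaling factors $1/(1+c_i)$ where the paper normalizes $c_i \ge 1$ and uses $c_i^{-1}$, and you prove the meagre-or-everything dichotomy for operator ranges inline where the paper cites it as an external result.
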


A result that is loosely reminiscent of this theorem can be found in \cite[Theorem~2.1]{AroraGlueck2022a}. 
For the proof of Theorem~\ref{thm:ind-implies-unif-for-certain-operator-ranges-absorbing} we need the following observation~\cite[Proposition~2.5]{AroraGlueck2022a}: 
if $W$ is an operator range in a Banach space $F$, and $T: E \to F$ is a bounded linear operator from another Banach space $E$ into $F$, then the pre-image $V := T^{-1}(W)$ is an operator range in $E$, as the norm $\norm{\argument}_{V}$ given by
\begin{align*}
    \norm{x}_{V} := \norm{x}_E + \norm{Tx}_W 
\end{align*}
for all $x \in V$ is complete and makes the embedding of $V$ into $E$ continuous.

\begin{proof}[Proof of Theorem~\ref{thm:ind-implies-unif-for-certain-operator-ranges-absorbing}]
    Let $W\subseteq E$ be an operator range, endowed with a complete norm $\norm{\argument}_W$ that makes the embedding into $E$ continuous.
    
    ``(i) $\Rightarrow$ (ii)'': This implication immediately follows by choosing $c_i:= \norm{T_i}_{E\to W}$ -- which is finite by closed graph theorem -- for $i\ge i_0$ and $c_i:=0$ for all other $i$.

    ``(ii) $\Rightarrow$ (i)'': Without loss of generality, we assume that $c_i \ge 1$ for each $i\in I$. Let $J\subseteq I$ be a countable majorizing set and for each $j\in J$, let
    \[
        V_j := \bigcap_{i\ge j} T_{i}^{-1}(W).
    \]
    Since $W$ is an operator range in $F$, for each $i \in I$ the subspace $T_{i}^{-1}(W)$ is an operator range in $E$ as it is complete 
    and embeds continuously into $E$ when endowed with the norm 
    given by $\norm{x}_i:= \norm{x}_E+\norm{T_ix}_W$ for all $x \in T_{i}^{-1}(W)$. 
    Thus, by Proposition~\ref{prop:intersection-of-operator-ranges}, the subspace
    \[
        \tilde{V_j}:= \{x\in V_j : \norm{x}_{\tilde{V_j}} := \sup_{i\ge j} c_i^{-1} \norm{x}_i <\infty\}
    \]
    of $E$ is also an operator range for each $j \in J$.

    Next, we observe that $\bigcup_{j\in J} \tilde{V_j} = E$. 
    Indeed, let $x\in E$. As $J$ is majorizing we can, due to assumption~(ii), choose $j\in J$ such that for each $i\ge j$, we have $T_ix\in W$ and $\norm{T_i x}_W \le c_i \norm{x}_E$.
    Hence, for all $i \ge j$, one has $\norm{x}_i \le (1+c_i) \norm{x}_E$ and thus 
    \begin{align*}
        c_i^{-1} \norm{x}_i \le 2 \norm{x}_E,
    \end{align*}
    as we chose each $c_i$ to be at least $1$.
    It follows that $x\in \tilde{V_j}$, as desired. 

    Since $\bigcup_{j\in J} \tilde{V_j} = E$, we can apply Baire's theorem for operator ranges \cite[Proposition~2.6]{AroraGlueck2022a} 
    to conclude that there exists $j\in J$ such that $\tilde{V_j}=E$. 
    In turn, $V_j = E$ and thus $T_i E \subseteq W$ for all $i\ge j$.
\end{proof}

\begin{corollary}
    \label{cor:ind-implies-unif-for-certain-operator-ranges}
    Let $E$ be a Banach space and let $\calT = (T_i)_{i \in I}$ a net of bounded linear operators on $E$ 
    such that the directed set $I$ contains a countable majorizing set.
    For each operator range $V$ in $E$ the following assertions are equivalent:
    \begin{enumerate}[label=\upshape(\roman*), leftmargin=*, widest=iii]
        \item\label{cor:ind-implies-unif-for-certain-operator-ranges:itm:uniform} 
        The space $V$ is uniformly eventually invariant under $T$.
        
        \item\label{cor:ind-implies-unif-for-certain-operator-ranges:itm:individual} 
        There are numbers $c_i \geq 0$ for all $i \in I$ such that the space $V$ satisfies the following quantified individual eventual invariance property:
        
        For each $v \in V$ there exists $i_0 \in I$ such that $T_i v \in V$ and $\norm{T_i v}_V \le c_i \norm{v}_V$ for all $i \ge i_0$.
    \end{enumerate}
\end{corollary}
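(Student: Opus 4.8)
The plan is to deduce the corollary directly from Theorem~\ref{thm:ind-implies-unif-for-certain-operator-ranges-absorbing}, following the same pattern by which Corollary~\ref{cor:ind-implies-unif-for-closed-subspaces} was obtained from Proposition~\ref{prop:ind-implies-unif-for-closed-subspaces-absorbing}: pass to the operators restricted to $V$. First I would fix a complete norm $\norm{\argument}_V$ on $V$ that makes the inclusion $V \hookrightarrow E$ continuous; such a norm exists because $V$ is an operator range, and it is unique up to equivalence by the closed graph theorem. For each $i \in I$ I would then consider the restriction $T_i\restrict{V} \colon (V,\norm{\argument}_V) \to E$; since the inclusion $V \hookrightarrow E$ is continuous and $T_i$ is bounded on $E$, each $T_i\restrict{V}$ is a bounded linear operator from the Banach space $(V,\norm{\argument}_V)$ into the Banach space $E$.

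Next I would apply Theorem~\ref{thm:ind-implies-unif-for-certain-operator-ranges-absorbing} with the space ``$E$'' of that theorem taken to be $(V,\norm{\argument}_V)$, the space ``$F$'' taken to be $E$, the net taken to be $(T_i\restrict{V})_{i\in I}$ (whose index set $I$ contains a countable majorizing subset by hypothesis), and the operator range ``$W$'' taken to be $V$ itself, regarded as a subspace of $F = E$ and equipped with the complete norm $\norm{\argument}_V$. Under this identification, assertion~\ref{thm:ind-implies-unif-for-certain-operator-ranges:itm:uniform} of the theorem reads: there is $i_0 \in I$ with $T_i V = T_i\restrict{V}(V) \subseteq V$ for all $i \ge i_0$, i.e.\ $V$ is uniformly eventually invariant under $\calT$ -- which is exactly assertion~\ref{cor:ind-implies-unif-for-certain-operator-ranges:itm:uniform}. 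Likewise, noting that the norm ``$\norm{\argument}_E$'' appearing in the theorem is now $\norm{\argument}_V$, assertion~\ref{thm:ind-implies-unif-for-certain-operator-ranges:itm:individual} reads: there are numbers $c_i \ge 0$ such that for every $v \in V$ there is $i_0 \in I$ with $T_i v \in V$ and $\norm{T_i v}_V \le c_i \norm{v}_V$ for all $i \ge i_0$ -- which is exactly assertion~\ref{cor:ind-implies-unif-for-certain-operator-ranges:itm:individual}. The equivalence asserted in the theorem then yields the equivalence claimed in the corollary.

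I do not expect a genuine obstacle here: all the substantive work has already been absorbed into Theorem~\ref{thm:ind-implies-unif-for-certain-operator-ranges-absorbing} and, through it, into Proposition~\ref{prop:intersection-of-operator-ranges} and the Baire theorem for operator ranges \cite[Proposition~2.6]{AroraGlueck2022a}. The one point that deserves a careful word is that the single norm $\norm{\argument}_V$ plays the role of both ``$\norm{\argument}_E$'' and ``$\norm{\argument}_W$'' in the theorem; this is harmless, since any two complete norms on the operator range $V$ that make the inclusion into $E$ continuous are equivalent, so the statement ``there exist admissible constants $c_i$'' does not depend on which such norm one picks, and the translation above is faithful.
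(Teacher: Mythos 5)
Your proof is correct and is essentially identical to the paper's own (one-line) argument: the paper also simply applies Theorem~\ref{thm:ind-implies-unif-for-certain-operator-ranges-absorbing} to the restricted operators $T_i\restrict{V}\colon V \to E$. Your additional remarks on the choice of norm and the translation of the two assertions are accurate but just make explicit what the paper leaves implicit.
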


\begin{proof}
    Apply Theorem~\ref{thm:ind-implies-unif-for-certain-operator-ranges-absorbing} to the restricted operators $T_i\restrict{V} : V \to E$.
\end{proof}

In the context of Theorem~\ref{thm:ind-implies-unif-for-certain-operator-ranges-absorbing} and Corollary~\ref{cor:ind-implies-unif-for-certain-operator-ranges} it is worthwhile to mention the general question under which conditions individual properties of operator-valued functions are automatically uniform. 
An abstract framework to study this question was developed by Peruzzetto in~\cite{Peruzzetto2022}.

\section{Irreducibility of $C_0$-semigroups}
    \label{sec:irreducibility}

In this section, we study the irreducibility of $C_0$-semigroups, a notion which stems from the theory of positive semigroups. 
We drop the positivity assumption and first discuss a variety of sufficient conditions for the irreducibility of general $C_0$-semigroups (Proposition~\ref{prop:irreducibility-general}); those conditions are necessary in the positive case (Proposition~\ref{prop:irreducibility-positive}). 
In the eventually positive case, things turn out to be more involved (Example~\ref{exa:rademacher}) and the stronger notion of \emph{persistent irreducibility} turns out to be fruitful for the analysis, see in particular Theorem~\ref{thm:irreducibility-eventually-positive}.

Note that, as $[0,\infty)$ contains the majorizing set $\bbN$, individual and uniform eventual invariance of a closed subspace under a semigroup are equivalent notions (Corollary~\ref{cor:ind-implies-unif-for-closed-subspaces}).

\begin{definition}
    \label{def:irreducibility}
	A semigroup $(e^{tA})_{t \ge 0}$ on a Banach lattice $E$ is called\dots
	\begin{enumerate}[label=\upshape(\alph*), leftmargin=*, widest=iii]
		\item 
		\dots \emph{irreducible} if it has no closed invariant ideals, except for $\{0\}$ and $E$.
		
		\item 
		\dots \emph{persistently irreducible} if it has no closed ideal that is uniformly (equivalently: individually) eventually invariant, except for $\{0\}$ and $E$.
	\end{enumerate}
\end{definition}

Observe that both irreducibility and persistent irreducibility do not change if we replace $A$ with $A + \lambda$ for any $\lambda \in \bbC$.

The name \emph{persistently irreducible} is motivated by the observation that this property means that all the semigroup tails $(e^{tA})_{t \ge t_0}$ for $t_0 \ge 0$ act irreducibly on $E$, i.e., the semigroup is not only irreducible but it also remains irreducible when its action is only considered for large times. 
Obviously, persistent irreducibility implies irreducibility
(which also explains why we avoid the terminology \emph{eventually irreducible}, a notion that one might, at first glance, be tempted to use instead of \emph{persistently irreducible}).

From the definition, it is seen at once that nilpotent semigroups are not persistently irreducible unless $\dim E\le1$. In Example~\ref{exa:rademacher}, we give an example of a semigroup that is nilpotent and irreducible -- thus, showing that the notions irreducibility and persistent irreducibility are not equivalent. 
Throughout we will study a number of sufficient or necessary conditions for irreducibility; they are motivated by \cite[Definition~C-III.3.1(ii)]{Nagel1986}, where positive semigroups are studied.

 \begin{conditions}
    \label{con:irreducibility}
    Let $(e^{tA})_{t\ge 0}$ be a $C_0$-semigroup on a Banach lattice $E$. We study the relationships between the following assertions.
    \begin{enumerate}[label=\upshape(\roman*), leftmargin=*, widest=iii]
	\item\label{con:irreducibility:itm:irred} 
	The semigroup $(e^{tA})_{t \ge 0}$ is irreducible.
		
	\item\label{con:irreducibility:itm:weak-arbi-t} 
	\emph{Weak condition at arbitrary times:} 

        \noindent
        For each $0 \lneq f \in E$ and each $0 \lneq \varphi \in E'$, there exists $t \in [0,\infty)$ such that $\langle \varphi, e^{tA}f \rangle \not= 0$.
		
	\item\label{con:irreducibility:itm:pers-irred} 
	The semigroup $(e^{tA})_{t \ge 0}$ is persistently irreducible.
		
	\item\label{con:irreducibility:itm:weak-large-t-or-0} 
        \emph{Weak condition at large times or $0$:} 

        \noindent
	For each $0 \lneq f \in E$, each $0 \lneq \varphi \in E'$ and each $t_0 \in [0,\infty)$, there exists $t \in \{0\}\cup [t_0,\infty)$ such that $\langle \varphi, e^{tA}f \rangle \ne 0$.
		
	\item\label{con:irreducibility:itm:weak-large-t}
        \emph{Weak condition at large times:} 

        \noindent
	For each $0 \lneq f \in E$, each $0 \lneq \varphi \in E'$ and each $t_0 \in [0,\infty)$, there exists $t \in [t_0,\infty)$ such that $\langle \varphi, e^{tA}f \rangle \ne 0$.
    \end{enumerate}
\end{conditions}

Let us first point out that several implications between these conditions are true for general $C_0$-semigroups:

\begin{proposition}	
    \label{prop:irreducibility-general}
    For a $C_0$-semigroup $(e^{tA})_{t\ge 0}$ on a Banach lattice $E$, the following implications between the Conditions~\ref{con:irreducibility} are true:
    
    \centerline{
	\xymatrix{
            \parbox{3cm}{
                \centering 
                \ref{con:irreducibility:itm:pers-irred}\\
                \emph{persistent irreducibility}
            }
            \ar@{=>}[d]
		&
            \parbox{3.5cm}{
                \centering 
                \ref{con:irreducibility:itm:weak-large-t-or-0}\\
                \emph{weak condition at large times or $0$}
            }
            \ar@{=>}[l]
            \ar@{=>}[d]
            &
            \parbox{3cm}{
                \centering 
                \ref{con:irreducibility:itm:weak-large-t}\\
                \emph{weak condition at large times}
            }
            \ar@{=>}[l]
		\\
            \parbox{3cm}{
                \centering 
                \ref{con:irreducibility:itm:irred}\\
                \emph{irreducibility}
            }
		&
            \parbox{3.5cm}{
                \centering 
                \ref{con:irreducibility:itm:weak-arbi-t}\\
                \emph{weak condition at arbitrary times}
            }
            \ar@{=>}[l]
        &
	}
    }
\end{proposition}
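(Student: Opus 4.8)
The plan is to check the five arrows of the diagram one at a time; two of them are immediate from the definitions, one is a trivial specialisation, and the remaining two rest on a single standard fact about Banach lattices, so the argument should be short.

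I would first dispose of the easy arrows. The implication \ref{con:irreducibility:itm:weak-large-t}$\,\Rightarrow\,$\ref{con:irreducibility:itm:weak-large-t-or-0} holds simply because $[t_0,\infty)\subseteq\{0\}\cup[t_0,\infty)$, so a time $t$ that witnesses the weak condition at large times a fortiori witnesses the weak condition at large times or $0$. The implication \ref{con:irreducibility:itm:pers-irred}$\,\Rightarrow\,$\ref{con:irreducibility:itm:irred} holds because a closed invariant ideal is in particular uniformly eventually invariant (take the eventual time to be $0$), so a persistently irreducible semigroup has no closed invariant ideal other than $\{0\}$ and $E$. Finally, \ref{con:irreducibility:itm:weak-large-t-or-0}$\,\Rightarrow\,$\ref{con:irreducibility:itm:weak-arbi-t} follows by specialising \ref{con:irreducibility:itm:weak-large-t-or-0} to $t_0=0$, for which $\{0\}\cup[t_0,\infty)=[0,\infty)$.

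For the two substantial arrows I would use the following elementary observation: \emph{if $I\subseteq E$ is a closed ideal with $I\ne E$, then there exists $0\lneq\varphi\in E'$ with $\varphi\restrict I=0$.} Indeed, $E/I$ is then a non-zero Banach lattice and therefore admits a non-zero positive functional $\psi\in(E/I)'$; composing $\psi$ with the quotient map $E\to E/I$ (a lattice homomorphism, hence positive) produces the desired $\varphi$. With this in hand, \ref{con:irreducibility:itm:weak-arbi-t}$\,\Rightarrow\,$\ref{con:irreducibility:itm:irred} follows by contraposition: assuming the semigroup is not irreducible, pick a closed invariant ideal $I$ with $\{0\}\ne I\ne E$, take $\varphi$ as above and $f:=\modulus h$ for some $0\ne h\in I$, so that $0\lneq f\in I$. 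Invariance gives $e^{tA}f\in I$ for all $t\ge0$, hence $\duality{\varphi}{e^{tA}f}=0$ for every $t\in[0,\infty)$, contradicting \ref{con:irreducibility:itm:weak-arbi-t}.

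The remaining arrow \ref{con:irreducibility:itm:weak-large-t-or-0}$\,\Rightarrow\,$\ref{con:irreducibility:itm:pers-irred} is handled the same way. If the semigroup is not persistently irreducible, choose a closed ideal $I$ with $\{0\}\ne I\ne E$ that is uniformly eventually invariant, say $e^{tA}I\subseteq I$ for all $t\ge t_0$, and take $\varphi$ and $0\lneq f\in I$ as before. Then $\duality{\varphi}{e^{tA}f}=0$ both for $t\ge t_0$ (by invariance) and for $t=0$ (since $e^{0A}f=f\in I$), so $\duality{\varphi}{e^{tA}f}=0$ for every $t\in\{0\}\cup[t_0,\infty)$; this contradicts \ref{con:irreducibility:itm:weak-large-t-or-0} applied with that particular $t_0$. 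I do not expect any genuine obstacle here: the only points that need a little care are the standard construction of a positive functional vanishing on a proper closed ideal, and the observation — built into the phrasing of conditions \ref{con:irreducibility:itm:weak-large-t-or-0} and \ref{con:irreducibility:itm:weak-large-t} via the two different time sets — that in the persistent case one must also exclude the value $t=0$.
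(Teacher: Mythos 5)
Your proof is correct and follows essentially the same route as the paper: the three easy arrows are dispatched by inclusion of time sets, and the two substantive ones are proved by contraposition, pairing a non-zero positive element of the offending closed ideal with a positive functional vanishing on it. The only difference is that you spell out the quotient-space construction of that functional, which the paper takes for granted.
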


\begin{proof}
    The implications
    ``\ref{con:irreducibility:itm:pers-irred} $\Rightarrow$ \ref{con:irreducibility:itm:irred}''
    and 
    ``\ref{con:irreducibility:itm:weak-large-t} $\Rightarrow$ \ref{con:irreducibility:itm:weak-large-t-or-0}
    $\Rightarrow$ \ref{con:irreducibility:itm:weak-arbi-t}''
    are obvious.

    ``\ref{con:irreducibility:itm:weak-arbi-t} $\Rightarrow$ \ref{con:irreducibility:itm:irred}'': 
    If~\ref{con:irreducibility:itm:irred} fails, we can find a closed ideal $I$ that is neither equal to $\{0\}$ nor to $E$, but that is invariant under the semigroup. 
    As $I$ is proper and non-zero, there exists a vector $0 \lneq f \in I$ and a functional $0 \lneq \varphi \in E'$ that vanishes on $I$. 
    One has $\langle \varphi, e^{tA} f \rangle = 0$ for all $t \ge 0$, so~\ref{con:irreducibility:itm:weak-arbi-t} fails.
	
    ``\ref{con:irreducibility:itm:weak-large-t-or-0} $\Rightarrow$ \ref{con:irreducibility:itm:pers-irred}'':
    If~\ref{con:irreducibility:itm:pers-irred} fails, we can find a closed ideal $I$ that is neither equal to $\{0\}$ nor to $E$ and a time $t_0 \ge 0$ such that $e^{tA}I \subseteq I$ for all $t \in [t_0,\infty)$.
    Again, as $I$ is proper and non-zero, there is $0 \lneq f \in I$ and a functional $0 \lneq \varphi \in E'$ which vanishes on $I$. 
    Hence, we have $\langle \varphi, e^{tA} f \rangle = 0$ for all $t \in \{0\}\cup [t_0,\infty)$, which shows that~\ref{con:irreducibility:itm:weak-large-t-or-0} fails.
\end{proof}

\begin{remark}
    Proposition~\ref{prop:irreducibility-general} gives us a plethora of examples of (non-positive) semigroups that are (persistently) irreducible. Indeed, we see that persistent irreducibility is implied by \emph{individual eventual strong positivity}, i.e., by the property
    \begin{equation}
        \label{eq:eventual-strong-positivity}
        \forall f\gneq 0\ \exists t_0\ge 0 \ \forall t\ge t_0: e^{tA}f \text{ is a quasi-interior point of } E_+; 
    \end{equation}
    this follows from the fact that a vector $g \in E_+$ is a quasi-interior point of $E_+$ if and only if $\langle \varphi, g \rangle > 0$ for all $0 \lneq \varphi \in E'$, see \cite[Theorem~II.6.3]{Schaefer1974}. Therefore, all examples of eventually strongly positive semigroups in \cite[Section~6]{DanersGlueckKennedy2016b} and \cite[Section~4]{DanersGlueck2018b} are persistently irreducible. In Example~\ref{exa:coupling-irreducible-not-strong}, we give an example of a non-positive persistently irreducible semigroup that does not satisfy~\eqref{eq:eventual-strong-positivity}.
\end{remark}

For positive semigroups, the notions of irreducibility and persistent irreducibility are, in fact, equivalent:

\begin{proposition}
    \label{prop:irreducibility-positive}
    For a positive $C_0$-semigroup $(e^{tA})_{t\ge 0}$ on a Banach lattice $E$, all five of the Conditions~\ref{con:irreducibility} are equivalent.
\end{proposition}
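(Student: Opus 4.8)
The plan is to establish the chain of implications
\ref{con:irreducibility:itm:irred} $\Rightarrow$ \ref{con:irreducibility:itm:weak-arbi-t} $\Rightarrow$ \ref{con:irreducibility:itm:weak-large-t} $\Rightarrow$ (the remaining conditions), so that together with Proposition~\ref{prop:irreducibility-general} all five conditions become equivalent. Since \ref{con:irreducibility:itm:weak-large-t} $\Rightarrow$ \ref{con:irreducibility:itm:weak-large-t-or-0} $\Rightarrow$ \ref{con:irreducibility:itm:weak-arbi-t} and \ref{con:irreducibility:itm:weak-large-t-or-0} $\Rightarrow$ \ref{con:irreducibility:itm:pers-irred} $\Rightarrow$ \ref{con:irreducibility:itm:irred} are already known from Proposition~\ref{prop:irreducibility-general}, it suffices to close the loop by showing \ref{con:irreducibility:itm:irred} $\Rightarrow$ \ref{con:irreducibility:itm:weak-large-t}. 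The cheap half, \ref{con:irreducibility:itm:irred} $\Rightarrow$ \ref{con:irreducibility:itm:weak-arbi-t}, is the real engine; upgrading it to the "large times" version will use positivity together with the semigroup law.

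First I would prove \ref{con:irreducibility:itm:irred} $\Rightarrow$ \ref{con:irreducibility:itm:weak-arbi-t} by contraposition. Suppose there are $0 \lneq f \in E$ and $0 \lneq \varphi \in E'$ with $\langle \varphi, e^{tA} f \rangle = 0$ for all $t \ge 0$. Because the semigroup is positive, $e^{tA} f \ge 0$, so $\langle \varphi, e^{tA} f \rangle = 0$ already forces $\langle \psi, e^{tA} f \rangle = 0$ for every $\psi$ in the (absolute) ideal of $E'$ generated by $\varphi$ — or, more cleanly, it forces $e^{tA} f$ to lie in the null ideal $\{g \in E : \langle \varphi, |g| \rangle = 0\}$ for each $t$. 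The set
\[
    I := \{ g \in E : \langle \varphi, e^{sA} |g| \rangle = 0 \text{ for all } s \ge 0 \}
\]
is a closed ideal: it is closed by continuity of $s \mapsto e^{sA}$ and of $\varphi$; it is a sublattice and solid because $|h| \le |g|$ and positivity of $e^{sA}$ give $0 \le e^{sA}|h| \le e^{sA}|g|$, hence $\langle \varphi, e^{sA}|h|\rangle = 0$; and it is invariant because $e^{tA} g \in I$ follows from the semigroup identity $e^{sA} e^{tA} |g| \le e^{sA} e^{tA} |g|$ together with $|e^{tA} g| \le e^{tA}|g|$ (again positivity). Since $f \in I$, $I \ne \{0\}$, and since $\varphi \ne 0$ does not vanish identically, $I \ne E$; so the semigroup is reducible, contradicting \ref{con:irreducibility:itm:irred}.

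To upgrade to \ref{con:irreducibility:itm:weak-large-t}, fix $0 \lneq f \in E$, $0 \lneq \varphi \in E'$ and $t_0 \ge 0$, and assume toward a contradiction that $\langle \varphi, e^{tA} f \rangle = 0$ for all $t \ge t_0$. Apply the already-proved implication \ref{con:irreducibility:itm:irred} $\Rightarrow$ \ref{con:irreducibility:itm:weak-arbi-t} to the vector $g := e^{t_0 A} f$: since $e^{t_0 A}$ is positive, $g \ge 0$; and $g \ne 0$, for otherwise the closed ideal generated by $f$ together with $\ker e^{t_0 A}$ would be a proper invariant ideal — more directly, the set of $h$ with $e^{tA} h = 0$ for all large $t$ (equivalently $e^{t_0 A} h = 0$, using that $e^{t_0 A}$ has the same kernel structure) is a closed invariant ideal containing $f$, so irreducibility forces it to be $\{0\}$, hence $g \ne 0$. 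Then by \ref{con:irreducibility:itm:weak-arbi-t} there is $t \ge 0$ with $\langle \varphi, e^{tA} g \rangle = \langle \varphi, e^{(t+t_0)A} f \rangle \ne 0$, and $t + t_0 \ge t_0$, contradicting our assumption. The main obstacle, and the one place where care is needed, is precisely this point: verifying that $e^{t_0 A} f \ne 0$ and that the relevant auxiliary set is genuinely a closed invariant ideal; I would handle it by checking that $N := \{ h \in E : e^{sA}|h| = 0 \text{ for all } s \ge 0 \}$ (equivalently $\bigcap_{s\ge 0}\ker e^{sA}$, using solidity and positivity) is a closed invariant ideal, hence $\{0\}$ by irreducibility, which forces $e^{sA} f \ne 0$ for all $s$. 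Everything else is a routine bookkeeping of positivity and the semigroup law, so I would keep those verifications brief.
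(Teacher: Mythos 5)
Your global strategy is the same as the paper's: by Proposition~\ref{prop:irreducibility-general} it suffices to prove ``\ref{con:irreducibility:itm:irred} $\Rightarrow$ \ref{con:irreducibility:itm:weak-arbi-t}'' and ``\ref{con:irreducibility:itm:weak-arbi-t} $\Rightarrow$ \ref{con:irreducibility:itm:weak-large-t}'', and your upgrade step --- apply the weak condition at arbitrary times to the pair $(e^{t_0A}f,\varphi)$ and use the semigroup law --- is exactly the paper's. Your null-ideal argument for the first implication is the classical one (which the paper only cites from \cite{Nagel1986}) and is correct, apart from the typo ``$e^{sA}e^{tA}|g| \le e^{sA}e^{tA}|g|$'', which should read $e^{sA}|e^{tA}g| \le e^{sA}e^{tA}|g|$.

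There is, however, a genuine gap at the one point you yourself flag as delicate: the proof that $e^{t_0A}f \neq 0$. The set $N = \{h \in E: e^{sA}|h| = 0 \text{ for all } s \ge 0\}$ contains the index $s=0$, so it equals $\{0\}$ for the trivial reason that $e^{0A}$ is the identity; the conclusion $N=\{0\}$ therefore carries no information and does not force $e^{sA}f\neq 0$ for each individual $s$. (The variant ``the set of $h$ with $e^{tA}h=0$ for all large $t$'' does not help either: kernels of positive operators need not be ideals, the increasing union need not be closed, and the dichotomy coming from irreducibility would in any case only apply to its closure, which could a priori be all of $E$.) What actually has to be excluded is this: for fixed $t_0$ the absolute kernel $N_{t_0} := \{h : e^{t_0A}|h|=0\}$ \emph{is} a closed invariant ideal, but if $e^{t_0A}f=0$ for some $f\gneq 0$ then $N_{t_0}\neq\{0\}$ and irreducibility yields $N_{t_0}=E$, i.e.\ $e^{t_0A}=0$ --- the semigroup is nilpotent. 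So your argument reduces precisely to the claim that a positive irreducible semigroup on a lattice of dimension greater than one cannot be nilpotent. That is the genuinely non-trivial content and exactly where positivity is indispensable: Example~\ref{exa:rademacher} exhibits a nilpotent irreducible (eventually positive) semigroup. The paper outsources this step to \cite[Theorem~C-III.3.2(a)]{Nagel1986}, which asserts that each $e^{tA}$ is strictly positive. A self-contained repair: if the semigroup were nilpotent, set $\tau := \inf\{t : e^{tA}=0\} \in (0,\infty)$ and pick $s\in(\tau/2,\tau)$; then $e^{sA}\neq 0$ but $e^{2sA}=0$, so $e^{sA}|e^{sA}g| \le e^{2sA}|g| = 0$ for every $g$, whence the non-zero set $\Ima e^{sA}$ is contained in the proper closed invariant ideal $N_s$, contradicting irreducibility.
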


\begin{proof}
    By Proposition~\ref{prop:irreducibility-general} it suffices to prove the following two implications:
    
    ``\ref{con:irreducibility:itm:irred} $\Rightarrow$ \ref{con:irreducibility:itm:weak-arbi-t}'': 
    This implication is well-known for positive semigroups. 
    The argument can be found just after \cite[Definition~B-III-3.1]{Nagel1986} (see also \cite[Section~C-III.3]{Nagel1986}).
 
    ``\ref{con:irreducibility:itm:weak-arbi-t} $\Rightarrow$ \ref{con:irreducibility:itm:weak-large-t}'':
    Since~\ref{con:irreducibility:itm:weak-arbi-t} holds, we know from Proposition~\ref{prop:irreducibility-general} that the semigroup is also irreducible. 
    The irreducibility together with the positivity implies that each operator $e^{tA}$ is \emph{strictly positive}, meaning that $e^{tA}f \gneq 0$ whenever $f \gneq 0$; see \cite[Theorem~C-III.3.2(a)]{Nagel1986}. 
    So if $f$, $\varphi$, and $t_0$ are given as in~\ref{con:irreducibility:itm:weak-large-t}, then $e^{t_0 A}f \gneq 0$. 
	By applying~\ref{con:irreducibility:itm:weak-arbi-t} to the vectors $e^{t_0 A}f$ and $\varphi$ we find a time $t \ge 0$ such that $\langle \varphi, e^{(t + t_0)A}f \rangle \ne 0$.
\end{proof}

The situation gets more subtle for eventually positive semigroups. 
For them, the following theorem indicates that persistent irreducibility is the appropriate notion to further build the theory on, as this property can be conveniently characterized by testing against functionals.

\begin{theorem}
    \label{thm:irreducibility-eventually-positive}
    For an individually eventually positive real $C_0$-semigroup $(e^{tA})_{t\ge 0}$ on a Banach lattice $E$, the following implications between the Conditions~\ref{con:irreducibility} are true:
    
    \centerline{
	\xymatrix{
            \parbox{3cm}{
                \centering 
                \ref{con:irreducibility:itm:pers-irred}\\
                \emph{persistent irreducibility}
            }
            \ar@{=>}[d]
		&
            \parbox{3.5cm}{
                \centering 
                \ref{con:irreducibility:itm:weak-large-t-or-0}\\
                \emph{weak condition at large times or $0$}
            }
            \ar@{<=>}[l]
            \ar@{=>}[d]
            &
            \parbox{3cm}{
                \centering 
                \ref{con:irreducibility:itm:weak-large-t}\\
                \emph{weak condition at large times}
            }
            \ar@{<=>}[l]
		\\
            \parbox{3cm}{
                \centering 
                \ref{con:irreducibility:itm:irred}\\
                \emph{irreducibility}
            }
		&
            \parbox{3.5cm}{
                \centering 
                \ref{con:irreducibility:itm:weak-arbi-t}\\
                \emph{weak condition at arbitrary times}
            }
            \ar@{=>}[l]
        &
	}
    }
\end{theorem}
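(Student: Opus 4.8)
The plan is to compare this diagram with the one in Proposition~\ref{prop:irreducibility-general}, which already gives us all the indicated single-headed implications in the general case. Thus, what remains is to upgrade three of those implications to equivalences, namely to prove the reverse arrows ``\ref{con:irreducibility:itm:pers-irred} $\Rightarrow$ \ref{con:irreducibility:itm:weak-large-t-or-0}'', ``\ref{con:irreducibility:itm:weak-large-t-or-0} $\Rightarrow$ \ref{con:irreducibility:itm:weak-large-t}'', and the fact that ``\ref{con:irreducibility:itm:weak-large-t}'' follows from the persistent irreducibility block. It is convenient to prove the chain ``\ref{con:irreducibility:itm:pers-irred} $\Rightarrow$ \ref{con:irreducibility:itm:weak-large-t}'': once this is established, then together with Proposition~\ref{prop:irreducibility-general} (which gives \ref{con:irreducibility:itm:weak-large-t} $\Rightarrow$ \ref{con:irreducibility:itm:weak-large-t-or-0} $\Rightarrow$ \ref{con:irreducibility:itm:pers-irred}) all three nodes in the top row become equivalent, and the diagram is complete.

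So the heart of the matter is: assuming persistent irreducibility, show the weak condition at large times. I would argue by contraposition. Suppose the weak condition at large times fails; then there exist $0 \lneq f \in E$, $0 \lneq \varphi \in E'$, and $t_0 \ge 0$ such that $\langle \varphi, e^{tA} f \rangle = 0$ for \emph{all} $t \ge t_0$. The goal is to manufacture a non-trivial closed ideal that is uniformly (equivalently, individually) eventually invariant, contradicting persistent irreducibility. By individual eventual positivity, after enlarging $t_0$ if necessary we may assume $e^{tA} f \ge 0$ for all $t \ge t_0$; replacing $f$ by $g := e^{t_0 A} f$ (which is still $\gneq 0$, since otherwise $\langle\varphi, e^{tA}g\rangle$ would be trivially controlled and, more importantly, we need $g \ne 0$ — this requires a small separate argument, e.g. if $e^{t_0A}f=0$ then the semigroup is nilpotent on the principal ideal generated by $f$ and one builds the invariant ideal directly) we may even assume $0 \lneq f$ itself satisfies $e^{tA} f \ge 0$ for all $t \ge 0$ and $\langle \varphi, e^{tA} f \rangle = 0$ for all $t \ge 0$. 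Now consider the principal ideal $E_f$, or rather the closed ideal $I := \overline{\bigcup_{t \ge 0} E_{e^{tA} f}}$ generated by the forward orbit of $f$; the positivity of the vectors $e^{tA} f$ (for large times) is exactly what lets the operators $e^{sA}$ map principal ideals $E_{e^{tA}f}$ into $E_{e^{(t+s)A}f}$, hence into $I$, at least for $s, t$ large. One then checks $\varphi$ vanishes on all of $I$ (so $I \ne E$) and $f \in I$ (so $I \ne \{0\}$), and that $I$ is individually eventually invariant.

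The main obstacle — and the reason the theorem is nontrivial and the machinery of Section~\ref{sec:eve-invarance} is needed — is precisely the passage from the \emph{individually} eventually invariant non-closed ideal $\bigcup_{t} E_{e^{tA}f}$ (which is where the orbit-positivity argument naturally lives, as flagged in the excerpt's remark about the proof of Lemma~\ref{lem:sub-eigenvector}) to a \emph{closed} ideal that is still eventually invariant. As the excerpt emphasizes, individual eventual invariance does \emph{not} pass to the closure in general, so one cannot simply close up. The resolution is to note that $\bigcup_t E_{e^{tA}f}$, or a suitable sub-ideal, is an \emph{operator range} — it carries a natural complete norm via the gauge norms $\norm{\cdot}_{e^{tA}f}$ assembled as in Proposition~\ref{prop:intersection-of-operator-ranges} — and then invoke Corollary~\ref{cor:ind-implies-unif-for-certain-operator-ranges}: since $[0,\infty)$ has the countable majorizing set $\bbN$, a \emph{quantified} individual eventual invariance of this operator range upgrades to uniform eventual invariance. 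Uniform eventual invariance \emph{does} pass to the closure (as remarked after Corollary~\ref{cor:ind-implies-unif-for-closed-subspaces}), giving a closed ideal, uniformly eventually invariant, with $\{0\} \ne I \ne E$ — contradicting \ref{con:irreducibility:itm:pers-irred}. The delicate bookkeeping will be to choose the operator range and the norm-scaling constants $c_i$ so that the quantified bound $\norm{e^{sA} v}_V \le c_s \norm{v}_V$ holds for $s$ large, using boundedness of the semigroup on compact time intervals together with the estimate $\norm{e^{sA}}_{E_{e^{tA}f} \to E_{e^{(t+s)A}f}} \le 1$ coming from positivity of the relevant orbit vectors; I expect this to be the only place real work is required, the rest being the soft implications already recorded in Proposition~\ref{prop:irreducibility-general}.
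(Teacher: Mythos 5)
Your overall architecture matches the paper's: reduce to the single implication ``\ref{con:irreducibility:itm:pers-irred} $\Rightarrow$ \ref{con:irreducibility:itm:weak-large-t}'' via Proposition~\ref{prop:irreducibility-general}, argue by contraposition, split off the degenerate case $e^{t_1A}f=0$ (which the paper handles exactly as you sketch, via Proposition~\ref{prop:ind-implies-unif-for-closed-subspaces-absorbing} with $W=\{0\}$ plus a word about nilpotency), and in the main case manufacture a non-trivial closed, uniformly eventually invariant ideal by feeding an operator range into Corollary~\ref{cor:ind-implies-unif-for-certain-operator-ranges}. However, the specific ideal you propose does not work as described, and this is precisely where the paper's one genuinely new idea lives. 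First, $\bigcup_{t}E_{e^{tA}f}$ is not a vector subspace: the principal ideals $E_{e^{tA}f}$ for different $t$ are in general neither nested nor directed, so you would have to pass to the generated ideal, whose operator-range structure is unclear. Second, Proposition~\ref{prop:intersection-of-operator-ranges}, which you invoke to ``assemble'' the gauge norms, concerns \emph{intersections} of operator ranges; it gives no help with unions or sums. Third, even granting an operator-range structure, the quantified bound demanded by Corollary~\ref{cor:ind-implies-unif-for-certain-operator-ranges} is out of reach for your moving family: the estimate $\norm{e^{sA}g}_{e^{(s+t)A}f}\le\norm{g}_{e^{tA}f}$ holds only for $s$ large \emph{depending on} $g$, and the target ideal changes with $s$, so there is no single fixed norm $\norm{\argument}_V$ with respect to which one could write $\norm{e^{sA}v}_V\le c_s\norm{v}_V$.

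The paper resolves all three problems at once by first rescaling so that $\gbd(A)<0$ and then setting $h:=\int_{t_1}^{\infty}e^{tA}f\dx t\gneq 0$. This single vector satisfies $e^{tA}h\le h$ for all $t\ge 0$, so one works with the \emph{single} principal ideal $E_h$, which is an operator range under its gauge norm; the sub-invariance $e^{tA}h\le h$ combined with individual eventual positivity yields $\norm{e^{tA}g}_h\le 2\norm{g}_h$ for all sufficiently large $t$ (depending on $g$), which is exactly the quantified hypothesis of Corollary~\ref{cor:ind-implies-unif-for-certain-operator-ranges} with constants $c_t=2$ -- this is Lemma~\ref{lem:sub-eigenvector}. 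Finally, $\langle\varphi,h\rangle=0$ together with the positivity of $\varphi$ and of $h$ forces $\varphi$ to vanish on $\overline{E_h}$, so $\overline{E_h}\ne E$, while $h\ne 0$ gives $\overline{E_h}\ne\{0\}$. Without this integration step (or some substitute producing a single positive vector that dominates its own forward orbit), your argument has a genuine gap at its central construction.
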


The difference to the situation without any eventual positivity assumption (Proposition~\ref{prop:irreducibility-general}) is that one now has equivalences throughout the first row of the diagram.
In light of the situation for positive semigroups (Proposition~\ref{prop:irreducibility-positive}) one may ask whether Theorem~\ref{thm:irreducibility-eventually-positive} can be improved to get an equivalence between all five conditions, say at least for uniformly eventually positive semigroups. A negative answer is given in Example~\ref{exa:rademacher}. However, the situation improves significantly if the semigroup is, in addition, analytic (Proposition~\ref{prop:irred-analytic}).

For the proof of Theorem~\ref{thm:irreducibility-eventually-positive}, we need the following sufficient condition for a principal ideal in a Banach lattice to be uniformly eventually invariant under a semigroup. 
Getting uniform (rather than only individual) eventual invariance in the lemma is a bit subtle, as we merely assume the semigroup to be individually eventually positive.
This is where our preparations on eventually invariant operator ranges from Section~\ref{sec:eve-invarance} enter the game.

\begin{lemma}
    \label{lem:sub-eigenvector}
    Let $E$ be a Banach lattice and let $(e^{tA})_{t \ge 0}$ be an individually eventually positive real $C_0$-semigroup on $E$.
	
    Let $0 \le h \in E$ and assume that there exists a time $t_0 \ge 0$ such that $e^{tA} h \le h$ for all $t \ge t_0$.
    Then the principal ideal $E_h$ (and, in turn, its closure) is uniformly eventually invariant under $(e^{tA})_{t \ge 0}$.
\end{lemma}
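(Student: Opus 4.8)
I want to show that $E_h$ is uniformly eventually invariant. The natural strategy is to first establish \emph{individual} eventual invariance of $E_h$ — with quantitative control on the gauge norm — and then invoke Corollary~\ref{cor:ind-implies-unif-for-certain-operator-ranges} (noting that $E_h$ is an operator range, being complete under $\norm{\argument}_h$, and that $[0,\infty)$ contains the countable majorizing set $\bbN$). So the heart of the matter is the individual, quantified statement: there exist constants $c_t \ge 0$ such that for each $f \in E_h$ there is a time $s_0$ with $e^{tA}f \in E_h$ and $\norm{e^{tA}f}_h \le c_t \norm{f}_h$ for all $t \ge s_0$.

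First I would reduce to positive $f$. Since $E_h$ is a lattice and $f = f^+ - f^-$ with $\norm{f^\pm}_h \le \norm{f}_h$, and since the semigroup is real (so $e^{tA}$ commutes with taking real/imaginary parts, reducing the complex case to the real case), it suffices to treat $0 \le f \in E_h$; a vector $0 \le f \in E_h$ satisfies $f \le \norm{f}_h \, h$. Now fix such an $f$. By individual eventual positivity applied to $h - f \ge 0$ (note $\norm{f}_h h - f \ge 0$ as well, but working with $h-f$ when $\norm{f}_h \le 1$, or rescaling), there is a time $t_1$ with $e^{tA}(\norm{f}_h h - f) \ge 0$ for all $t \ge t_1$; combined with the hypothesis $e^{tA}h \le h$ for $t \ge t_0$, we get, for $t \ge \max\{t_0, t_1\}$,
\[
    0 \le e^{tA} f \le \norm{f}_h \, e^{tA} h \le \norm{f}_h \, h,
\]
which says precisely $e^{tA}f \in E_h$ with $\norm{e^{tA}f}_h \le \norm{f}_h$. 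Here I also need individual eventual positivity of $e^{tA}f$ itself to get the lower bound $e^{tA}f \ge 0$, which again is covered by the assumption applied to $f$. This gives the quantified individual eventual invariance with the uniform choice $c_t = 1$ for all $t$.

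With the quantified individual eventual invariance in hand, I would apply Corollary~\ref{cor:ind-implies-unif-for-certain-operator-ranges} to the net $(e^{tA})_{t \ge 0}$ and the operator range $V = E_h$ (equipped with the complete gauge norm $\norm{\argument}_h$, whose embedding into $E$ is continuous, as recalled in the notation section) to conclude that $E_h$ is uniformly eventually invariant. Finally, uniform eventual invariance passes to the closure $\overline{E_h}$, as remarked just after Corollary~\ref{cor:ind-implies-unif-for-closed-subspaces}.

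**Main obstacle.** The delicate point is obtaining a \emph{single} threshold time that works simultaneously for the lower bound $e^{tA}f \ge 0$ and the upper bound $e^{tA}f \le \norm{f}_h \, h$ — i.e., making sure the two invocations of individual eventual positivity (once for $f$, once for $\norm{f}_h h - f$, together with the time $t_0$ from the hypothesis) can be merged. Since each individually gives a threshold and there are only finitely many, their maximum does the job for each fixed $f$; the genuine subtlety — that the threshold can be chosen independently of $f$ — is exactly what is outsourced to the operator-range machinery of Section~\ref{sec:eve-invarance}, so the only thing I must be careful about is verifying the \emph{quantitative} bound $\norm{e^{tA}f}_h \le c_t\norm{f}_h$ (with $c_t$ not depending on $f$) that hypothesis~\ref{cor:ind-implies-unif-for-certain-operator-ranges:itm:individual} of that corollary demands, which the computation above delivers with $c_t \equiv 1$.
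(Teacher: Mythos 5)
Your proposal is correct and follows essentially the same route as the paper's proof: both establish a quantified individual eventual invariance of $E_h$ by applying individual eventual positivity to $h\pm f$ (in your case, to $f$ and $\norm{f}_h h - f$ after reducing to positive $f$) together with the hypothesis $e^{tA}h \le h$, and then invoke Corollary~\ref{cor:ind-implies-unif-for-certain-operator-ranges}. The only cosmetic difference is that the paper works directly with real vectors in the order interval $[-h,h]$ instead of splitting into positive and negative parts, which changes the resulting constant ($1$ or $2$ instead of your claimed $c_t\equiv 1$ once the decomposition is accounted for) — immaterial, since the corollary accepts any $f$-independent constants.
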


\begin{proof}
    We first consider a real vector $f$ in the order interval $[-h, h]$. 
    Due to the individual eventual positivity of the semigroup there exists a time $t_1 \ge t_0$ 
    such that $e^{tA}(h-f) \ge 0$ and $e^{tA}(h+f) \ge 0$ for all $t \ge t_1$. 
    Thus, for all $t \ge t_1 \ge t_0$, the vector $e^{tA}f$ is real and satisfies
    \begin{align*}
	\pm e^{tA}f \le e^{tA}h \le h,
    \end{align*}
    so $e^{tA}f \in [-h,h]$. 
    This proves that the order interval $[-h,h]$ is individually eventually invariant under the semigroup. 
    As $[-h,h]$ spans $E_h$ (over the underlying scalar field), 
    it follows that $E_h$ is individually eventually invariant under the semigroup. 
    
    To show that $E_h$ is even uniformly eventually invariant, we will now employ Corollary~\ref{cor:ind-implies-unif-for-certain-operator-ranges}.	
    If the underlying scalar field is $\bbR$, the preceding argument shows that, for each $f \in E_h$, 
    we have $\norm{e^{tA} f}_h \le \norm{f}_h$ for all sufficiently large times $t$. 
    If the underlying scalar field is $\bbC$ and $f \in E_h$, say with gauge norm $\norm{f}_h \le 1$, 
    we can write $f$ as $f = f_1 + i f_2$ for real vectors $f_1,f_2 \in [-h,h]$ and hence, 
    $e^{tA}f = e^{tA}f_1 + i e^{tA}f_2$ has modulus at most $2h$ for all sufficiently large $t$.
    This shows that, for each $f \in E_h$, one has  $\norm{e^{tA}f}_h \le 2\norm{f}_h$ for all sufficiently large times $t$. 
    In both cases, Corollary~\ref{cor:ind-implies-unif-for-certain-operator-ranges} can be applied and gives the uniform eventual invariance of $E_h$.
\end{proof}

\begin{proof}[Proof of Theorem~\ref{thm:irreducibility-eventually-positive}]
    Due to Proposition~\ref{prop:irreducibility-general}, only one implication is left to prove, namely
    ``\ref{con:irreducibility:itm:pers-irred} $\Rightarrow$ \ref{con:irreducibility:itm:weak-large-t}''.
    Without loss of generality, assume that the growth bound $\gbd(A)$ of the semigroup satisfies $\gbd(A) < 0$.
	
    Assume that~\ref{con:irreducibility:itm:weak-large-t} fails, i.e., we can find $0 \lneq f \in E$, $0 \lneq \varphi \in E'$ and $t_0 \in [0,\infty)$ such that $\langle \varphi, e^{tA}f \rangle = 0$ for all $t \in  [t_0,\infty)$.  
    Due to the individual eventual positivity of the semigroup, we can find a time $t_1 \ge t_0$ such that $e^{tA}f \ge 0$ for each $t \ge t_1$. 
    We distinguish two cases:
	
    \emph{Case 1:} $e^{t_1A}f = 0$.
    It then follows from the individual eventual positivity that the orbit of every $g \in E_f$ under the semigroup eventually vanishes. 
    By applying Proposition~\ref{prop:ind-implies-unif-for-closed-subspaces-absorbing} to the family of operators $\big(e^{tA}\restrict{E_f}\big)_{t\ge 0}$ and to $W=\{0\}$, we conclude that, for all sufficiently large $t$,
    $e^{tA}$ vanishes on $E_f$ and hence on $\overline{E_f}$.
    
    In particular, the closed ideal $\overline{E_f}$ is uniformly eventually invariant. Thus, if $\overline{E_f} \not= E$, then~\ref{con:irreducibility:itm:pers-irred} fails (as $f$ is non-zero). 
    
    On the other hand, if $\overline{E_f} = E$, then the semigroup is nilpotent. 
    But since~\ref{con:irreducibility:itm:weak-large-t} is not true, one has $\dim E>1$. 
    Together with the nilpotency, this shows that the semigroup cannot be persistently irreducible, 
    i.e.,~\ref{con:irreducibility:itm:pers-irred} fails.
	
    \emph{Case 2:} $e^{t_1A}f \not= 0$.
    By the continuity of the semigroup orbit of $f$ and the inequality $e^{tA}f \ge 0$ for $t \ge t_1$, it follows (by testing against positive functionals) that
    \begin{align*}
	h := \int_{t_1}^\infty e^{tA} f \dx t \gneq 0;
    \end{align*}
    the convergence of the integral is guaranteed as we assumed $\omega_0(A) < 0$. 
    By using again that $e^{tA}f \ge 0$ for all $t \ge t_1$, one readily checks that $e^{tA}h \le h$ for all $t \ge 0$.
    So, according to Lemma~\ref{lem:sub-eigenvector}, the closure of the ideal $E_h$ is uniformly eventually invariant under the semigroup. 
    This closed ideal is non-zero since it contains that non-zero vector $h$. 
    Moreover, we have $\langle \varphi, h \rangle = 0$, so $\varphi$ vanishes on $\overline{E_h}$, which shows that $\overline{E_h} \not= E$.
    Whence, the semigroup is not persistently irreducible, i.e.,~\ref{con:irreducibility:itm:pers-irred} fails.
\end{proof}

As a consequence of Theorem~\ref{thm:irreducibility-eventually-positive}, we obtain the following eventual \emph{strict positivity} result for eventually positive persistently irreducible semigroups:

\begin{corollary}
    \label{cor:persistently-irred-non-zero}
    Let $E$ be a Banach lattice and assume that $(e^{tA})_{t \ge 0}$ is a real $C_0$-semigroup on $E$ that is individually eventually positive and persistently irreducible.

    For all $0 \lneq f \in E$ and $0 \lneq \varphi \in E'$ and all times $t \ge 0$ one has $e^{tA}f \not= 0$ and $(e^{tA})' \varphi \not= 0$.
\end{corollary}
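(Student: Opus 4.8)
The plan is to derive both assertions from Theorem~\ref{thm:irreducibility-eventually-positive} by contraposition, exploiting that a zero of an orbit (or co-orbit) propagates forward in time under the semigroup law. Since $(e^{tA})_{t\ge 0}$ is persistently irreducible, that theorem gives Condition~\ref{con:irreducibility:itm:weak-large-t}: for every $0\lneq f\in E$, every $0\lneq\varphi\in E'$, and every $t_0\ge 0$ there exists $t\ge t_0$ with $\langle\varphi,e^{tA}f\rangle\neq 0$. Everything then reduces to the observation that a single vanishing at a time $s$ would make this condition fail with $t_0=s$.

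For $e^{tA}f\neq 0$: the case $t=0$ is immediate since $f\gneq 0$. Otherwise, suppose $e^{sA}f=0$ for some $s>0$; then $e^{tA}f=e^{(t-s)A}e^{sA}f=0$ for all $t\ge s$. As $f\gneq 0$ forces $E\neq\{0\}$, we may pick $0\lneq\varphi\in E'$ (for a Banach lattice one can even take $\varphi$ with $\langle\varphi,f\rangle=\norm{f}>0$). Then $\langle\varphi,e^{tA}f\rangle=0$ for all $t\in[s,\infty)$, contradicting Condition~\ref{con:irreducibility:itm:weak-large-t} applied with $t_0=s$.

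For $(e^{tA})'\varphi\neq 0$: the case $t=0$ is immediate since $\varphi\gneq 0$. Otherwise, suppose $(e^{sA})'\varphi=0$ for some $s>0$; the identity $(e^{tA})'=(e^{(t-s)A})'(e^{sA})'$ valid for $t\ge s$ gives $(e^{tA})'\varphi=0$ for all $t\ge s$. Picking any $0\lneq f\in E$ (possible since $\varphi\gneq 0$ forces $E\neq\{0\}$), we obtain $\langle\varphi,e^{tA}f\rangle=\langle(e^{tA})'\varphi,f\rangle=0$ for all $t\ge s$, again in conflict with Condition~\ref{con:irreducibility:itm:weak-large-t} for $t_0=s$.

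I do not expect any genuine obstacle here: the substantive content is entirely carried by Theorem~\ref{thm:irreducibility-eventually-positive}, and the remaining ingredients — the semigroup law and the existence of a nonzero positive functional (resp.\ vector) on a nontrivial Banach lattice — are routine. The only point worth stating explicitly is why it suffices to exhibit \emph{some} nonzero positive $\varphi$ (resp.\ $f$), namely that Condition~\ref{con:irreducibility:itm:weak-large-t} is then violated for that particular pair.
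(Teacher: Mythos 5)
Your proposal is correct and follows essentially the same route as the paper: both deduce Condition~\ref{con:irreducibility}\ref{con:irreducibility:itm:weak-large-t} from Theorem~\ref{thm:irreducibility-eventually-positive} and then observe that a single vanishing of $e^{sA}f$ or $(e^{sA})'\varphi$ propagates to all $t\ge s$ by the semigroup law, contradicting that condition with $t_0=s$. The only cosmetic difference is that the paper fixes the pair $(f,\varphi)$ from the statement at the outset, while you explicitly note that any nonzero positive functional (resp.\ vector) suffices as a test partner; both are fine.
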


\begin{proof}
    Due to Theorem~\ref{thm:irreducibility-eventually-positive} the semigroup satisfies Condition~\ref{con:irreducibility}\ref{con:irreducibility:itm:weak-large-t}.

    Let $0 \lneq f \in E$ and $0 \lneq \varphi \in E'$. Suppose there exists $t_0>0$ such that $e^{t_0 A}f=0$ or $(e^{t_0A})' \varphi = 0$. Then $e^{tA}f=0$ or $(e^{tA})' \varphi = 0$ for all $t\ge t_0$. In either case, $\duality{\varphi}{e^{tA}f}=0$ for all $t\ge t_0$, which contradicts Condition~\ref{con:irreducibility}\ref{con:irreducibility:itm:weak-large-t}.
\end{proof}

\begin{remark}
    Using Corollary~\ref{cor:persistently-irred-non-zero}, we are able to obtain the following analogue of \cite[Theorem~C-III.3.2(a)]{Nagel1986}: 
    
    If $(e^{tA})_{t \ge 0}$ is a $C_0$-semigroup on $E$ that is uniformly eventually positive and persistently irreducible, then there exists a time $t_0\ge 0$ such that $e^{tA}$ is a strictly positive operator for all $t\ge t_0$, meaning that $e^{tA}f\gneq 0$ for all $0\lneq f\in E$ and all $t\ge t_0$.
\end{remark}

There exist uniformly eventually positive $C_0$-semigroups 
which are irreducible but not persistently irreducible; 
here is a concrete example.

\begin{example}
    \label{exa:rademacher}
    \emph{A uniformly eventually positive semigroup on $\ell^2$ that is irreducible but not persistently irreducible.}
	
	Let $(r_n)$ be the orthonormal basis of the Hilbert space $L^2(0,1)$ that consists of the Rademacher functions and let $U:L^2(0,1)\to \ell^2$ be the unitary operator that maps each function to its coefficients with respect to the basis $(r_n)$.
    Let $(e^{tB})_{t\ge 0}$ denote the left shift semigroup on $L^2(0,1)$ (that is nilpotent).
    We show that the semigroup on $\ell^2$ given by $e^{tA}=Ue^{tB}U^{-1}$ for each $t \ge 0$ is irreducible. 
    However, the semigroup is clearly not persistently irreducible as it is nilpotent.
    
    In order to show that $(e^{tA})_{t\ge 0}$ is irreducible, let $I$ be a non-zero closed ideal of $\ell^2$ that is invariant under the semigroup $(e^{tA})_{t\ge 0}$. Then there exists $k\in\bbN$ such that $e_k\in I$; here $(e_n)$ denotes the standard orthonormal basis of $\ell^2$. 
    For every index $j\ne k$ we have
	\begin{align*}
	    \duality{e^{tA}e_k}{e_j} 
	    &= \duality{e^{tB}r_k}{r_j};
	\end{align*}
	and the term on the right is non-zero for some $t\in [0,1]$ (for instance, for all $t<1$ that are sufficiently close to $1$). 
    So for this time $t$ the vector $\modulus{e^{tA}e_k}$ dominates a non-zero multiple of $e_j$. 
    As $\modulus{e^{tA}e_k}$ is an $I$, so is $e_j$. 
    Thus, $I=\ell^2$.
\end{example}

While the above counterexample shows that Condition~\ref{con:irreducibility}\ref{con:irreducibility:itm:irred} does not imply~\ref{con:irreducibility:itm:pers-irred} even in the case of uniformly eventually positive semigroups, we do not know whether the semigroup in the example satisfies~\ref{con:irreducibility:itm:weak-arbi-t}. 
So it remains open whether any of the implications ``\ref{con:irreducibility:itm:irred} $\Rightarrow$ \ref{con:irreducibility:itm:weak-arbi-t}'' or ``\ref{con:irreducibility:itm:weak-arbi-t} $\Rightarrow$ \ref{con:irreducibility:itm:pers-irred}'' is true for (individually or uniformly) eventually positive semigroups 
(but note that they cannot both be true, as Example~\ref{exa:rademacher} shows).

Finally, let us briefly consider the case of analytic semigroups.  
This case is simpler since a phenomenon as in Example~\ref{exa:rademacher} cannot occur:

\begin{proposition}
    \label{prop:irred-analytic}
    Let $E$ be complex Banach lattice and assume that $(e^{tA})_{t \ge 0}$ is an individually eventually positive $C_0$-semigroup on $E$. 

    If the semigroup is analytic, then all five Conditions~\ref{con:irreducibility} are equivalent.
\end{proposition}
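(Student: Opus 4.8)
The plan is to reduce everything to Theorem~\ref{thm:irreducibility-eventually-positive}. First I would record that the semigroup here is automatically real: for $0\le f\in E$ and a functional $\psi\in E'$ that is real on $E_\bbR$, the scalar function $t\mapsto\langle\psi,e^{tA}f\rangle$ extends holomorphically to a sector $\{z\neq0:\lvert\arg z\rvert<\delta\}$ and is real-valued on some half-line $[t_1,\infty)$, hence — by the Schwarz reflection principle applied to $z\mapsto\overline{\langle\psi,e^{\bar z A}f\rangle}$ — real on all of $(0,\infty)$; since real functionals separate $E_\bbR$ from $iE_\bbR$ and $E_+$ spans $E_\bbR$, this forces each $e^{tA}$ to be real. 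Thus Theorem~\ref{thm:irreducibility-eventually-positive} applies and tells us that Conditions~\ref{con:irreducibility:itm:pers-irred}, \ref{con:irreducibility:itm:weak-large-t-or-0} and \ref{con:irreducibility:itm:weak-large-t} are pairwise equivalent, that each of them implies~\ref{con:irreducibility:itm:irred}, that~\ref{con:irreducibility:itm:weak-large-t-or-0} implies~\ref{con:irreducibility:itm:weak-arbi-t}, and that~\ref{con:irreducibility:itm:weak-arbi-t} implies~\ref{con:irreducibility:itm:irred}. Hence all five Conditions are equivalent as soon as I establish the single missing implication ``\ref{con:irreducibility:itm:irred} $\Rightarrow$ \ref{con:irreducibility:itm:pers-irred}''.

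The key step — and the only place where analyticity is really used — is the following, which needs no positivity: \emph{a closed subspace $V\subseteq E$ that is uniformly eventually invariant under $(e^{tA})_{t\ge0}$ is already invariant}. To prove it I would pick $s_0$ with $e^{sA}V\subseteq V$ for all $s\ge s_0$, fix $g\in V$ and an arbitrary functional $\psi\in E'$ that vanishes on $V$, and consider $t\mapsto\langle\psi,e^{tA}g\rangle$. By analyticity this extends to a holomorphic function on a sector around $(0,\infty)$; it vanishes on the half-line $[s_0,\infty)$, which has accumulation points in the open sector, so the identity theorem forces it to vanish on all of $(0,\infty)$. As $\psi$ ranges over the annihilator of the closed subspace $V$, the Hahn--Banach theorem then yields $e^{tA}g\in V$ for all $t>0$; the inclusion being trivial at $t=0$, $V$ is invariant.

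With this in hand the missing implication is immediate: if the semigroup were irreducible but not persistently irreducible, there would be a closed ideal $I$ with $\{0\}\neq I\neq E$ that is uniformly eventually invariant under the semigroup, and by the step above $I$ would be invariant, contradicting irreducibility. Combining this with the implications quoted from Theorem~\ref{thm:irreducibility-eventually-positive} shows that Conditions~\ref{con:irreducibility:itm:irred}--\ref{con:irreducibility:itm:weak-large-t} are all equivalent, which is the assertion.

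I do not anticipate a serious obstacle: given Theorem~\ref{thm:irreducibility-eventually-positive}, the argument is very short. The only points demanding a little care are the justification that the reflection/identity arguments apply — i.e.\ that ``analytic'' precisely means $t\mapsto e^{tA}x$ extends to a holomorphic $E$-valued map on a sector, so that vanishing (or being real) on a half-line propagates to all positive times — and the Hahn--Banach passage from ``$\langle\psi,e^{tA}g\rangle=0$ for every annihilating $\psi$'' to ``$e^{tA}g\in V$'', which rests on $V$ being closed. This also clarifies why the phenomenon of Example~\ref{exa:rademacher} — irreducible but not persistently irreducible — is impossible for analytic semigroups.
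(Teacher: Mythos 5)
Your proof is correct and follows essentially the same route as the paper: the paper's entire proof is the one-line observation that the missing implication ``\ref{con:irreducibility:itm:irred} $\Rightarrow$ \ref{con:irreducibility:itm:pers-irred}'' follows from the identity theorem for analytic functions, which is exactly the argument you spell out (eventual invariance of a closed ideal propagates back to all $t>0$). Your preliminary reflection argument showing the semigroup is automatically real is a welcome extra detail, since Theorem~\ref{thm:irreducibility-eventually-positive} is stated for real semigroups while the proposition does not assume reality explicitly.
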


\begin{proof}
    The remaining implication ``\ref{con:irreducibility:itm:irred}~$\Rightarrow$~\ref{con:irreducibility:itm:pers-irred}'' in Theorem~\ref{thm:irreducibility-eventually-positive} follows from the identity theorem for analytic functions.
\end{proof}

For positive semigroups, irreducibility together with analyticity implies a stronger version of positivity, namely that the semigroup operators map all vectors $f \gneq 0$ to quasi-interior points \cite[Theorem~C-III-3.2(b) on p.\,306]{Nagel1986}.  
In the following proposition, we slightly modify the argument to show that the same remains true if the semigroup is only uniformly eventually positive rather than positive. 
We do not know whether individual eventual positivity suffices for the same conclusion.

\begin{proposition}
    \label{prop:irred-analytic-strongly}
    Let $E$ be a complex Banach lattice and assume that $(e^{tA})_{t \ge 0}$ is a uniformly eventually positive analytic $C_0$-semigroup on $E$ and choose $t_0 \in [0,\infty)$ such that $e^{tA} \ge 0$ for all $t \ge t_0$. 

    If  $(e^{tA})_{t \ge 0}$ is (persistently) irreducible, then for every $0\lneq f\in E$ and all $t > 2 t_0$ the vector $e^{tA}f$ is a quasi-interior point of $E_+$.
\end{proposition}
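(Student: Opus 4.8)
The plan is to adapt the proof of the analogous statement for positive semigroups, \cite[Theorem~C-III-3.2(b)]{Nagel1986}, and to reduce it to a statement about a scalar analytic function. Since the semigroup is analytic and (in particular individually) eventually positive, Proposition~\ref{prop:irred-analytic} shows that all five Conditions~\ref{con:irreducibility} hold; in particular the semigroup is persistently irreducible and satisfies Condition~\ref{con:irreducibility}\ref{con:irreducibility:itm:weak-large-t}. Moreover the semigroup is real (by analyticity, positivity of $e^{tA}$ for $t\ge t_0$ forces $e^{tA}$ to be real for all $t>0$), so Corollary~\ref{cor:persistently-irred-non-zero} applies and, together with the positivity of $e^{rA}$ for $r\ge t_0$, yields that $e^{rA}$ and its adjoint $(e^{rA})'$ are strictly positive for every $r\ge t_0$. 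By \cite[Theorem~II.6.3]{Schaefer1974}, $g\in E_+$ is a quasi-interior point if and only if $\langle\varphi,g\rangle>0$ for all $0\lneq\varphi\in E'$; as $e^{tA}f\ge 0$ for $t>2t_0$, it therefore suffices to prove $\langle\varphi,e^{tA}f\rangle\ne 0$ for all $0\lneq f\in E$, $0\lneq\varphi\in E'$ and $t>2t_0$. (As usual we may assume $\omega_0(A)<0$ after passing from $A$ to $A-\lambda$.)

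Assume for contradiction that $\langle\varphi,e^{tA}f\rangle=0$ for some such $f,\varphi$ and some $t>2t_0$. Since $t/2>t_0$, the vectors $g:=e^{(t/2)A}f$ and $\psi:=(e^{(t/2)A})'\varphi$ satisfy $g\gneq 0$, $\psi\gneq 0$ and $\langle\psi,g\rangle=0$, and for every $s>-t/2$ one has $e^{sA}g=e^{(s+t/2)A}f$; as $t>2t_0$ gives $t_0-t<-t/2$, we even get $e^{sA}g\ge 0$ for all $s>-t/2$. By analyticity of the semigroup the function
\[
    \Phi(s):=\langle\psi,e^{sA}g\rangle=\langle\varphi,e^{(t+s)A}f\rangle
\]
is real-analytic on $(-t/2,\infty)$, non-negative there, and $\Phi(0)=0$. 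Thus $0$ is an interior minimum of $\Phi$, so either $\Phi$ vanishes on a neighbourhood of $0$ or $\Phi$ has a zero of some finite even order $2m\ge 2$ at $0$.

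The heart of the matter is to exclude the finite-order case; this is where analyticity genuinely enters, exactly as in the positive case. If $\Phi$ had a zero of order $2m$ at $0$ then, since $g\in\bigcap_n\dom{A^n}$ and $s\mapsto e^{sA}g$ is given near $s=0$ by its convergent Taylor series $\sum_n\frac{s^n}{n!}A^n g$, one would have $\langle\psi,A^kg\rangle=0$ for $k<2m$ but $\langle\psi,A^{2m}g\rangle>0$; one must then derive a contradiction from the persistent irreducibility together with the positivity of all vectors $e^{sA}g$, $s\ge 0$ — the natural vehicle being the closed ideal generated by the orbit $\{e^{sA}g:s\ge 0\}$, which is uniformly eventually invariant by the argument underlying Lemma~\ref{lem:sub-eigenvector} and Corollary~\ref{cor:ind-implies-unif-for-certain-operator-ranges}, played off against $\psi$. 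Once the finite-order case is ruled out, $\Phi\equiv 0$ on $(-t/2,\infty)$, so $\langle\varphi,e^{\tau A}f\rangle=0$ for all $\tau>t/2$, hence by the identity theorem for $\tau\mapsto\langle\varphi,e^{\tau A}f\rangle$ on $(0,\infty)$ for all $\tau>0$; this contradicts Condition~\ref{con:irreducibility}\ref{con:irreducibility:itm:weak-large-t} and finishes the proof.

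I expect the exclusion of a finite-order zero of $\Phi$ at an interior point of its positivity region to be the main obstacle: it is precisely the step in which analyticity cannot be dropped (for a non-analytic eventually positive semigroup such a scalar function can have an isolated interior zero), and it is the exact counterpart of the corresponding step in \cite[Theorem~C-III-3.2(b)]{Nagel1986}. The bound $2t_0$ (rather than $t_0$) in the statement is an artefact of this reduction: the symmetric splitting $t=\tfrac t2+\tfrac t2$ is what places both $g=e^{(t/2)A}f$ and $\psi=(e^{(t/2)A})'\varphi$, and hence the whole forward orbit $(e^{sA}g)_{s\ge 0}$, safely inside the strictly positive regime, so that the positive-case reasoning becomes available.
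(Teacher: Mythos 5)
Your reduction (pass to $g=e^{(t/2)A}f$, $\psi=(e^{(t/2)A})'\varphi$, and study the non-negative analytic function $\Phi(s)=\langle\psi,e^{sA}g\rangle$ with $\Phi(0)=0$) is a reasonable setup, and you correctly identify that everything hinges on excluding an isolated zero of even order of $\Phi$ at $s=0$. But that step -- which you yourself call the heart of the matter -- is not actually carried out: a non-negative real-analytic function can perfectly well have an isolated zero of order $2m$, so nothing about $\Phi$ alone forbids it, and the mechanism you gesture at does not work. The closed ideal generated by the orbit $\{e^{sA}g : s\ge 0\}$ cannot be ``played off against $\psi$'' because the only information available in the finite-order case is $\langle\psi,g\rangle=0$ at the single time $s=0$; you do not know that $\psi$ vanishes on that ideal (that would amount to $\Phi\equiv 0$, i.e.\ the other branch of your case distinction). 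So the proof as written has a genuine gap exactly at its central step.

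The paper avoids the case distinction entirely by a different device. Starting from $\langle\varphi,e^{\tau A}f\rangle=0$, it sets $g:=e^{t_0A}f$ (whose forward orbit is positive) and constructs, for a sequence $t_n\downarrow 0$ with $\sum_n\norm{e^{t_nA}g-g}<\infty$, an \emph{increasing} sequence $g_n\uparrow g$ in $E_+$ with $0\le g_n\le e^{t_nA}g$, via $g_n:=\bigl(g-\sum_{k\ge n}(g-e^{t_kA}g)^+\bigr)^+$. For a \emph{fixed} $m$ and all $n\ge m$ this gives $0\le e^{(\tau-t_0-t_n)A}g_m\le e^{(\tau-t_0-t_n)A}e^{t_nA}g=e^{\tau A}f$, hence $\langle\varphi,e^{(\tau-t_0-t_n)A}g_m\rangle=0$ on a sequence of times accumulating at the interior point $\tau-t_0$; the identity theorem then kills $\langle\varphi,e^{\cdot A}g_m\rangle$ on all of $[0,\infty)$, and letting $m\to\infty$ contradicts Condition~\ref{con:irreducibility}\ref{con:irreducibility:itm:weak-large-t}. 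The essential idea your sketch is missing is precisely this monotone approximation $g_m\le g_n\le e^{t_nA}g$, which converts the single vanishing $\langle\varphi,e^{\tau A}f\rangle=0$ into vanishing of a fixed analytic function at infinitely many accumulating times. If you want to salvage your outline, this construction is what has to replace the unproven exclusion of a finite-order zero.
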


In the light of the terminology of earlier papers on eventual positivity such as \cite{DanersGlueckKennedy2016a, DanersGlueckKennedy2016b} it is natural to refer to the property in the conclusion of the proposition as \emph{uniform eventual strong positivity} of $(e^{tA})_{t \ge 0}$, cf.~\eqref{eq:eventual-strong-positivity}.
In \cite{GregorioMugnolo2020a, GregorioMugnolo2020b} this property was called \emph{eventual irreducibility}.

\begin{proof}[Proof of Proposition~\ref{prop:irred-analytic-strongly}]
    We first make the following preliminary observation: 
    
    $(*)$ 
    If the orbit of a vector $g \in E_+$ is positive (meaning that $e^{tA}g \ge 0$ for all $t \ge 0$) and $(t_n) \subseteq (0,\infty)$ converges to $0$ sufficiently fast, then we can find an increasing sequence $(g_n) \in E_+$ converging to $g$, that satisfies $0 \le g_n \le e^{t_n A} g$ for each index $n$.
    
    Indeed, let $(t_n)$ converge to $0$ so fast that $\sum_{n=1}^\infty \norm{e^{t_nA}g - g} < \infty$. 
    Then we define the (not yet positive) vectors
    \begin{align*}
        g_n 
        := 
        g - \sum_{k=n}^\infty \Big(g - e^{t_k A}g \Big)^+
    \end{align*}
    for each $n \in \bbN$, where the series converges absolutely in $E$.
    Clearly, $(g_n)$ is an increasing sequence of real vectors in $E$ that converges to $g$. 
    For each index $n$ one has $g_n \le g - (g - e^{t_n A}g )^+ = g \land e^{t_nA} g \le e^{t_nA}g$. 
    Since all the vectors $e^{t_nA}g$ are positive we can now replace each $g_n$ with $g_n^+$ to obtain a sequence $(g_n)$ with the desired properties. 
    This proves~$(*)$.

    Assume now that the conclusion of the proposition does not hold, i.e., that there exists a vector $0 \lneq f \in E$ and a time $\tau > 2t_0$ such that $e^{\tau A}$ is not a quasi-interior point of $E_+$. 
    By the characterization of quasi-interior points in \cite[Theorem~II.6.3 on p.\,98]{Schaefer1974}), there is a functional $0 \lneq \varphi \in E'$ such that $\langle \varphi, e^{\tau A} f \rangle = 0$.

    The orbit of the vector $g := e^{t_0 A} f \in E_+$ is positive, so we can apply the preliminary observation~$(*)$ to $g$. 
    Let $(t_n)$ and $(g_n)$ be as given by~$(*)$. 
    By dropping finitely many elements of these sequences we can achieve that $\tau - t_n \ge 2t_0$ for all $n$ and hence, all the operators $e^{(\tau-t_0-t_n)A}$ are positive.
    For all integers $n \ge m \ge 1$ we thus have 
    \begin{align*}
        0 
        \le 
        e^{(\tau-t_0-t_n)A} g_m 
        \le 
        e^{(\tau-t_0-t_n)A} g_n 
        \le 
        e^{(\tau-t_0)A} g 
        = 
        e^{\tau A}f
    \end{align*}
    and thus, $\langle \varphi, e^{(\tau-t_0-t_n)A} g_m \rangle = 0$. 
    As the sequence $(\tau-t_0-t_n)_{n \ge m}$ accumulates at the point $\tau-t_0 \in (0,\infty)$, analyticity of the semigroup implies that $\langle \varphi, e^{tA} g_m \rangle = 0$ for all $t \in [0,\infty)$ and all $m \in \bbN$. As $g_m \to g$, we even have 
    $0 = \langle \varphi, e^{tA} g \rangle = \langle \varphi, e^{(t_0+t)A}f \rangle$ for all $t \in [0,\infty)$.
    According to Theorem~\ref{thm:irreducibility-eventually-positive} this contradicts the persistent irreducibility.
\end{proof}

In Example~\ref{exa:coupling-irreducible-not-strong}, we show that the assumption of analyticity cannot be dropped in Proposition~\ref{prop:irred-analytic-strongly}.

\section{Spectral properties of persistently irreducible semigroups}
    \label{sec:spectrum}

In this section, we study spectral properties of eventually positive semigroups that are persistently irreducible. 
For the case of positive irreducible semigroups, most of these properties are proved in \cite[Section~C-III.3]{Nagel1986}. 
It is instructive to observe that the conclusions of several results in this section resemble similar properties that were shown under stronger conditions in \cite[Section~5]{DanersGlueck2017} (the conclusions are formulated somewhat differently in \cite[Section~5]{DanersGlueck2017}, but can be rephrased in terms of leading eigenvectors, see \cite[Proposition~3.1 and Corollary~3.3]{DanersGlueckKennedy2016b}).

Recall that a linear positive functional $\varphi$ on a Banach lattice is said to be \emph{strictly positive} if its kernel contains no positive non-zero element.

\begin{proposition}
	\label{prop:strong-positivity-of-eigenvectors}
	Let $E$ be a complex Banach lattice and let $(e^{tA})_{t \ge 0}$ be an individually eventually positive and persistently irreducible $C_0$-semigroup on $E$.
	\begin{enumerate}[label=\upshape(\alph*), leftmargin=*, widest=iii]
		\item 
		If $0 \lneq u \in E$ is an eigenvector of $A$ for an eigenvalue $\lambda \in \bbR$, then $u$ is a quasi-interior point of $E_+$.
		
		\item 
		If $0 \lneq \psi \in E'$ is an eigenvector of $A'$ for an eigenvalue $\lambda \in \bbR$, then $\psi$ is strictly positive.
	\end{enumerate}
\end{proposition}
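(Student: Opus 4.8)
The plan is to reduce both parts to the duality characterization of persistent irreducibility provided by Theorem~\ref{thm:irreducibility-eventually-positive}, specifically Condition~\ref{con:irreducibility}\ref{con:irreducibility:itm:weak-large-t} (or its consequence Corollary~\ref{cor:persistently-irred-non-zero}). The key preliminary observation is that if $Au = \lambda u$ with $\lambda \in \bbR$ and $0 \lneq u \in E$, then $e^{tA}u = e^{t\lambda}u$ for all $t \ge 0$; since $u \gneq 0$ and the semigroup is individually eventually positive, there is a time $t_1$ with $e^{tA}u = e^{t\lambda}u \ge 0$ for $t \ge t_1$, which together with $u \gneq 0$ actually forces $u \ge 0$ at once (so $u \in E_+$, as the statement already assumes). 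Analogously, if $A'\psi = \lambda\psi$ then $(e^{tA})'\psi = e^{t\lambda}\psi$ for all $t \ge 0$.

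For part~(a): I would argue by contradiction. If $u \in E_+$ is not a quasi-interior point, then by the characterization of quasi-interior points \cite[Theorem~II.6.3]{Schaefer1974} there exists $0 \lneq \varphi \in E'$ with $\langle \varphi, u\rangle = 0$. Then for every $t \ge 0$,
\[
    \langle \varphi, e^{tA}u\rangle = e^{t\lambda}\langle \varphi, u\rangle = 0,
\]
so in particular $\langle \varphi, e^{tA}u\rangle = 0$ for all $t \ge t_0$ for every $t_0$, contradicting Condition~\ref{con:irreducibility}\ref{con:irreducibility:itm:weak-large-t}, which holds by Theorem~\ref{thm:irreducibility-eventually-positive} since the semigroup is individually eventually positive and persistently irreducible (hence also real: persistent irreducibility of an individually eventually positive semigroup is a statement about closed ideals, and the hypotheses of Theorem~\ref{thm:irreducibility-eventually-positive} include realness — I would note that eventually positive semigroups whose generator has real eigenvectors are automatically real here, or simply add realness to the standing hypotheses as in Corollary~\ref{cor:persistently-irred-non-zero}). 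Thus $u$ is a quasi-interior point.

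For part~(b): again by contradiction. If $0 \lneq \psi \in E'$ is not strictly positive, its kernel contains some $0 \lneq f \in E$, i.e.\ $\langle \psi, f\rangle = 0$ with $f \gneq 0$. Then for all $t \ge 0$,
\[
    \langle \psi, e^{tA}f\rangle = \langle (e^{tA})'\psi, f\rangle = e^{t\lambda}\langle \psi, f\rangle = 0,
\]
which again contradicts Condition~\ref{con:irreducibility}\ref{con:irreducibility:itm:weak-large-t} applied to $f$ and $\psi$. Hence $\psi$ is strictly positive.

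The main obstacle is a bookkeeping point rather than a deep one: one must be sure that Theorem~\ref{thm:irreducibility-eventually-positive} (equivalently Corollary~\ref{cor:persistently-irred-non-zero}) is applicable, which requires the semigroup to be real. I expect the cleanest route is to carry realness as a hypothesis (matching Corollary~\ref{cor:persistently-irred-non-zero}), or to observe that existence of a real eigenvalue with eigenvector in $E_+$ together with eventual positivity already pins down enough structure; either way the analytic content is entirely in Theorem~\ref{thm:irreducibility-eventually-positive}, and the proof above is then essentially immediate. A minor subtlety worth a sentence is the reduction $e^{tA}u = e^{t\lambda}u$, valid because $u \in \dom{A}$ is an eigenvector so the orbit solves the abstract Cauchy problem with the explicit exponential solution.
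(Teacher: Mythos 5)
Your proposal is correct and follows essentially the same route as the paper: both reduce to the duality characterization from Theorem~\ref{thm:irreducibility-eventually-positive} via the identity $e^{tA}u = e^{t\lambda}u$ (resp.\ $(e^{tA})'\psi = e^{t\lambda}\psi$) and the functional characterization of quasi-interior points, with your contrapositive phrasing being only a cosmetic difference from the paper's direct argument. Your remark about realness is a fair observation, since Theorem~\ref{thm:irreducibility-eventually-positive} is stated for real semigroups while the proposition's hypotheses do not mention realness explicitly.
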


\begin{proof}
    (a)
    Let $0 \lneq \varphi \in E'_+$. 
    According to Theorem~\ref{thm:irreducibility-eventually-positive}, Condition~\ref{con:irreducibility}\ref{con:irreducibility:itm:weak-arbi-t} is satisfied, so we can find $t \in [0,\infty)$ such that 
    \begin{align*}
	0 < \langle \varphi, e^{tA} u \rangle = \langle \varphi, e^{t \lambda} u \rangle = e^{t \lambda} \langle \varphi, u \rangle,
    \end{align*}
    so $\langle \varphi, u \rangle > 0$. 
    This shows that $u$ is a quasi-interior point \cite[Theorem~II.6.3 on p.\,98]{Schaefer1974}.
    
    (b) 
    This follows by a similar argument as~(a).
\end{proof}

For the following theorem, recall that an eigenvalue $\lambda$ of a linear operator $A: E \supseteq \dom{A} \to E$ on a Banach space $E$ is called \emph{geometrically simple} if the eigenspace $\ker(\lambda-A)$ is one-dimensional. 
It is called \emph{algebraically simple} if the generalized eigenspace $\bigcup_{n \in \bbN} \ker\big( (\lambda - A)^n \big)$ is one-dimensional. 

\begin{theorem}
	\label{thm:simple-eigenvalue}
	Let $(e^{tA})_{t \ge 0}$ be a real, individually eventually positive, and persistently irreducible $C_0$-semigroup on a complex Banach lattice $E$.
	
	If $\lambda \in \bbR$ is an eigenvalue of both $A$ and $A'$ and $\ker(\lambda - A')$ contains a positive non-zero functional, then $\lambda$ is algebraically simple as an eigenvalue of $A$, the eigenspace $\ker(\lambda - A)$ is spanned by a quasi-interior point of $E_+$, and $\ker(\lambda - A')$ contains a strictly positive functional.
\end{theorem}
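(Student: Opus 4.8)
The plan is to reduce everything to the case of a positive semigroup on the (closed) principal ideal generated by a suitable leading eigenvector, and then to invoke the classical theory for positive irreducible semigroups. First I would set $u \in \ker(\lambda - A)$ with $0 \lneq u$ (such $u$ exists since $\lambda$ is an eigenvalue of $A$, and one may pass to the real or imaginary part using that the semigroup is real; by Proposition~\ref{prop:strong-positivity-of-eigenvectors}(a) applied once a genuine positive eigenvector is found, $u$ is automatically a quasi-interior point of $E_+$ — in fact I would first need to produce a positive eigenvector, which follows because $\ker(\lambda-A')$ contains a positive functional $\psi$ and $\langle\psi, e^{tA} u\rangle = e^{\lambda t}\langle\psi,u\rangle$, forcing $|u|$ to be an eigenvector too via an eventual-positivity/Kato-type argument; alternatively one simply notes $e^{tA}u = e^{\lambda t} u$ and $e^{tA}u \ge 0$ for large $t$ gives $u \ge 0$ after adjusting by a scalar). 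Once $u \gneq 0$ satisfies $e^{tA}u = e^{\lambda t}u$, replacing $A$ by $A - \lambda$ we may assume $\lambda = 0$, so $e^{tA}u = u$ for all $t$; in particular $e^{tA}u \le u$ for all $t \ge 0$, and Lemma~\ref{lem:sub-eigenvector} shows the closed principal ideal $\overline{E_u}$ is uniformly eventually invariant under the semigroup.

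By persistent irreducibility, $\overline{E_u} = E$, i.e.\ $u$ is a quasi-interior point (this also recovers the eigenspace assertion once geometric simplicity is established). The key structural step is then: the semigroup, although only individually eventually positive on $E$, acts as an \emph{actually positive} semigroup on the Banach lattice $(E_u, \norm{\argument}_u)$. Indeed, since $e^{tA}u = u$, for any $f \in E_u$ with $|f| \le u$ one has $\pm e^{tA}f \le e^{tA}u = u$ for all sufficiently large $t$ by individual eventual positivity — so on $E_u$ the semigroup is uniformly eventually positive and contractive for the gauge norm, and on the \emph{closed} (in $E_u$) positive cone the restricted semigroup extends/restricts to a positive $C_0$-semigroup; more carefully, I would show directly that the restriction $(e^{tA}|_{E_u})_{t\ge0}$ is a $C_0$-semigroup on $E_u$ (strong continuity on $E_u$ needs a short argument using the local boundedness from Lemma~\ref{lem:sub-eigenvector} plus a density/uniform-boundedness reasoning) which is irreducible there (a closed invariant ideal of $E_u$ has closure in $E$ an invariant ideal of $E$) and eventually positive with the quasi-interior point $u$ as a fixed point of the tail. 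Then the classical result \cite[Theorem~C-III-3.2]{Nagel1986} — or rather its eventually-positive counterpart, which here costs nothing because on $E_u$ eventual positivity plus $e^{tA}u=u$ makes the tail a positive irreducible semigroup with strictly positive fixed point — gives that $0$ is an algebraically simple eigenvalue of the generator of the semigroup on $E_u$, with one-dimensional eigenspace $\linSpan\{u\}$.

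Finally I would transfer this back to $E$: if $v \in \ker((\lambda-A)^n)$ in $E$, then $v \in E_u$ follows because the generalized eigenvector's orbit stays in a finite-dimensional space on which the semigroup is eventually dominated by multiples of $u$ (here one uses again $e^{tA}u = u$ together with individual eventual positivity to bound $|e^{tA}v|$ by a multiple of $u$ for large $t$, hence $|v| = \lim |e^{tA}v|$ — no, rather: apply the resolvent/Laplace representation or directly that the generalized eigenspace is spanned by vectors whose orbits are polynomial-in-$t$ times $e^{\lambda t}$, each coordinate of which lies in $\overline{E_u}$... the cleanest route is to note the generalized eigenspace is finite-dimensional and invariant, spanned by real vectors, each lying in $E_u$ by the domination argument). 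Then algebraic simplicity on $E_u$ yields algebraic simplicity on $E$. The statement about $\ker(\lambda - A')$ containing a strictly positive functional follows from Proposition~\ref{prop:strong-positivity-of-eigenvectors}(b): the given positive non-zero functional in $\ker(\lambda-A')$ is automatically strictly positive. The main obstacle I anticipate is the passage to the principal ideal $E_u$: verifying that the restricted family is genuinely a positive $C_0$-semigroup on the Banach lattice $E_u$ (strong continuity and the positivity of the \emph{tail} from individual, not uniform, eventual positivity) and that irreducibility is inherited — the operator-range machinery of Section~\ref{sec:eve-invarance}, via Lemma~\ref{lem:sub-eigenvector}, is exactly what makes this rigorous rather than merely plausible.
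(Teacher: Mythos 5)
Your overall strategy---reduce to a positive semigroup on the principal ideal $E_u$ of a fixed positive eigenvector and quote the classical irreducible theory there---is genuinely different from the paper's, but the reduction has gaps that I do not see how to close. First, Lemma~\ref{lem:sub-eigenvector} only gives \emph{eventual} invariance of $E_u$, so there is no restricted semigroup for small times, only a tail; and even for the tail, strong continuity with respect to the gauge norm is a real obstruction, not a ``short argument'': $(E_u,\norm{\argument}_u)$ is an AM-space with unit, i.e.\ a $C(K)$-space, and restrictions of $C_0$-semigroups to such ideals are typically \emph{not} strongly continuous there (think of translation semigroups restricted to an ideal of bounded functions). Second, irreducibility does not obviously descend to $E_u$: a proper closed ideal $J$ of $(E_u,\norm{\argument}_u)$ is separated from $E_u$ by gauge-norm-continuous functionals (point evaluations on $K$), and these need not extend continuously to $E$, so $\overline{J}$ taken in $E$ may well be all of $E$; persistent irreducibility of the semigroup on $E$ then gives you nothing. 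Third, the transfer of algebraic simplicity back to $E$ requires every generalized eigenvector to lie in $E_u$, and your arguments for this are either circular (bounding $\modulus{e^{tA}v}$ by a multiple of $u$ presupposes $v\in E_u$; note $e^{tA}v=v+t\alpha u$ grows linearly) or assume the generalized eigenspace is finite-dimensional, which is part of what must be proved. A smaller but genuine error: the ``alternative'' production of a positive eigenvector (``$e^{tA}u=e^{\lambda t}u$ and $e^{tA}u\ge 0$ for large $t$ gives $u\ge0$'') is invalid, because individual eventual positivity only constrains orbits of \emph{positive} initial data; for a general eigenvector $u$ you have no information about the sign of $e^{tA}u$.

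For comparison, the paper's proof never leaves $E$ and avoids all of these issues. After normalizing $\lambda=0$, it first upgrades the given positive functional $\varphi\in\ker A'$ to a strictly positive one via Proposition~\ref{prop:strong-positivity-of-eigenvectors}(b) (as you do at the end), and then shows that $\ker A\cap E_{\bbR}$ is a sublattice: for $f\in\ker A\cap E_{\bbR}$ one has $\modulus{f}=\modulus{e^{tA}f}\le e^{tA}\modulus{f}$ for large $t$, while $\langle\varphi, e^{tA}\modulus{f}-\modulus{f}\rangle=0$, so strict positivity of $\varphi$ forces $e^{tA}\modulus{f}=\modulus{f}$ and hence $\modulus{f}\in\ker A$. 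Since every positive non-zero element of this closed sublattice is a quasi-interior point of $E_+$ (Proposition~\ref{prop:strong-positivity-of-eigenvectors}(a)), hence a quasi-interior point of the sublattice itself, the sublattice is one-dimensional; this gives geometric simplicity and the quasi-interior eigenvector in one stroke. Algebraic simplicity then follows from the elementary duality argument of Lemma~\ref{lem:geometric-simplicity-plus-non-zero-test}: if $Av=\alpha u$ then $0=\langle\varphi,Av\rangle=\alpha\langle\varphi,u\rangle$ and $\langle\varphi,u\rangle>0$. If you want to salvage your approach, the two points you must supply are a proof that the tail of the semigroup acts as a strongly continuous (or at least usable) positive irreducible semigroup on $E_u$, and an argument placing all generalized eigenvectors inside $E_u$; the paper's route shows both can be bypassed entirely.
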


For a proof of Theorem~\ref{thm:simple-eigenvalue}, we need the following general result:

\begin{lemma}
    \label{lem:geometric-simplicity-plus-non-zero-test}
    Let $A: X \supseteq \dom{A} \to X$ be a closed and densely defined linear operator on a complex Banach space $E$ and let $\lambda \in \bbC$ be an eigenvalue of both $A$ and $A'$. 
    If $\lambda$ is geometrically simple as an eigenvalue of $A$ and there exist eigenvectors $u \in \ker(\lambda - A)$ and $\varphi \in \ker(\lambda - A')$ such that $\langle \varphi, u \rangle \not= 0$, then $\lambda$ is also algebraically simple as an eigenvalue of $A$.
\end{lemma}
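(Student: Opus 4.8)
The plan is to show that if $\lambda$ is geometrically simple but *not* algebraically simple, then every eigenvector $u$ of $A$ is annihilated by every eigenvector $\varphi$ of $A'$, contradicting the hypothesis $\langle\varphi,u\rangle\neq 0$. After translating by $-\lambda$ we may assume $\lambda=0$, so $0$ is an eigenvalue of $A$ with one-dimensional kernel $\ker A=\operatorname{span}\{u\}$, and $0$ is also an eigenvalue of $A'$ with $\varphi\in\ker A'$, i.e. $\varphi|_{\Ima A}=0$.

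The key step is this: since $0$ is geometrically simple but not algebraically simple, the chain of kernels $\ker A\subsetneq \ker A^2$ is strict, so there exists $v\in\dom{A^2}$ with $Av\in\ker A$, $Av\neq 0$. Because $\ker A=\operatorname{span}\{u\}$ and $Av\neq 0$, we may rescale $v$ so that $Av=u$. In particular $u=Av\in\Ima A$. Now apply the functional $\varphi$: since $\varphi\in\ker A'$ means $\langle\varphi,Aw\rangle=\langle A'\varphi,w\rangle=0$ for all $w\in\dom A$, we get $\langle\varphi,u\rangle=\langle\varphi,Av\rangle=0$. This contradicts the assumption that there exist eigenvectors $u\in\ker A$ and $\varphi\in\ker A'$ with $\langle\varphi,u\rangle\neq 0$, because geometric simplicity forces the given $u$ to be a scalar multiple of the $u$ just constructed, hence also annihilated by $\varphi$ — indeed $\ker A$ is one-dimensional, so \emph{every} eigenvector of $A$ for $0$ lies in $\Ima A$ once one of them does, and thus is killed by $\varphi$.

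The main obstacle — really the only delicate point — is making sure the existence of a Jordan-type vector $v$ with $Av=u$ is justified correctly from "not algebraically simple" together with "geometrically simple", rather than merely from the generalized eigenspace being more than one-dimensional in some loose sense. The clean way: algebraic simplicity means $\bigcup_{n}\ker(A^n)$ is one-dimensional; since it contains $\ker A=\operatorname{span}\{u\}$, failure of algebraic simplicity means some $\ker(A^n)$ with $n\geq 2$ strictly contains $\ker A$, and then a minimal such $n$ produces $w\in\ker(A^n)\setminus\ker(A^{n-1})$, whence $A^{n-2}w$ (interpreting $A^0w=w$ when $n=2$) is a vector mapped by $A$ into $\ker A\setminus\{0\}$; rescaling gives $Av=u$ with $v:=A^{n-2}w$. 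One should double-check the domain bookkeeping ($v\in\dom A$, which is automatic since $w\in\dom(A^n)$), but closedness and dense definedness of $A$ are not even needed here; the statement could be proved for any linear operator. (Those hypotheses are presumably stated only so that "eigenvalue of $A'$" is meaningful.) Everything else is a one-line computation, so I expect the write-up to be very short.
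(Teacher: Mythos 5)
Your proof is correct and takes essentially the same route as the paper: the key step in both is to pair the Jordan-chain relation $Av=\alpha u$ against $\varphi$ and use $A'\varphi=0$ to force $\alpha=0$. The paper phrases this directly as showing $\ker A^2=\ker A$ (which then stabilizes the whole kernel chain), whereas you argue by contradiction from a minimal $n$ with $\ker(A^n)\supsetneq\ker A$; this is only an organizational difference.
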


It seems likely that Lemma~\ref{lem:geometric-simplicity-plus-non-zero-test} is known to experts in spectral theory, although we could not find an explicit reference for it. 
For matrices, the lemma is implicitly shown in the proof of \cite[Theorem~7]{Cairns2021}, whereas the infinite-dimensional version is implicitly shown in the proof of (iii) implies (iv) of \cite[Proposition~3.1]{DanersGlueckKennedy2016b}. 

\begin{proof}[Proof of Lemma~\ref{lem:geometric-simplicity-plus-non-zero-test}]
	Without loss of generality, we may assume that $\lambda = 0$.
	Let $v \in \ker A^2$; it suffices to prove that $v \in \ker A$. 
	Since $Av \in \ker A$ and $\ker A$ is spanned by $u$, there exists a scalar $\alpha$ such that $Av = \alpha u$. 
	By testing this equality against $\varphi$ and using that $A'\varphi = 0$, we obtain
	\begin{align*}
		0 = \langle \varphi, Av \rangle = \alpha \langle \varphi, u \rangle.
	\end{align*}
	Since $\langle \varphi, u \rangle \not= 0$, this implies that $\alpha = 0$, so $Av = 0$.
\end{proof}

\begin{proof}[Proof of Theorem~\ref{thm:simple-eigenvalue}]
	There is no loss of generality in assuming that $\lambda = 0$.
	
	Let $0 \lneq \varphi \in \ker(A')$.
	According to Proposition~\ref{prop:strong-positivity-of-eigenvectors}, $\varphi$ is strictly positive and all positive non-zero elements of $\ker(A)$ are quasi-interior points of $E_+$.
	
	Now we show that the real part  $E_\bbR \cap \ker(A)$ of $\ker(A)$ is a sublattice of $E_\bbR$.
	Let $f \in E_\bbR \cap \ker(A)$; it suffices to show that $\modulus{f} \in \ker(A)$. 
	Due to the individual eventual positivity of the semigroup, there exists $t_0 \ge 0$ such that
	\begin{align*}
		\modulus{f} = \modulus{e^{tA}f} \le e^{tA} \modulus{f}
	\end{align*}
	for all $t \ge t_0$; this is a general property of individually eventually positive operator nets, see \cite[Lemma~4.2]{DanersGlueck2017}. 
	For $t \ge t_0$ the vector $e^{tA} \modulus{f} - \modulus{f}$ is therefore positive; 
	but it is also in the kernel of $\varphi$, and hence equal to $0$ due to the strict positivity of $\varphi$. 
	Thus, $e^{tA}\modulus{f} = \modulus{f}$ for all $t \ge t_0$.
	For general $t \ge 0$ this implies
	\begin{align*}
		e^{tA}\modulus{f} = e^{tA} e^{t_0 A} \modulus{f} = e^{(t+t_0)A} \modulus{f} = \modulus{f},
	\end{align*}
	so $\modulus{f} \in \ker A$, as claimed.
	
	Now we can show that $\ker A \cap E_\bbR$ is one-dimensional and spanned by a quasi-interior point of $E_+$.
	We have just seen that $\ker A \cap E_\bbR$ is a closed sublattice of the real Banach lattice $E_\bbR$.
	Since every non-zero positive vector in $\ker A \cap E_\bbR$ is a quasi-interior point within the Banach lattice $E_\bbR$, it is also a quasi-interior point within the Banach lattice $\ker A \cap E_\bbR$, see \cite[Corollary~2 to Theorem~II.6.3 on p.\,98]{Schaefer1974}. 
	Hence, $\ker A \cap E_\bbR$ is at most one-dimensional \cite[Remark~5.9]{Glueck2018}. 
	Since $A$ is real and $\ker A$ is non-zero, we conclude that $\ker A \cap E_\bbR$ is one-dimensional and thus spanned by a vector $u \not= 0$.
    The modulus $\modulus{u}$ is also a non-zero vector in $\ker A \cap E_\bbR$ and thus spans this space, too. 
    By what we have noted at the beginning of the proof, $\modulus{u}$ is a quasi-interior point of $E_+$.
    The fact that $A$ is real implies that $\ker A$ is also spanned by $\modulus{u}$ (over $\bbC$).
	
	Finally, we note that the eigenvalue $\lambda = 0$ of $A$ is algebraically simple. 
	This follows from the geometric simplicity and $\langle \varphi, u \rangle > 0$ by means of Lemma~\ref{lem:geometric-simplicity-plus-non-zero-test}.
\end{proof}

In the following corollary, we list a few simple consequences of Theorem~\ref{thm:simple-eigenvalue}. 
If $E$ is a Banach space, then for each $u\in E$ and $\varphi \in E'$, the (at most) rank-one operator $u\otimes \varphi$ is defined as
\begin{align*}
    u \otimes \varphi: 
    \; 
    E \to E,
    \qquad 
    f & \mapsto \duality{\varphi}{f}u.
\end{align*}

\begin{corollary}
	\label{cor:simple-eigenvalue}
	Let $E$ be a complex Banach lattice and let $(e^{tA})_{t \ge 0}$ be a real, individually eventually positive, and persistently irreducible $C_0$-semigroup on the space $E$.
	\begin{enumerate}[label=\upshape(\alph*), leftmargin=*, widest=iii]
		\item 
            Assume that the spectral bound $\spb(A)$ is not $-\infty$ and is an eigenvalue of $A$. 
            If the operator family $\big( (\lambda - \spb(A)) \Res(\lambda,A) \big)_{\lambda > \spb(A)}$ is bounded in some right neighbourhood of $\spb(A)$,
            then $\spb(A)$ is an algebraically simply eigenvalue of $A$, the eigenspace $\ker(\spb(A) - A)$ is spanned by a quasi-interior point of $E_+$, and the dual eigenspace $\ker(\spb(A) - A')$ contains a strictly positive functional.
		
		\item 
		If the semigroup is mean ergodic and the mean ergodic projection $P$ is non-zero, then $P = u \otimes \varphi$ for a quasi-interior point $u$ of $E_+$ and a strictly positive functional $\varphi \in E'$.
		
		\item 
		If $\spb(A)$ is not $-\infty$ and is a pole of the resolvent, then the pole is simple and the corresponding spectral projection $P$ is given by $P=u\otimes \varphi$ for a quasi-interior point $u$ of $E_+$ and a strictly positive functional $\varphi \in E'$.
	\end{enumerate}
\end{corollary}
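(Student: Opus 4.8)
The plan is to reduce all three parts to Theorem~\ref{thm:simple-eigenvalue} together with Proposition~\ref{prop:strong-positivity-of-eigenvectors}. In each part there is a real number $\mu$ — namely $\mu = \spb(A)$ in~(a) and~(c), and $\mu = 0$ in~(b) — and it will suffice to show that $\mu$ is an eigenvalue of both $A$ and $A'$ and that $\ker(\mu - A')$ contains a non-zero positive functional: Theorem~\ref{thm:simple-eigenvalue} then immediately provides algebraic simplicity of $\mu$, a quasi-interior point $u$ of $E_+$ spanning $\ker(\mu - A)$, and a strictly positive functional in $\ker(\mu - A')$, which is exactly the assertion in~(a). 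In~(b) and~(c) one moreover has to identify the relevant projection $P$ with $u \otimes \varphi$: once $\ker(\mu - A) = \linSpan\{u\}$ and $\Ima P = \ker(\mu - A)$ are known, $P$ has rank one, so $P = u \otimes \varphi$ for the unique $\varphi \in E'$ with $\langle\varphi,u\rangle = 1$; and one checks $\varphi \in \ker(\mu - A')$ (because $\Ima P' = \ker(\mu - A')$ in both situations), $\varphi \ge 0$ (from positivity of $P$ and $u \gneq 0$), hence $\varphi$ is strictly positive by Proposition~\ref{prop:strong-positivity-of-eigenvectors}(b).

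The hard part will be ensuring the positivity of the operator limits that enter these arguments, given that $(e^{tA})_{t\ge 0}$ is only \emph{individually} eventually positive so that no single $e^{tA}$ need be positive. I would handle this uniformly by a splitting device: for $0 \le g \in E$ choose $t_g \ge 0$ with $e^{tA}g \ge 0$ for all $t \ge t_g$; then in $\Res(\lambda,A)g = \int_0^\infty e^{-\lambda t}e^{tA}g\dx t$ the initial part $\int_0^{t_g}e^{-\lambda t}e^{tA}g\dx t$ is norm-bounded uniformly for $\lambda$ in a right neighbourhood of $\spb(A)$, while the tail $\int_{t_g}^\infty e^{-\lambda t}e^{tA}g\dx t$ is positive (and analogously for the Cesàro averages $\frac{1}{T}\int_0^T e^{tA}\dx t$ as $T\to\infty$). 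Hence any limiting or averaging procedure that suppresses the bounded initial part yields a positive operator. In~(c) one should keep in mind that the Laplace representation of $\Res(\lambda,A)$ is valid only for $\lambda > \gbd(A)$; if $\gbd(A) > \spb(A)$ one instead invokes the positivity of the residue of the resolvent at $\spb(A)$ for individually eventually positive semigroups, see \cite{DanersGlueck2017}.

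For part~(a) I would put $s := \spb(A)$ and let $0 \lneq u$ be an eigenvector of $A$ for $s$, which is a quasi-interior point by Proposition~\ref{prop:strong-positivity-of-eigenvectors}(a); fix any $0 \lneq \psi \in E'$, so that $\langle\psi,u\rangle > 0$. By the boundedness hypothesis the net $\big((\lambda - s)\Res(\lambda,A)'\psi\big)_{\lambda\downarrow s}$ is bounded in $E'$ and hence, by Banach--Alaoglu, has a weak-$*$ cluster point $\chi$. Since $\Res(\lambda,A)u = (\lambda-s)^{-1}u$, one gets $\langle\chi,u\rangle = \langle\psi,u\rangle > 0$, so $\chi \ne 0$; the splitting device applied to $\langle\psi,\Res(\lambda,A)g\rangle$ for $0 \le g \in E$ gives $\chi \ge 0$; and the resolvent identity $\Res(\lambda,A)(s-A)x = x - (\lambda-s)\Res(\lambda,A)x$ for $x \in \dom{A}$, combined with the boundedness of the net, yields $\langle\chi,(s-A)x\rangle = 0$ for all $x \in \dom{A}$, i.e.\ $\chi \in \ker(s - A')$. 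Theorem~\ref{thm:simple-eigenvalue} then applies with $\mu = s$.

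For part~(b), the mean ergodic projection $P$ projects onto $\Fix = \ker A$ along $\overline{\Ima A}$; the splitting device applied to the Cesàro averages shows $P \ge 0$, hence also $P' \ge 0$. From $P \ne 0$ we get $\ker A = \Ima P \ne \{0\}$, so $0$ is an eigenvalue of $A$; moreover $\Ima P' = (\ker P)^\perp = (\Ima A)^\perp = \ker A' \ne \{0\}$, and since $P' \ge 0$ is non-zero and $E'$ is a Banach lattice there is $0 \lneq \psi \in E'$ with $0 \lneq P'\psi \in \ker A'$; the common scheme then finishes~(b). For part~(c), expand the resolvent at the pole $s := \spb(A)$, say of order $m \ge 1$, as $\Res(\lambda,A) = \sum_{k \ge -m}(\lambda - s)^kU_k$, and set $B := U_{-m} = \lim_{\lambda\to s}(\lambda - s)^m\Res(\lambda,A) \ne 0$. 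The general theory of isolated spectral values gives $\Ima B \subseteq \ker(s - A)$, and the splitting device gives $B \ge 0$; so $\ker(s - A)$ contains a non-zero positive vector, and dually $\ker(s - A')$ contains the non-zero positive functional $B'\psi$ for suitable $0 \lneq \psi \in E'$. Hence Theorem~\ref{thm:simple-eigenvalue} applies with $\mu = s$ and, in particular, forces $\ker\big((s-A)^m\big) = \ker(s - A)$ to be one-dimensional, so $m = 1$ and the pole is simple; then $P = B = \lim_{\lambda\to s}(\lambda - s)\Res(\lambda,A) \ge 0$ is the spectral projection, and the common scheme identifies it as $u \otimes \varphi$ with $u$ quasi-interior and $\varphi$ strictly positive.
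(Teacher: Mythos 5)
Your overall route is the same as the paper's: all three parts are funnelled through Theorem~\ref{thm:simple-eigenvalue} by producing a non-zero positive functional in the relevant dual eigenspace -- via an Abel-type limit of $(\lambda-\spb(A))\Res(\lambda,A)'\psi$ in~(a), via the mean ergodic projection in~(b), and via the leading Laurent coefficient at the pole in~(c) -- and the identification $P=u\otimes\varphi$ through $\dim\Ima P'=\dim\Ima P=1$ is exactly the paper's argument. Parts~(b) and~(c) go through as you describe (in~(c) you correctly delegate the positivity of the residue to the literature, which is in substance what the paper does by citing the existence of positive eigenvectors of $A$ and $A'$ at a pole of the resolvent).

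There is, however, a genuine gap in part~(a), and it is precisely the obstruction you flag in part~(c) and then overlook in~(a). Your proof that the weak${}^*$ cluster point $\chi$ is positive rests on writing $\Res(\lambda,A)g=\int_0^\infty e^{-\lambda t}e^{tA}g\dx t$ for $\lambda$ in a right neighbourhood of $\spb(A)$ and splitting off the positive tail. That representation is only guaranteed for $\lambda$ above the abscissa of convergence of the Laplace integral, and the hypothesis of~(a) -- boundedness of $(\lambda-\spb(A))\Res(\lambda,A)$ near $\spb(A)$ -- does not force this abscissa to equal $\spb(A)$: already for positive semigroups one can have $\gbd(A)>\spb(A)$ while the resolvent exists and is bounded on a real interval immediately to the right of $\spb(A)$. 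So, as written, the splitting device produces nothing for the range of $\lambda$ over which you take the cluster point. The repair is a real extra ingredient rather than a remark: either prove that for individually eventually positive semigroups the abscissa of convergence equals $\spb(A)$ (a Pringsheim--Landau argument applied to the positive tails of the orbits), or, as the paper does, invoke \cite[Corollary~7.3]{DanersGlueckKennedy2016a}, which gives $\dist\bigl(\lambda\Res(\lambda,A)f,E_+\bigr)\to 0$ as $\lambda$ decreases to $\spb(A)$ for every $f\ge 0$; testing against your positive $\psi$ then yields $\chi\ge 0$. With that input supplied, part~(a), and hence the whole proposal, is correct.
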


The conclusion of part~(c) of the corollary implies, in particular, that the spectral projection $P$ has finite rank and hence, 
$\spb(A)$ is a so-called \emph{Riesz point} of $A$.

\begin{proof}[Proof of Corollary~\ref{cor:simple-eigenvalue}]
	(a) 
        We may assume that $\spb(A) = 0$. 
        According to Theorem~\ref{thm:simple-eigenvalue} it suffices to show that $\ker A'$ contains a non-zero positive element. 
	Due to the boundedness assumption on the resolvent, $\ker A'$ contains a non-zero element $\varphi$ \cite[Proposition~4.3.6]{ArendtBattyHieberNeubrander2011}. 
        The proof in this reference also shows how $\varphi$ can be obtained: 
        let $u \in \ker A$ be non-zero and choose an abritrary functional $\psi \in E'$ such that $\langle \psi, u \rangle \not= 0$. 
        Then the net $\big( \lambda \Res(\lambda,A)'\psi \big)_{\lambda \in (0,\infty)}$ -- where $(0,\infty)$ is ordered conversely to the order inherited from $\bbR$ --  
        has a weak${}^*$-convergent subnet by the Banach--Alaoglu theorem and the limit $\varphi$ of this subnet is non-zero and an element of $\ker A'$. 
        
        In this argument, we may choose the initial functional $\psi$ to be positive.
        The individual eventual positivity of the semigroup implies that, for each $0\le f\in E$, the distance of $\lambda \Res(\lambda,A)f$ to $E_+$ converges to $0$ as $t \to \infty$ \cite[Corollary~7.3]{DanersGlueckKennedy2016a}.
        Thus, it follows from the positivity of $\psi$ that $\varphi$ is positive, as well.
        
	(b) 
        If $(e^{tA})_{t \ge 0}$ is mean ergodic, the mean ergodic projection $P$ is the projection onto $\ker A$ along $\overline{\Ima A}$ and $\ker A$ separates $\ker A'$; see \cite[Proposition~4.3.4(a)]{ArendtBattyHieberNeubrander2011}. 
        The same reference also shows that $(0,\infty)$ is in the resolvent set of $A$ and that $\big(\lambda \Res(\lambda, A)\big)_{\lambda \in (0,\infty)}$ is bounded in a right neighbourhood of $0$.
        
        Since $\Ima P = \ker A$ and $P$ was assumed to be non-zero, $\ker A$ contains a non-zero element. 
        So $\spb(A) = 0$ and we can apply~(a) to conclude that $\ker A$ is spanned by a quasi-interior point $u$ of $E_+$ and that $\ker A'$ contains a strictly positive functional $\varphi$.
        
        Being a projection, $P$ satisfies $\dim \Ima P' = \dim \Ima P$, see for instance \cite[Lemma~1.2.6]{Glueck2016}, 
        and $\dim \Ima P = \dim \ker A$ is one according to the preceding paragraph. 
        Moreover, as $P'$ is the weak${}^*$-limit of the dual operators of the Cesàro means of $(e^{tA})_{t \ge 0}$, its range $\Ima P'$ contains $\ker A'$ and thus, in particular, $\varphi$, so $\Ima P'$ is actually spanned by $\varphi$.
        Re-scaling $\varphi$ and $u$ such that $\duality{\varphi}{u}=1$, we deduce that $P = u \otimes \varphi$.
	
	(c) 
	Since $\spb(A)$ is a pole of the resolvent $\Res(\argument,A)$, it is an eigenvalue of both $A$ and $A'$ and the corresponding eigenspaces each contain a positive, non-zero vector; see \cite[Proposition~6.2.7]{Glueck2016}. Therefore, by Theorem~\ref{thm:simple-eigenvalue}, the spectral bound $\spb(A)$ is an algebraically simple eigenvalue of $A$, the eigenspace $\ker(\spb(A)-A)$ is spanned by a quasi-interior point $u$ of $E_+$,  and $\ker(\spb(A)-A')$ contains a strictly positive functional $\varphi$. 
    It follows from the algebraic simplicity that the pole $\spb(A)$ of the resolvent of $A$ is simple, see for instance, \cite[Proposition~A.3.2(d)]{Glueck2016}. In particular, $\Ima P=\ker(\spb(A)-A)$ and $\Ima P'=\ker(\spb(A)-A')$. 
    Since $\Ima P$ and $\Ima P'$ have the same dimension \cite[Lemma~1.2.6]{Glueck2016}, the image $\Ima P'$ is spanned by $\varphi$. 
    By re-scaling $\varphi$ and $u$ such that $\duality{\varphi}{u}=1$, we get $P = u \otimes \varphi$.
\end{proof}

After the preceding results on eigenvectors we discuss in Theorem~\ref{thm:non-empty-spec-irred_C_0} below that, on spaces of continuous functions, persistent irreducibility gives that the spectrum of the semigroup generator is non-empty.
For positive semigroups, this is known \cite[Proposition~B-III-3.5(a) on p.\,185]{Nagel1986}, but the proof given in this reference does not directly extend to the eventually positive case since the resolvent of an eventually positive semigroup need not be positive at any point.

Nevertheless, as in the positive case, an essential ingredient for the proof is the observation in the following proposition. 
We say that a bounded linear operator $T$ on a Banach lattice $E$ has \emph{individually eventually positive powers}, if for each $f \in E_+$, there exists an integer $n_0 \ge 0$ such that $T^nf \ge 0$ for each $n \ge n_0$.

\begin{proposition}
    \label{prop:spr}
    Let $T$ be a bounded linear operator on a Banach lattice $E$ and assume that $T$ has individually eventually positive powers. 
    Assume that there is a vector $h \in E_+$ and a number $\delta > 0$ such that $Th \ge \delta h$ and  $T^n h \not= 0$ for each $n \in \bbN$ (so in particular, $h$ is non-zero).
    Then $\spr(T) \ge \delta$.
\end{proposition}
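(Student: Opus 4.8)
The plan is to derive a lower bound on the spectral radius by iterating the inequality $Th \ge \delta h$ and combining it with the eventual positivity of the powers of $T$. First I would observe that, since $T$ has individually eventually positive powers, there exists $n_0 \in \bbN$ such that $T^n h \ge 0$ for all $n \ge n_0$. For any such $n$, applying $T^n$ (which is positive on the order interval we care about, in the sense that it preserves the inequalities between the relevant positive vectors once enough positivity has kicked in) to the relation $Th \ge \delta h$ — or rather, iterating $Th \ge \delta h$ directly $k$ times to get $T^k h \ge \delta^k h$, then applying $T^n$ — one expects $T^{n+k} h \ge \delta^k T^n h \ge 0$. A cleaner route: by an easy induction using only $Th \ge \delta h$ and positivity of $T^n$ for large $n$, we get $T^m h \ge \delta^{m - n_0} T^{n_0} h$ for all $m \ge n_0$.

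Next I would turn this into an estimate on norms. Fix a positive functional $\varphi \in E'$ with $\langle \varphi, T^{n_0} h \rangle > 0$; such a $\varphi$ exists because $T^{n_0} h \gneq 0$ (it is non-zero by hypothesis and positive since $n_0 \ge $ the eventual-positivity threshold), and every non-zero positive vector in a Banach lattice is detected by some positive functional. Then for all $m \ge n_0$,
\[
    \langle \varphi, T^m h \rangle \ge \delta^{m-n_0} \langle \varphi, T^{n_0} h \rangle,
\]
so $\norm{T^m} \norm{\varphi} \norm{h} \ge \langle \varphi, T^m h \rangle \ge \delta^{m-n_0} \langle \varphi, T^{n_0} h \rangle$, which gives $\norm{T^m}^{1/m} \ge \delta \cdot \big( \delta^{-n_0} \langle \varphi, T^{n_0} h \rangle / (\norm{\varphi}\norm{h}) \big)^{1/m}$. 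Letting $m \to \infty$ and invoking Gelfand's formula $\spr(T) = \lim_m \norm{T^m}^{1/m}$, the $m$-th root of the constant tends to $1$, so $\spr(T) \ge \delta$.

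The main obstacle I anticipate is justifying the step $T^m h \ge \delta^{m-n_0} T^{n_0} h$ carefully: one cannot simply apply $T$ to both sides of $Th \ge \delta h$ because $T$ need not be positive, so I would instead only apply powers $T^n$ with $n$ large enough that eventual positivity is available, and keep track of which vectors remain in the positive cone. Concretely, I would argue: for $m \ge n_0$, write $T^m h = T^{n_0}(T^{m-n_0} h)$; since $T^{m-n_0} h \ge \delta^{m - n_0} h$ can fail for small exponents, the safe induction is $T^{n_0 + 1} h = T^{n_0}(Th) \ge T^{n_0}(\delta h) = \delta T^{n_0} h \ge 0$ (using positivity of $T^{n_0}$ on the inequality $Th \ge \delta h \ge 0$), and then $T^{n_0 + 2} h = T(T^{n_0+1} h)$ — but here $T$ is applied once, which again is problematic. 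The robust fix is to note $T^{n_0 + k} h = T^{n_0}(T^k h)$ and that $T^k h \ge \delta^k h$ follows from iterating $Th \ge \delta h$ only when the intermediate vectors $T^j h$ are positive, i.e. for $j \ge n_0$; so one should first establish $T^j h \gneq 0$ for all $j \ge n_0$ (which is exactly the hypothesis $T^n h \ne 0$ together with eventual positivity), then prove $T^{m}h \ge \delta\, T^{m-1} h$ for all $m > n_0$ by applying the positive operator $T^{n_0}$ to $T^{m - n_0 - 1}(Th) \ge T^{m-n_0-1}(\delta h)$ — which still requires positivity of $T^{m-n_0-1}$, available precisely because $m - n_0 - 1 \ge 0$ only gives $T^0 = \id$, and for larger indices we are back in the eventually-positive regime. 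Chaining these, $T^m h \ge \delta^{m - n_0} T^{n_0} h$ for $m \ge n_0$, and the norm estimate above finishes the argument.
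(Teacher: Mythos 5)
Your overall strategy is the same as the paper's: iterate $Th \ge \delta h$ in the eventually positive regime to obtain $T^{n_0+k}h \ge \delta^k T^{n_0}h \ge 0$, note that $T^{n_0}h \gneq 0$, and conclude via Gelfand's formula. (Your use of a positive functional $\varphi$ detecting $T^{n_0}h$ in place of the monotonicity of the norm on $E_+$ is a harmless cosmetic variation.) However, the justification of the central inequality has a genuine gap. The hypothesis is that the powers of $T$ are \emph{individually} eventually positive: the threshold depends on the vector to which the powers are applied, and no single operator $T^{n_0}$ need be positive. So the steps ``using positivity of $T^{n_0}$ on the inequality $Th \ge \delta h$'' and ``applying the positive operator $T^{n_0}$'' are not available. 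Your $n_0$ is chosen only so that $T^n h \ge 0$ for $n \ge n_0$; this says nothing about $T^n$ applied to the positive vector $Th - \delta h$, which is exactly what you need in order to propagate the inequality. Your attempted ``robust fix'' still runs into this: for $m$ close to $n_0$ it requires $T^j(Th - \delta h) \ge 0$ for small $j$ (e.g.\ $j=1$), which eventual positivity does not provide.

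The repair is one line, and it is what the paper does: apply individual eventual positivity to the \emph{two} positive vectors $h$ and $g := Th - \delta h$ and choose $n_0$ to exceed both thresholds. Then for every $n \ge n_0$ one has $T^{n+1}h - \delta T^n h = T^n g \ge 0$ and $T^n h \ge 0$ simultaneously, so the chain $T^{n_0+k}h \ge \delta T^{n_0+k-1}h \ge \dots \ge \delta^k T^{n_0}h \ge 0$ is immediate, and your functional (or norm) estimate then completes the proof exactly as you describe.
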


Before the proof it is worthwhile to observe that the conclusion is very easy to see if $T$ is positive: for each $n \in \bbN$ one then has $T^n h \ge \delta^n h$ and thus $\norm{T^n} \ge \delta^n$ as $h \not= 0$. 
In the eventually positive case one can argue as follows:

\begin{proof}[Proof of Proposition~\ref{prop:spr}]
    We may assume that $\norm{h} = 1$.
    Since $Th - \delta h$ is positive, there exists $n_0 \ge 0$ such that $T^{n+1} h \ge \delta T^n h \ge 0$ for each $n \ge n_0$. 
    For every $k \ge 0$ this yields
    \begin{align*}
        T^{n_0+k} h
        \ge 
        \delta T^{n_0+k-1}h 
        \ge 
        \delta^2 T^{n_0+k-2} h
        \ge 
        \dots 
        \ge 
        \delta^{k} T^{n_0} h
        \ge 
        0
        .
    \end{align*} 
    Hence, 
    \begin{align*}
        \norm{T^{n_0+k}} 
        \ge 
        \norm{T^{n_0+k}h}
        \ge 
        \delta^k \alpha,
    \end{align*}
    where $\alpha := \norm{T^{n_0} h}$ is non-zero by assumption. 
    So 
    \begin{align*}
        \spr(T) 
        = 
        \lim_{k \to \infty} 
        \norm{T^{n_0+k}}^{\frac{1}{n_0+k}} 
        \ge 
        \lim_{k \to \infty} 
        \delta^{\frac{k}{n_0+k}} \alpha^{\frac{1}{n_0+k}} 
        = 
        \delta,
    \end{align*}
    which proves the proposition.
\end{proof}

Before we use this proposition to study persistently irreducible semigroups on spaces $C_0(L)$ for locally compact $L$, we mention the following simple consequence on $C(K)$-spaces for compact $K$ (i.e., abstractly speaking, on AM-spaces with order unit), which holds without any irreducibility assumption.
This generalizes \cite[Corollary~7.9]{DanersGlueckKennedy2016a}, 
where the semigroup was assumed to be uniformly eventually strongly positive rather than just individually eventually positive. 

\begin{corollary}
    \label{cor:non-empty-spec-C-K}
    Let $(e^{tA})_{t \ge 0}$ be a real and individually eventually positive $C_0$-semigroup on an AM-space $E$ with order unit. 
    If the semigroup is not nilpotent, then $\spec(A)$ is non-empty.
\end{corollary}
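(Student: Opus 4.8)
The plan is to argue by contradiction, so suppose $\spec(A) = \emptyset$. Then $\mu \mapsto \Res(\mu,A)$ is an entire $\calL(E)$-valued function, and applying the Cauchy estimates to its derivatives together with the identity $\Res(\mu,A)^n = \frac{(-1)^{n-1}}{(n-1)!}\frac{d^{n-1}}{d\mu^{n-1}}\Res(\mu,A)$ yields $\norm{\Res(\mu,A)^n} \le M_\rho\,\rho^{1-n}$ for every $\rho > 0$ and every $n$; hence $\spr(\Res(\mu,A)) = 0$ for each $\mu \in \bbC$. On the other hand $\Res(\mu,A)$ is injective, so it is a non-zero quasi-nilpotent operator. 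I will contradict this with the help of Proposition~\ref{prop:spr}, applied to the resolvent $T := \Res(\mu,A)$ for a suitable $\mu$.

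First I would assemble the ingredients required by Proposition~\ref{prop:spr}. Let $\one$ be an order unit of $E$, so that $\one$ lies in the (open) interior of $E_+$; since $\lambda\Res(\lambda,A)\one \to \one$ as $\lambda\to\infty$ through the reals, I can fix $\mu$ so large that $\Res(\mu,A)\one \ge \delta\one$ for some $\delta > 0$. This is the only place where the hypothesis that $E$ is an AM-space with order unit enters. Next I would invoke the fact that, for $\mu$ large, the resolvent $\Res(\mu,A)$ has individually eventually positive powers -- this is where individual eventual positivity and the non-nilpotency of the semigroup are used, see the next paragraph. Granting this, Proposition~\ref{prop:spr} applies with $h := \one$: one has $Th = \Res(\mu,A)\one \ge \delta h$, and $T^n h = \Res(\mu,A)^n\one \ne 0$ for all $n$ because the resolvent is injective. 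Therefore $\spr(\Res(\mu,A)) \ge \delta > 0$, contradicting the first paragraph, and so $\spec(A) \ne \emptyset$.

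The crux -- and the step I expect to be the main obstacle -- is the claim that $\Res(\mu,A)$ has individually eventually positive powers for $\mu$ large. The natural approach is via the representation $\Res(\mu,A)^n f = \frac{1}{(n-1)!}\int_0^\infty t^{n-1}e^{-\mu t}e^{tA}f \dx t$ for $0 \le f \in E$: split the integral at a time $t_1 = t_1(f)$ beyond which $e^{tA}f \ge 0$; the tail part $\frac{1}{(n-1)!}\int_{t_1}^\infty t^{n-1}e^{-\mu t}e^{tA}f\dx t$ is positive, while the initial part has norm at most $\frac{t_1^{\,n}}{n!}\sup_{s\le t_1}\norm{e^{sA}f}$ and is therefore, using the order unit, dominated in modulus by a scalar multiple of $\one$ that decays super-exponentially in $n$; one then has to show that the positive tail part eventually dominates this error, which is precisely the point where non-nilpotency prevents the tail from decaying too fast. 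Making this domination rigorous -- i.e., passing from individual eventual positivity of the semigroup to individual eventual positivity of the resolvent powers -- is the technical heart of the argument; alternatively, one may cite the corresponding statement from the literature on eventually positive semigroups.
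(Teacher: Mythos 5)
Your overall strategy -- get a positive lower bound on a spectral radius via Proposition~\ref{prop:spr} applied with $h = \one$ -- is the same as the paper's, but you apply it to the wrong operator, and this is where the argument breaks. The step you yourself flag as the ``technical heart,'' namely that $\Res(\mu,A)$ has individually eventually positive powers, is a genuine gap: it is neither proved by your sketch nor available in the literature. Indeed, the paper explicitly warns (before Theorem~\ref{thm:non-empty-spec-irred_C_0}) that the resolvent of an eventually positive semigroup need not be positive at any point, and that this is precisely why one cannot argue with the resolvent as in the positive case. Your splitting $\Res(\mu,A)^n f = \text{head} + \text{tail}$ reduces the problem to showing that the positive tail dominates the modulus of the head, i.e.\ that $\frac{1}{(n-1)!}\int_{t_1}^\infty t^{n-1}e^{-\mu t}e^{tA}f \dx t \ge \varepsilon_n \one$ with $\varepsilon_n$ of the order $t_1^n/n!$. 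This is a \emph{strong} positivity requirement on the tail (domination of a multiple of the order unit), and it does not follow from individual eventual positivity plus non-nilpotency: nothing prevents $e^{tA}f$ from being positive for $t \ge t_1$ yet supported, say, away from part of the underlying space, in which case the tail dominates no multiple of $\one$ whatsoever. Non-nilpotency controls only that the tail is non-zero, not its ``spread,'' so the domination you need cannot be extracted from the stated hypotheses by this route.

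The paper sidesteps the issue entirely by applying Proposition~\ref{prop:spr} to a single semigroup operator $T = e^{sA}$ instead of the resolvent: the powers $T^n = e^{nsA}$ are individually eventually positive \emph{trivially}, directly from the individual eventual positivity of the semigroup, and $e^{sA}\one \ge \tfrac12\one$ for small $s$ by strong continuity. The price is that (i) one must check $T^n\one \ne 0$ for all $n$, which the paper does using non-nilpotency together with Proposition~\ref{prop:ind-implies-unif-for-closed-subspaces-absorbing} (your resolvent is injective, so you got this for free), and (ii) the conclusion $\spr(e^{sA}) > 0$ only says the growth bound is finite, so one must invoke the equality of growth bound and spectral bound for individually eventually positive semigroups on AM-spaces with order unit \cite[Theorem~7.8(iii)]{DanersGlueckKennedy2016a} to conclude $\spec(A) \ne \emptyset$. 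If you want to salvage your write-up, replace the resolvent by $e^{sA}$ in the application of Proposition~\ref{prop:spr} and add these two ingredients.
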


\begin{proof}
    Let $\one$ be a (strong) order unit of $E$. 
    Due to the strong continuity at time $0$ and since the semigroup is real, there exists $s > 0$ such that $e^{sA}\one \ge \frac{1}{2} \one$. 
    
    Moreover, one has $e^{tA}\one \not= 0$ for each $t \in [0,\infty)$. 
    Indeed, assume the contrary. 
    Then $e^{tA}\one = 0$ for all sufficiently large $t$, say $t \ge t_0$.
    For every real vector $f \in E$ between $0$ and $\one$ one has $0 \le e^{tA}f \le e^{tA} \one$ for all sufficiently large $t$ due to the individual eventual positivity. 
    Hence, each orbit of the semigroup vanishes eventually. 
    By Proposition~\ref{prop:ind-implies-unif-for-closed-subspaces-absorbing} this implies that the semigroup is nilpotent, which contradicts our assumption.
    
    As the powers of the operator $e^{sA}$ are individually eventually positive, we can apply Proposition~\ref{prop:spr} to conclude that the spectral radius of $e^{sA}$ is non-zero. 
    Thus, the growth bound of the semigroup is not $-\infty$. 
    But for individually eventually positive semigroups on AM-spaces with unit it was shown in \cite[Theorem~7.8(iii)]{DanersGlueckKennedy2016a} that the growth bound coincides with the spectral bound. 
    So the spectrum is indeed non-empty.
\end{proof}

The assumption that the semigroup not be nilpotent is not redundant in the corollary: 
there exist real $C_0$-semigroup on $C\big([0,1]\big)$ that are nilpotent (see for instance \cite[Example~B-III-1.2(c) on p.\,165]{Nagel1986}). Obviously, such a semigroup is (even uniformly) eventually positive and yet has  empty spectrum. 
On the other hand, this cannot happen for a positive $C_0$-semigroup on $C(K)$: for those, it is a classical result that the spectrum of the generator is always non-empty \cite[Theorem~B-III-1.1 on p.\,164]{Nagel1986}).

On the space $C_0(L)$ of continuous functions on a locally compact Hausdorff space $L$ that vanish at infinity, it can happen that a positive semigroup is not nilpotent and still its generator has empty spectrum; see \cite[Example~B-III-1.2(b) on p.\,165]{Nagel1986} for a concrete example where this occurs. 
However, if the semigroup is also irreducible, then it follows again that the generator has non-empty spectrum \cite[Proposition~B-III-3.5(a) on p.\,185]{Nagel1986}.
The following theorem generalizes this result to individually eventually positive semigroups.
Since the relation between individual eventual positivity of the semigroup and the resolvent operators is much subtler than in the positive case, we cannot use the same argument as in \cite[Proposition~B-III-3.5(a) on p.\,185]{Nagel1986}. 
We will thus argue with a sum of finitely many semigroup operators rather than with the resolvent.

\begin{theorem}
    \label{thm:non-empty-spec-irred_C_0}
    Let $L$ be locally compact Hausdorff and let $(e^{tA})_{t \ge 0}$ be a real, individually eventually positive, and persistently irreducible $C_0$-semigroup on $C_0(L)$. 
    Then $\spec(A)$ is non-empty.
\end{theorem}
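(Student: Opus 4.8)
The plan is to argue by contradiction, so suppose $\spec(A)=\emptyset$. Since persistent irreducibility, irreducibility, individual eventual positivity and the emptiness of the spectrum are all unaffected by replacing $A$ with $A-\lambda$, I would first normalise so that the growth bound satisfies $\gbd(A)<0$; then $0\in\resSet(A)$, $\Res(0,A)=\int_0^\infty e^{tA}\dx t$, and $A$ is necessarily unbounded, so $\dim C_0(L)=\infty$. Next dispose of a degenerate case: if every $0\lneq f$ had $e^{tA}f=0$ for all large $t$, then by Proposition~\ref{prop:ind-implies-unif-for-closed-subspaces-absorbing} (with $W=\{0\}$) the semigroup would be nilpotent, hence not persistently irreducible on an infinite-dimensional space. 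So pick $0\lneq f$ and $t_0\ge 0$ with $e^{tA}f\ge 0$ for $t\ge t_0$ and $e^{tA}f\neq 0$ for some such $t$; Corollary~\ref{cor:persistently-irred-non-zero} then gives $e^{tA}f\gneq 0$ for all $t\ge t_0$. Put $g:=e^{t_0A}f$, so $e^{tA}g=e^{(t+t_0)A}f\gneq 0$ for every $t\ge 0$, and define $h:=\int_0^\infty e^{tA}g\dx t=\Res(0,A)g$. Then $h\gneq 0$, $h\in\dom(A)$, $Ah=-g$; hence $\tfrac{\dxShort}{\dxShort t}e^{tA}h=-e^{tA}g\le 0$, so the orbit of $h$ is order-decreasing, and $e^{tA}h=\int_t^\infty e^{sA}g\dx s$ shows $0\le e^{tA}h\le h$ for all $t\ge 0$.

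I would then upgrade $h$ to a quasi-interior point. Since $e^{tA}h\le h$ for all $t\ge 0$, Lemma~\ref{lem:sub-eigenvector} (with $t_0$ there taken to be $0$) shows $\overline{E_h}$ is a uniformly eventually invariant closed ideal; it is non-zero, so persistent irreducibility forces $\overline{E_h}=C_0(L)$, i.e.\ $h$ is a quasi-interior point of $C_0(L)_+$, equivalently $h(x)>0$ for all $x\in L$. Applying the same reasoning to $e^{\tau A}h$ (which satisfies $e^{tA}(e^{\tau A}h)=e^{(t+\tau)A}h\le e^{\tau A}h$ for all $t\ge0$, as the orbit decreases) shows that $e^{\tau A}h$ is a quasi-interior point for every $\tau\ge 0$.

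For the contradiction I would work, as suggested in the text, with finite sums of semigroup operators in place of the resolvent. Fix $b>0$ and for $N\in\bbN$ set $S_N:=\tfrac bN\sum_{k=1}^N e^{(kb/N)A}$, a sum of semigroup operators evaluated at strictly positive times. Each $S_N$ has individually eventually positive powers: $S_N^m$ is a finite sum of operators $e^{uA}$ with $u\ge mb/N$, so for fixed $0\le\phi\in C_0(L)$ and $m$ large all these times lie beyond the time after which $e^{uA}\phi\ge 0$, whence $S_N^m\phi\ge 0$. Also $S_N\to\int_0^b e^{sA}\dx s$ in operator norm (a right Riemann sum of the norm-continuous map $s\mapsto e^{sA}$), and $\int_0^b e^{sA}\dx s=(e^{bA}-I)A^{-1}$; since $\spec(A)=\emptyset$ the operator $A^{-1}=-\Res(0,A)$ is quasi-nilpotent, and as it commutes with $e^{bA}-I$ we get $\spr\bigl(\int_0^b e^{sA}\dx s\bigr)=0$, so by upper semicontinuity of the spectral radius $\spr(S_N)\to 0$. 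On the other hand, since $e^{sA}h\gneq 0$ for all $s$ we have $S_Nh\ge 0$ and $S_N^mh\neq 0$ for every $m$; if one can exhibit a $\delta>0$ \emph{independent of $N$} with $S_Nh\ge\delta h$ for all large $N$, then Proposition~\ref{prop:spr} gives $\spr(S_N)\ge\delta$ for all large $N$, contradicting $\spr(S_N)\to 0$ and finishing the proof.

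The genuinely delicate step -- and the one where the eventually positive case departs from the positive one -- is precisely the construction of this uniform $\delta$. The cheap estimate $S_Nh\ge b\,e^{bA}h$, obtained from the monotonicity of the orbit, is not enough: although $e^{bA}h$ is quasi-interior, it need not dominate a fixed positive multiple of $h$, because in $C_0(L)$ the quasi-interior point $h$ is not an interior point of the cone when $L$ is non-compact. The plan to circumvent this is to show that the orbit of $h$ does not collapse too quickly at small times, i.e.\ $e^{sA}h\ge(1-\beta s)h$ on a neighbourhood of $0$ for some $\beta>0$; via $h-e^{sA}h=\int_0^s e^{uA}g\dx u$ this reduces to a lower bound, uniform in $x\in L$, on the fraction $\int_0^s(e^{uA}g)(x)\dx u\big/\int_0^\infty(e^{uA}g)(x)\dx u$, which I would establish by choosing the initial vector $f$ so that $g$ lies in the principal ideal $E_h$, together with Condition~\ref{con:irreducibility}\ref{con:irreducibility:itm:weak-large-t} from Theorem~\ref{thm:irreducibility-eventually-positive}. (Alternatively, one may avoid the operator-norm estimate altogether with a Pringsheim-type argument: $\lambda\mapsto\langle\varphi,\Res(\lambda,A)h\rangle$ would have to be entire, while for a suitable strictly positive $\varphi$ it is the Laplace transform of the non-negative function $t\mapsto\langle\varphi,e^{tA}h\rangle$ with finite abscissa of convergence, hence has a singularity there.)
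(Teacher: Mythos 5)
Your overall strategy is the same as the paper's -- produce a positive finite combination $T$ of semigroup operators at strictly positive times and a vector $h \gneq 0$ with $Th \ge \delta h$, feed this into Proposition~\ref{prop:spr} to get $\spr(T) > 0$, and conclude via the equality of growth bound and spectral bound on AM-spaces -- but the one step that carries all the difficulty, namely the uniform domination $Th \ge \delta h$, is exactly the step you leave as an unexecuted ``plan'', and the plan points in an unpromising direction. By first upgrading $h$ to a quasi-interior point you make the domination \emph{harder}: on a non-compact $L$ a quasi-interior point is strictly positive everywhere but decays at infinity, so you must control the ratio $(S_N h)(x)/h(x)$ uniformly over all of $L$, and nothing in your reduction (the bound $e^{sA}h \ge (1-\beta s)h$ near $s=0$, equivalently a uniform-in-$x$ bound on $\int_0^s (e^{uA}g)(x)\dx u \big/ \int_0^\infty (e^{uA}g)(x)\dx u$) rules out that for points $x$ far out in $L$ the mass of $u \mapsto (e^{uA}g)(x)$ concentrates near $u=0$; requiring $g \in E_h$ does not help because $e^{uA}$ need not be positive for the small $u$ that matter. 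The paper's proof goes the opposite way: it takes $h \gneq 0$ with \emph{compact} support $S$, uses Condition~\ref{con:irreducibility}\ref{con:irreducibility:itm:weak-large-t} (via Theorem~\ref{thm:irreducibility-eventually-positive}) to find for each $x \in S$ a time $t_x \ge t_0 > 0$ with $(e^{t_x A}h)(x) > 0$, extracts by compactness of $S$ finitely many times $t_{x_1},\dots,t_{x_m}$ whose orbit supports cover $S$, and sets $T := e^{t_{x_1}A} + \dots + e^{t_{x_m}A}$; then $Th \ge \varepsilon$ on $S$ and $h = 0$ off $S$, so $Th \ge \delta h$ for free. That compact-support trick is the missing idea.

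Two further points in your write-up do not hold as stated. First, the Riemann sums $S_N$ do \emph{not} converge to $\int_0^b e^{sA}\dx s$ in operator norm for a general $C_0$-semigroup ($s \mapsto e^{sA}$ is only strongly continuous -- think of a shift semigroup), and the spectral radius is not upper semicontinuous along strongly convergent sequences; this part is repairable, since under the contradiction hypothesis $\gbd(A) = \spb(A) = -\infty$ forces $\spr(e^{tA}) = 0$ for every $t>0$ and subadditivity of $\spr$ on commuting operators then gives $\spr(S_N)=0$ directly. Second, the Pringsheim fallback is circular: under the hypothesis $\spec(A) = \emptyset$ the growth bound is $-\infty$ (by the very result you would invoke at the end), so the Laplace transform of every orbit has abscissa of convergence $-\infty$ and Pringsheim--Landau yields no singularity; the ``finite abscissa of convergence'' is precisely the content of the theorem, not something you may assume.
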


\begin{proof}
    Consider a positive non-zero vector $h \in C_0(L)$ which has compact support $S$. 
    According to Corollary~\ref{cor:persistently-irred-non-zero}, we have $e^{tA}h \not= 0$ for each $t \in [0,\infty)$. Using the individual eventual positivity, we can
    choose a time $t_0 > 0$ such that $e^{tA}h \gneq 0$ for each $t \ge t_0$. 
    Due to the persistent irreducibility,  Theorem~\ref{thm:irreducibility-eventually-positive} shows that the semigroup satisfies Condition~\ref{con:irreducibility}\ref{con:irreducibility:itm:weak-large-t}.
    Hence, for every $x \in L$ there exists a time $t_x \in [t_0, \infty)$ such that $(e^{t_xA}h)(x) > 0$. 

    For every $x \in L$, we denote the open support of the continuous positive function $e^{t_x A}h$ by $U_x$. 
    Since $x \in U_x$ for each $x \in L$, we have $\bigcup_{x \in S} U_x \supseteq S$. 
    Hence, due to the compactness of $S$ we can find finitely many points $x_1, \dots, x_m \in S$ such that $U_{x_1}$, \dots, $U_{x_m}$ cover $S$.
    
    Now consider the bounded linear operator $T := e^{t_{x_1}A} + \dots + e^{t_{x_m}A}$ on $C_0(L)$. 
    Then $Th \ge 0$ and for each $x \in S$ we have $(Th)(x) > 0$. 
    Again by the compactness of $S$ there exists $\varepsilon > 0$ such that $(Th)(x) \ge \varepsilon$ for each $x \in S$. Since $h$ vanishes outside of $S$, it follows that $Th \ge \delta h$ for some $\delta > 0$. 
    
    We also observe that the operator $T$ has individually eventually positive powers.  
    To see this, let $t_{\min}$ denote the smallest of the numbers $t_{x_1}, \dots, t_{x_m}$; 
    then $t_{\min} > 0$ since we chose $t_0$ to be non-zero. 
    Let $f \in E_+$ and consider $\hat t \in [0,\infty)$ such that $e^{tA}f \ge 0$ for each $t \ge \hat t$. 
    Choose an integer $n_0 \ge 1$ such that $t_{\min} n_0 \ge \hat t$. 
    A brief computation then shows that $T^n f \ge 0$ for every $n \ge n_0$.
    On the other hand, as $e^{tA}h \gneq 0$ for all $t \in [t_0,\infty)$, the same computation shows that $T^nh \gneq 0$ for each $n \in \bbN$.

    So the assumptions of Proposition~\ref{prop:spr} are satisfied and thus $T$ has non-zero spectral radius. 
    Since the spectral radius is subadditive on commuting operators, we conclude that at least one of the operators $e^{t_{x_1}A}$, \dots, $e^{t_{x_m}A}$ has non-zero spectral radius (and hence even each of the operators in the semigroup has non-zero spectral radius). 
    Therefore, the growth bound of the semigroup is not $-\infty$. 
    As the growth bound coincides with the spectral bound for individually eventually positive semigroups on general AM-spaces \cite[Theorem~4]{AroraGlueck2021c}, it follows that $\spec(A)$ is non-empty, as claimed.
\end{proof}

\section{A method to construct eventually positive semigroups}
    \label{sec:perturbations}

It is natural to ask for examples of eventually positive semigroups that are persistently irreducible but do not have the eventual strong positivity property~\eqref{eq:eventual-strong-positivity}. 
Actually, one can easily find positive semigroups that satisfy those conditions -- for instance the shift semigroup on $L^p$ over the complex unit circle for any $p \in [1,\infty)$.
However, it is much less clear how to find non-positive examples with precisely those properties.
The major obstacle is that, for non-positive semigroups, eventual positivity is usually checked by the characterization theorems in \cite[Section~5]{DanersGlueckKennedy2016b} and \cite[Section~3]{DanersGlueck2018b} -- but those theorems already yield eventual strong positivity. 
Thus, those theorems do not lend themselves to identifying eventually positive semigroups that are not eventually strongly positive.

In this section, we will provide a condition that can be used to construct examples.
Our approach relies on perturbation theory. 
It has been demonstrated in \cite{DanersGlueck2018a} that perturbation theory for eventual positivity has a number of serious limitations. 
However, we will see in this section that more can be said if the perturbation is known to interact well with the unperturbed semigroup. 
This allows us to obtain examples of eventual positivity in situations where the leading eigenvalue of the generator is not known and where the eventual positivity is not necessarily strong.
As a result, we can build a semigroup in Example~\ref{exa:coupling-irreducible-not-strong} that has all the desired properties listed above: 
it is eventually positive but not positive, and it is persistently irreducible but not eventually strongly positive.

\subsection{Positive perturbations}

We start with the following perturbation result that follows quite easily from the Dyson--Phillips series expansion of perturbed semigroups.

\begin{proposition}
    \label{prop:domination-by-perturbed-sg}
    Let $(e^{tA})_{t \ge 0}$ be a $C_0$-semigroup on a Banach lattice $E$ and let $B \in \calL(E)$. 
    If $e^{tA} B e^{sA} \geq 0$ for all $s,t \geq 0$, then 
    \begin{align*}
        e^{t(A+B)} - e^{tA} \geq 0
    \end{align*}
    for all $t \geq 0$.
\end{proposition}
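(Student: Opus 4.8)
The plan is to use the Dyson--Phillips series expansion of the perturbed semigroup. Recall that for $A$ the generator of a $C_0$-semigroup and $B \in \calL(E)$, the perturbed semigroup $(e^{t(A+B)})_{t \ge 0}$ is given by the uniformly (in fact locally uniformly) convergent series
\begin{align*}
    e^{t(A+B)} = \sum_{n=0}^\infty S_n(t),
\end{align*}
where $S_0(t) = e^{tA}$ and the higher-order terms are defined recursively by
\begin{align*}
    S_{n+1}(t) = \int_0^t e^{(t-s)A} B\, S_n(s) \dx s \qquad (t \ge 0, \; n \ge 0).
\end{align*}
This is a standard fact from perturbation theory (it follows, e.g., by iterating the variation-of-constants formula), so I would just cite it.

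The main point is then to show by induction on $n$ that $S_n(t) \ge 0$ for every $t \ge 0$ and every $n \ge 1$; the claim $e^{t(A+B)} - e^{tA} = \sum_{n \ge 1} S_n(t) \ge 0$ then follows because the positive cone $E_+$ is closed and the series converges in operator norm (hence, in particular, strongly). For the induction, the base case $n = 1$ is $S_1(t) = \int_0^t e^{(t-s)A} B e^{sA} \dx s$, whose integrand is positive by hypothesis for every $s \in [0,t]$; since $E_+$ is closed and convex, the (Riemann or Bochner) integral of a positive-operator-valued function is again a positive operator, so $S_1(t) \ge 0$. For the inductive step I want to rewrite $S_{n+1}(t)$ so that the hypothesis can be applied directly. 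Unfolding the recursion once more,
\begin{align*}
    S_{n+1}(t) = \int_0^t \int_0^s e^{(t-s)A} B\, e^{(s-r)A} B\, S_{n-1}(r) \dx r \dx s,
\end{align*}
and more generally $S_n(t)$ is an iterated integral of products of the form $e^{\sigma_1 A} B e^{\sigma_2 A} B \cdots B e^{\sigma_n A} B e^{\sigma_{n+1} A}$ with nonnegative $\sigma_j$ summing to $t$. Each such product is a composition of operators of the form $e^{tA}Be^{sA} \ge 0$ (grouping the factors as $(e^{\sigma_1 A}Be^{\sigma_2 A})(Be^{\sigma_3 A})\cdots$ — note a single leftover factor $e^{\sigma A}$ does not a priori appear positive, but in fact every $B$ is flanked on the left by an $e^{\sigma A}$ so the product groups cleanly as $e^{\sigma_1 A}Be^{\sigma_2 A} \cdot e^{0}Be^{\sigma_3 A}\cdots$; wait — cleaner is to use the inductive hypothesis directly as below), hence positive, and again integration preserves positivity.

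Rather than track the full multi-fold integral, the cleanest inductive step is: assume $S_n(s) \ge 0$ for all $s \ge 0$. Then for $s \le t$ write $S_{n+1}(t) = \int_0^t e^{(t-s)A} B S_n(s) \dx s$; the issue is that $e^{(t-s)A}B$ alone need not be positive. To fix this, unfold $S_n(s) = \int_0^s e^{(s-r)A} B S_{n-1}(r)\dx r$ and substitute, so that the integrand becomes $e^{(t-s)A} B e^{(s-r)A} (B S_{n-1}(r))$; the first three factors $e^{(t-s)A} B e^{(s-r)A}$ form a positive operator by hypothesis, and $B S_{n-1}(r)$ — hmm, $B$ need not be positive either. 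So the genuinely clean route is the fully-unfolded iterated-integral description above: establish by induction that
\begin{align*}
    S_n(t) = \int_{\Delta_n(t)} e^{\sigma_1 A} B e^{\sigma_2 A} B \cdots e^{\sigma_{n-1} A} B e^{\sigma_n A} \dx \sigma,
\end{align*}
where $\Delta_n(t)$ is the simplex $\{\sigma \in [0,\infty)^n : \sigma_1 + \dots + \sigma_n = t\}$ with the appropriate $(n-1)$-dimensional measure (this identity is itself routine from the recursion by a change of variables), and then observe that the integrand, for $n \ge 2$, can be grouped as
\begin{align*}
    \bigl(e^{\sigma_1 A} B e^{\sigma_2 A}\bigr)\bigl(B e^{\sigma_3 A}\bigr) \cdots
\end{align*}
— and here I realize the grouping that works is pairing each $B$ with the exponential to its \emph{left}: the product equals $e^{\sigma_1 A}\cdot (B e^{\sigma_2 A})(B e^{\sigma_3 A})\cdots(B e^{\sigma_n A})$, which still has a stray $e^{\sigma_1 A}$. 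The correct observation is simply that $e^{\sigma_1 A}Be^{\sigma_2 A} \ge 0$ and, for the remaining factors, $Be^{\sigma_j A} = e^{0\cdot A}Be^{\sigma_j A}\ge 0$ by the hypothesis with $t=0$. Hence the whole integrand is a composition of positive operators, thus positive, and integration over $\Delta_n(t)$ keeps it positive; summing and using closedness of $E_+$ finishes the proof.

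The main obstacle I anticipate is purely bookkeeping: getting the iterated-integral formula for $S_n(t)$ and its simplex change-of-variables correct, and being careful that the hypothesis ``$e^{tA}Be^{sA}\ge 0$ for all $s,t\ge 0$'' is what lets each $B$ be absorbed into an adjacent exponential (using $s=0$ or $t=0$ at the ends). There is no deep idea here — once the series is written down and one notes that positivity is preserved under composition, integration, and norm-limits, the result falls out. I would keep the write-up short by invoking the Dyson--Phillips series as a known tool and only spelling out the positivity induction.
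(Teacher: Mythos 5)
Your proof is correct and follows the same route as the paper: the Dyson--Phillips expansion together with an induction showing each term $S_n(t)$, $n \ge 1$, is positive. The only difference is that you take an unnecessary detour: your mid-proposal objection that ``$e^{(t-s)A}B$ alone need not be positive'' is unfounded, since the hypothesis with $s=0$ gives $e^{tA}B = e^{tA}Be^{0\cdot A} \ge 0$ for all $t \ge 0$, so the one-step induction $S_{n+1}(t) = \int_0^t \bigl(e^{(t-s)A}B\bigr)S_n(s)\dx s \ge 0$ goes through directly (this is exactly what the paper does) and the explicit simplex representation of $S_n(t)$ is not needed. Your final grouped-product argument over the simplex is also valid (it uses the same observation, just at every factor), but it costs you the bookkeeping you were worried about for no gain.
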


Note that under the assumptions of the proposition, $B$ is positive.
From the perspective of eventual positivity, the point of Proposition~\ref{prop:domination-by-perturbed-sg} is that any kind of eventual positivity of $(e^{tA})_{t \geq 0}$ is inherited by the perturbed semigroup $(e^{t(A+B)})_{t \geq 0}$.
Let us also point out that if $A$ is real, then so are all semigroups in the proposition, and hence one can rewrite the conclusion as 
\begin{align*}
    e^{t(A+B)} \geq e^{tA}
\end{align*}
for every $t \geq 0$.

\begin{proof}[Proof of Proposition~\ref{prop:domination-by-perturbed-sg}]
    We use the Dyson-Phillips series representation of the perturbed semigroup \cite[Theorem~III.1.10 on p.\,163]{EngelNagel2000}: 
    define $V_0(t) := e^{tA}$ for each $t \geq 0$ and, inductively, 
    \begin{align*}
        V_{n+1}(t) 
        := 
        \int_0^t e^{(t-s)A} B V_n(s) \dx s
    \end{align*}
    for each $t \geq 0$ and each integer $n \geq 0$, where the integral is to be understood in the strong sense.  
    Then 
    \begin{align*}
        e^{t(A+B)} = \sum_{n=0}^\infty V_n(t)
    \end{align*}
    for each $t \geq 0$, where the series converges absolutely with respect to the operator norm. 
    It follows from the assumption of the proposition that $V_1(t) \geq 0$ for all $t \geq 0$. 
    Since, also due to the assumption, $e^{tA}B \geq 0$ for all $t \geq 0$, one thus obtains inductively that $V_n(t) \geq 0$ for each $t \geq 0$ and each integer $n \geq 1$. 
    Thus, $e^{t(A+B)} - e^{tA} = \sum_{n=1}^\infty V_n(t) \geq 0$ for every $t$, as claimed.
\end{proof}

Proposition~\ref{prop:domination-by-perturbed-sg} gives us the option to construct examples of eventually positive semigroups by easy perturbations, without having precise spectral information about the perturbed operator $A+B$ available.
Let us first demonstrate this in a very simple finite-dimensional example.

\begin{example}
    \label{exa:matrix-third-row-and-column-positive}
    Consider the self-adjoint $3 \times 3$-matrix 
    \begin{align*}
        A := 
        \begin{pmatrix}
             7 & -1 & 3 \\ 
            -1 &  7 & 3 \\ 
             3 &  3 & 3
        \end{pmatrix}
        = 
         U 
        \begin{pmatrix}
            0 &   &   \\ 
              & 8 &   \\ 
              &   & 9 
        \end{pmatrix}
        U^*
        ,
    \end{align*}
    where $U = (u_1 \; u_2 \; u_3)$ is the unitary matrix with the columns 
    \begin{align*}
        u_1 := 
        \frac{1}{\sqrt{6}}
        \begin{pmatrix}
            -1 \\ -1 \\ 2
        \end{pmatrix}
	, 
        \quad 
        u_2 := 
        \frac{1}{\sqrt{2}}
        \begin{pmatrix}
            1 \\ -1 \\ 0
        \end{pmatrix}
        , 
        \quad
        \text{and}
        \quad 
        u_3 := 
        \frac{1}{\sqrt{3}}
        \begin{pmatrix}
            1 \\ 1 \\ 1
        \end{pmatrix}
        .
    \end{align*}
    As the rescaled semigroup $(e^{-9t} e^{tA})_{t \geq 0}$ converges to the rank-$1$ projection $u_3 u_3^*$ as $t \to \infty$, we see that $(e^{tA})_{t \geq 0}$ is (uniformly) eventually strongly positive. 
    (Alternatively, one could also derive this from \cite[Theorem~3.3]{NoutsosTsatsomeros2008}.)
    
    However, one can say more in this concrete example: 
    From the diagonalization of $A$ one obtains by a short computation that 
    \begin{align*}
	\renewcommand*{\arraystretch}{1.3} 
	\newcommand{\entryEight}{
            \frac{8^n}{2}
        }
        \newcommand{\entryNine}{
            \frac{9^n}{3}
        }
        A^n = 
	\begin{pmatrix}
            \phantom{-}\entryEight + \entryNine &          - \entryEight + \entryNine & \entryNine \\ 
            - \entryEight + \entryNine & \phantom{-}\entryEight + \entryNine & \entryNine \\ 
		\entryNine &  \entryNine & \entryNine
		\end{pmatrix}
    \end{align*}
    for every integer $n \ge 1$ (but not for $n=0$, as we dropped the term $0^n$ from the formula).
    So for every $t \ge 0$, the third row and the third column of $e^{tA}$ are positive.

    Now consider the perturbation  
    \begin{align*}
        B := 
        \begin{pmatrix}
            0 &   &   \\
              & 0 &   \\ 
              &   & b
        \end{pmatrix}
    \end{align*}
    for any number $b \geq 0$. 
    Then the matrix $e^{tA} B e^{sA}$ is positive for all $s,t \geq 0$, so it follows from Proposition~\ref{prop:domination-by-perturbed-sg} that the matrix semigroup generated by
    \begin{align*}
        A + B 
        = 
        \begin{pmatrix}
             7 & -1 & 3     \\ 
            -1 &  7 & 3     \\ 
             3 &  3 & 3 + b 
        \end{pmatrix}
    \end{align*}
    is (uniformly) eventually strongly positive for any fixed number $b \geq 0$.
\end{example}

In the above example, note that $B$ does not commute with $A$ if $b \not= 0$, so $A+B$ does not have the same eigenvectors as $A$. 
Hence, we were able to derive information about the eventual positivity of $(e^{t(A+B)})_{t \ge 0}$ without knowing the eigenvectors of the generator.
In Example~\ref{exa:coupling-irreducible-not-strong}, we will use the above matrix semigroup to construct an eventually positive semigroup (in infinite-dimensions) that is persistently irreducible but not eventually strongly positive.
To do so, it is desirable in the situation of Proposition~\ref{prop:domination-by-perturbed-sg}, to have a way to check whether the perturbed semigroup is persistently irreducible. 
The following result is useful for this purpose.

\begin{theorem}
    \label{thm:perturbed-sg-inv}
    Let $(e^{tA})_{t \ge 0}$ be a $C_0$-semigroup on a Banach lattice $E$ and let $B \in \calL(E)$.
    Assume that $e^{tA} B e^{sA} \geq 0$ for all $s,t \geq 0$ 
    and that the semigroup $(e^{tA})_{t \ge 0}$ is individually  eventually positive 
    (hence, so is $(e^{t(A+B)})_{t \ge 0}$ according to Proposition~\ref{prop:domination-by-perturbed-sg}).
    Let $I \subseteq E$ be a closed ideal that is uniformly eventually invariant under the perturbed semigroup $(e^{t(A+B)})_{t \ge 0}$.
    
    Then $I$ is uniformly eventually invariant under both the unperturbed semigroup $(e^{tA})_{t \ge 0}$ and the operator family $(e^{tA}Be^{sA})_{s,t \ge 0}$.
\end{theorem}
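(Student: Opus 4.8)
The plan is to work with the Dyson--Phillips expansion $e^{t(A+B)} = \sum_{n \ge 0} V_n(t)$ used in the proof of Proposition~\ref{prop:domination-by-perturbed-sg}, where $V_0(t) = e^{tA}$ and $V_{n+1}(t) = \int_0^t e^{(t-s)A} B V_n(s)\dx s$; recall from that proof that $V_n(t) \ge 0$ for all $n \ge 1$ and $t \ge 0$, while the integrand $s \mapsto e^{(t-s)A} B e^{sA}$ defining $V_1(t)$ is, for each $s$, a positive operator by hypothesis. Two standard reductions will be used repeatedly: since a closed ideal is a sublattice, it suffices to check (eventual) invariance on vectors $0 \le f \in I$, splitting a general $f$ into real/imaginary and positive/negative parts; and a vector $g \ge 0$ lies in $I$ if and only if $\langle \varphi, g \rangle = 0$ for every positive $\varphi \in E'$ vanishing on $I$ (equivalently, $g$ is mapped to $0$ by the quotient map onto the Banach lattice $E/I$).

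First I would show that $I$ is uniformly eventually invariant under $(e^{tA})_{t \ge 0}$. Fix $t_0$ with $e^{t(A+B)} I \subseteq I$ for $t \ge t_0$. Given $0 \le f \in I$, individual eventual positivity of $(e^{tA})_{t\ge0}$ provides a time $t_1 \ge t_0$ with $e^{tA} f \ge 0$ for $t \ge t_1$; since also $\sum_{n \ge 1} V_n(t) f \ge 0$, one obtains $0 \le e^{tA} f \le e^{t(A+B)} f \in I$, hence $e^{tA} f \in I$, for all $t \ge t_1$. Thus $I$ is individually eventually invariant under $(e^{tA})_{t\ge0}$ and, being closed, is uniformly eventually invariant by Corollary~\ref{cor:ind-implies-unif-for-closed-subspaces}.

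Next I would treat the family $(e^{tA} B e^{sA})_{s,t \ge 0}$. Pick $t_2$ with $e^{tA} I \subseteq I$ and $e^{t(A+B)} I \subseteq I$ for all $t \ge t_2$. For $0 \le f \in I$ and $t \ge t_2$, the vector $\sum_{n\ge1} V_n(t) f = e^{t(A+B)}f - e^{tA}f$ lies in $I$, and since $0 \le V_1(t) f \le \sum_{n\ge1} V_n(t) f$ and $I$ is an ideal, $\int_0^t e^{(t-s)A} B e^{sA} f\dx s = V_1(t) f \in I$. Then for every positive $\varphi \in E'$ with $\varphi|_I = 0$ we get $\int_0^t \langle \varphi, e^{(t-s)A}Be^{sA}f\rangle\dx s = 0$, and as the integrand is continuous and nonnegative (this is where $e^{(t-s)A}Be^{sA} \ge 0$ enters) it vanishes on all of $[0,t]$; since $e^{(t-s)A}Be^{sA}f \ge 0$ and $\varphi$ was arbitrary, this forces $e^{(t-s)A}Be^{sA}f \in I$ for each $s \in [0,t]$. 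Rewriting with $\tau := t - s$ and $\sigma := s$: for all $\sigma,\tau \ge 0$ with $\sigma+\tau \ge t_2$ and all $f \in I$ one has $e^{\tau A} B e^{\sigma A} f \in I$; in particular $e^{\tau A} B e^{\sigma A} I \subseteq I$ whenever $(\sigma,\tau) \ge (t_2,t_2)$, which is exactly the asserted uniform eventual invariance.

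I expect the only genuinely delicate point to be the passage, in the last step, from $V_1(t) f \in I$ to the pointwise conclusion $e^{(t-s)A}Be^{sA}f \in I$ for each individual $s$. This is precisely where the full strength of the hypothesis $e^{tA}Be^{sA}\ge 0$ for all $s,t$ is needed, rather than just the resulting positivity of $V_1(t)$: it makes the integrand of $\langle \varphi, V_1(t) f\rangle$ a nonnegative continuous scalar function, which must vanish identically once its integral is $0$, and testing against all positive functionals annihilating $I$ then pins the integrand down into $I$ pointwise. Everything else — the continuity of the Dyson--Phillips integrand $s \mapsto e^{(t-s)A}Be^{sA}f$, and the sublattice and quotient-space facts about closed ideals — is routine.
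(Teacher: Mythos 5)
Your proof is correct, and while the first half coincides with the paper's argument, the second half takes a genuinely different route. For the eventual invariance under $(e^{tA})_{t \ge 0}$ you argue, exactly as the paper does, via $0 \le e^{tA}f \le e^{t(A+B)}f \in I$ for $0 \le f \in I$ and large $t$, followed by Corollary~\ref{cor:ind-implies-unif-for-closed-subspaces}. For the family $(e^{tA}Be^{sA})_{s,t \ge 0}$, the paper first takes $0 \le f \in I \cap \dom{A}$, differentiates $e^{t(A+B)}e^{sA}f \in I$ with respect to $s$ and to $t$ (using that $I$ is closed) to extract $e^{t(A+B)}Be^{sA}f \in I$, dominates $e^{tA}Be^{sA}f$ by that vector via Proposition~\ref{prop:domination-by-perturbed-sg}, and then removes the domain restriction by mollifying with $f_n := \frac{1}{n}\int_0^{1/n}e^{rA}e^{r_0A}f \dx r$ and invoking Corollary~\ref{cor:ind-implies-unif-for-closed-subspaces} a second time. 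You instead isolate the first Dyson--Phillips term: $0 \le V_1(t)f \le \sum_{n \ge 1} V_n(t)f = e^{t(A+B)}f - e^{tA}f \in I$ forces $V_1(t)f \in I$ by the ideal property, and then testing $V_1(t)f = \int_0^t e^{(t-s)A}Be^{sA}f \dx s$ against positive functionals annihilating $I$ disintegrates the integral pointwise, the scalar integrand being continuous and nonnegative precisely because $e^{(t-s)A}Be^{sA} \ge 0$ for each fixed $s$. The two supporting facts you rely on --- that a positive vector lies in the closed ideal $I$ if and only if every positive functional vanishing on $I$ annihilates it (via the quotient lattice $E/I$), and the continuity of $s \mapsto e^{(t-s)A}Be^{sA}f$ --- are both standard and correct. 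Your route avoids the domain and mollification technicalities entirely and even yields the slightly sharper uniform conclusion $e^{\tau A}Be^{\sigma A}I \subseteq I$ whenever $\sigma + \tau$ exceeds a single threshold, with no second appeal to Corollary~\ref{cor:ind-implies-unif-for-closed-subspaces}; the paper's differentiation argument stays closer to classical perturbation-theoretic manipulations and does not need to re-enter the series representation once Proposition~\ref{prop:domination-by-perturbed-sg} is in place.
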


By saying that $I$ is uniformly eventually invariant under $(e^{tA}Be^{sA})_{s,t \ge 0}$ we mean that $e^{tA}Be^{sA}I \subseteq I$ for all sufficiently large $s$ and $t$ -- i.e., the index set $[0,\infty) \times [0,\infty)$ is endowed with the \emph{product order} given by $(s_1, t_1) \le (s_2, t_2)$ if and only if $s_1 \le s_2$ and $t_1 \le t_2$.

\begin{proof}[Proof of Theorem~\ref{thm:perturbed-sg-inv}]
    We first show the eventual invariance under $(e^{tA})_{t \ge 0}$: let $0 \le f \in I$. 
    Then it follows from the individual eventual positivity of the unperturbed semigroup and from Proposition~\ref{prop:domination-by-perturbed-sg} that $0 \le e^{tA} f \le e^{t(A+B)}f \in I$ for all sufficiently large $t$ (where the time from which on the first inequality holds, depends on $f$). 
    Hence, $I$ is individually eventually invariant under $(e^{tA})_{t \ge 0}$. 
    As $I$ is closed, Corollary~\ref{cor:ind-implies-unif-for-closed-subspaces} even gives the uniform eventual invariance. 

    To show the (individual and whence also uniform) eventual invariance under the family $(e^{tA}Be^{sA})_{s,t \ge 0}$, choose times $t_0,s_0 \ge 0$ such that $0 \le e^{t(A+B)}I \subseteq I$ for all $t \ge t_0$ and $e^{sA}I \subseteq I$ for all $s \ge s_0$. 
    Let $t \ge t_0$ and $s \ge s_0$. 
    
    First consider a vector $0 \le f \in I$ that is contained in $\dom{A} = \dom{A+B}$.
    Then 
    \begin{align*}
        e^{t(A+B)} e^{sA} f \in I.
    \end{align*}
    Since $f \in \dom{A} = \dom{A+B}$ we can take the derivative either with respect to $s$ or $t$. 
    As $I$ is closed this yields 
    \begin{align*}
        e^{t(A+B)} A e^{sA} f \in I 
        \qquad \text{and} \qquad 
        e^{t(A+B)} (A+B) e^{sA} f \in I. 
    \end{align*}
    Thus, $e^{t(A+B)} B e^{sA}f \in I$.
    By assumption $Be^{sA}f$ and $e^{tA} B e^{sA}f$ are positive, so it follows by means of Proposition~\ref{prop:domination-by-perturbed-sg}, that
    $0 \le e^{tA} B e^{sA}f \le e^{t(A+B)} B e^{sA}f$ and thus, 
    $e^{tA} B e^{sA}f \in I$.

    Finally, consider a general vector $0 \le f \in I$. 
    Choose an ($f$-dependent) time $r_0 \ge s_0$ such that $e^{rA}f \ge 0$ for all $r \ge r_0$ and define vectors
    $f_n := \frac{1}{n} \int_0^{1/n} e^{rA} e^{r_0A} f \dx r$ for all integers $n \ge 1$. 
    Then one has $0 \le f_n \in I \cap \dom{A}$ for each $n$, and thus $e^{tA} B e^{sA} f_n \in I$ by what we have already shown. 
    Since $f_n \to e^{r_0A} f$, it follows that $e^{tA} B e^{(s+r_0)A} f \in I$ for all $t \ge t_0$ and all $s \ge s_0$. 
    This shows that $I$ is individually eventually invariant under the operator family $(e^{tA}Be^{sA})_{s,t \ge 0}$, 
    and the uniform eventual invariance thus follows from Corollary~\ref{cor:ind-implies-unif-for-closed-subspaces}.
\end{proof}

\subsection{Coupling eventually positive semigroups}

A trivial way to construct a new eventually positive semigroup from two given ones is to take their direct sum; 
this construction is arguably not particularly interesting, though, as the new semigroup does not show any kind of interaction between the two original ones. 
In particular, the new semigroup will not be (persistently) irreducible, even if the two original semigroups are.
By employing the simple perturbation result from Proposition~\ref{prop:domination-by-perturbed-sg} and the eventual invariance result from Theorem~\ref{thm:perturbed-sg-inv} we will now demonstrate how a coupling term between the two semigroups can be introduced that preserves both the eventual positivity and the persistent irreducibility.

\begin{corollary}
    \label{cor:coupling-of-sg}
    Let $(e^{tA_1})_{t \geq 0}$ and $(e^{tA_2})_{t \geq 0}$ be $C_0$-semigroups on Banach lattices $E_1$ and $E_2$, respectively. 
    Let $B_{12}: E_2 \to E_1$ and $B_{21}: E_1 \to E_2$ be bounded linear operators
    such that 
    \begin{align*}
        e^{tA_1} B_{12} e^{sA_2}: E_2 \to E_1 
        \qquad \text{and} \qquad 
        e^{tA_2} B_{21} e^{sA_1}: E_1 \to E_2
    \end{align*}
    are positive for all $s,t \geq 0$ 
    (so in particular, $B_{12}$ and $B_{21}$ are positive).
    \begin{enumerate}[label=\upshape(\alph*), leftmargin=*, widest=iii]
        \item
        If both semigroups $(e^{tA_1})_{t \geq 0}$ and $(e^{tA_2})_{t \geq 0}$ are individually/uniformly eventually positive, then so is the semigroup
        $(e^{tC})_{t \ge 0}$ on $E_1 \times E_2$ generated by 
        \begin{align*}
            C :=
            \begin{pmatrix}
                A_1 &     \\ 
                    & A_2 
            \end{pmatrix}
            + 
            \begin{pmatrix}
                       & B_{12} \\ 
                B_{21} &  
            \end{pmatrix}
            .
        \end{align*}
        
        \item 
        If both $(e^{tA_1})_{t \geq 0}$ and $(e^{tA_2})_{t \geq 0}$ are individually eventually positive and persistently irreducible and both $B_{12}$ and $B_{21}$ are non-zero, then $(e^{tC})_{t \ge 0}$ is also persistently irreducible.
    \end{enumerate}
\end{corollary}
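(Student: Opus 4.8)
The plan is to view $(e^{tC})_{t\ge 0}$ as a perturbation of a diagonal semigroup and feed this into Theorem~\ref{thm:perturbed-sg-inv}. Write $E:=E_1\times E_2$, let $A:=\operatorname{diag}(A_1,A_2)$ generate the coordinatewise diagonal semigroup, and let $B\in\calL(E)$ be the off‑diagonal operator $(v_1,v_2)\mapsto(B_{12}v_2,B_{21}v_1)$, so that $C=A+B$ and
\[
    e^{tA}Be^{sA}=\begin{pmatrix} 0 & e^{tA_1}B_{12}e^{sA_2} \\ e^{tA_2}B_{21}e^{sA_1} & 0 \end{pmatrix}\ge 0
\]
for all $s,t\ge 0$ by hypothesis. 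Part~(a) is then immediate from Proposition~\ref{prop:domination-by-perturbed-sg}: it yields $e^{tC}v\ge e^{tA}v=(e^{tA_1}v_1,e^{tA_2}v_2)$ for every $v=(v_1,v_2)\in E_+$, and the right‑hand side is eventually positive (individually, resp.\ uniformly) because each of its coordinates is.

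For part~(b) I would use the standard fact that every closed ideal of $E$ has the form $I_1\times I_2$ with $I_j$ a closed ideal of $E_j$. So let $I=I_1\times I_2$ be a closed ideal that is uniformly eventually invariant under $(e^{tC})_{t\ge 0}$; I must show $I\in\{\{0\},E\}$. The hypotheses of Theorem~\ref{thm:perturbed-sg-inv} hold (by the display above and part~(a)), so $I$ is uniformly eventually invariant under $(e^{tA})_{t\ge 0}$ and under $(e^{tA}Be^{sA})_{s,t\ge 0}$. From the first, $e^{tA_j}I_j\subseteq I_j$ for all large $t$, so persistent irreducibility of $(e^{tA_j})_{t\ge 0}$ forces $I_j\in\{\{0\},E_j\}$ for $j=1,2$. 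Hence, after possibly interchanging the two indices, it remains only to exclude the case $I_1=E_1$, $I_2=\{0\}$.

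Assume this case. Eventual invariance of $I=E_1\times\{0\}$ under $(e^{tA}Be^{sA})_{s,t}$ says precisely that $e^{tA_2}B_{21}e^{sA_1}=0$ for all large $s$ and $t$. Fix such an $s$ and take $x\in(E_1)_+$; then $g:=B_{21}e^{sA_1}x$ is positive (since $B_{21}e^{sA_1}=e^{0\cdot A_2}B_{21}e^{sA_1}\ge 0$) and $e^{tA_2}g=0$ for all large $t$. I claim $g=0$: otherwise, exactly as in Case~1 of the proof of Theorem~\ref{thm:irreducibility-eventually-positive}, individual eventual positivity of $(e^{tA_2})_{t\ge 0}$ makes the orbit of every vector of the principal ideal $(E_2)_g$ eventually vanish, so Proposition~\ref{prop:ind-implies-unif-for-closed-subspaces-absorbing} (with $W=\{0\}$) makes $\overline{(E_2)_g}$ a non‑zero, uniformly eventually invariant closed ideal of $E_2$; persistent irreducibility then forces $\overline{(E_2)_g}=E_2$, i.e.\ $(e^{tA_2})_{t\ge 0}$ is nilpotent, which a persistently irreducible semigroup on a non‑trivial space cannot be. Therefore $B_{21}e^{sA_1}x=0$ for every $x\in(E_1)_+$, hence $B_{21}e^{sA_1}=0$ for all large $s$. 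Now set $J:=\{x\in E_1:B_{21}\modulus{x}=0\}$; positivity of $B_{21}$ makes this a closed ideal, and $J\ne E_1$ because $B_{21}\ne 0$. For $x\in(E_1)_+$ and $s$ large one has $e^{sA_1}x\ge 0$ and $B_{21}e^{sA_1}x=0$, so $e^{sA_1}x\in J$; decomposing a general $x$ into real/imaginary and then positive/negative parts and using that $J$ is an ideal, $e^{sA_1}x\in J$ for all large $s$ and all $x\in E_1$. Since $J$ is closed, Corollary~\ref{cor:ind-implies-unif-for-closed-subspaces} promotes this to uniform eventual invariance of $J$, so persistent irreducibility of $(e^{tA_1})_{t\ge 0}$ gives $J=\{0\}$. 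But then $e^{sA_1}x=0$ for all large $s$ and all $x$, making $(e^{tA_1})_{t\ge 0}$ nilpotent by Proposition~\ref{prop:ind-implies-unif-for-closed-subspaces-absorbing} — again contradicting persistent irreducibility (note $E_1\ne\{0\}$ since $B_{21}\ne 0$). This contradiction excludes $I_1=E_1$, $I_2=\{0\}$; the mirror‑image argument with $B_{12}\ne 0$ excludes $I_1=\{0\}$, $I_2=E_2$, so $I\in\{\{0\},E\}$ and $(e^{tC})_{t\ge 0}$ is persistently irreducible.

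The one genuinely non‑formal step is passing from $e^{tA_2}B_{21}e^{sA_1}=0$ to $B_{21}e^{sA_1}=0$: cancelling the leading factor $e^{tA_2}$ is exactly where persistent irreducibility of the \emph{second} semigroup (not merely its eventual positivity) is needed, and it is handled by the nilpotency dichotomy borrowed from Case~1 of the proof of Theorem~\ref{thm:irreducibility-eventually-positive}. Everything else is bookkeeping with ideals of a product lattice, together with the operator‑range results of Section~\ref{sec:eve-invarance} as packaged in Corollary~\ref{cor:ind-implies-unif-for-closed-subspaces} and Theorem~\ref{thm:perturbed-sg-inv}.
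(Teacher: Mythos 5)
Your proof is correct, and for part~(a) as well as the overall skeleton of part~(b) it matches the paper: decompose $I$ as $I_1\times I_2$, apply Theorem~\ref{thm:perturbed-sg-inv} to get uniform eventual invariance under the diagonal semigroup and under the off-diagonal family, and let persistent irreducibility of the factors force $I_j\in\{\{0\},E_j\}$. You differ only in how the mixed case $I_1=E_1$, $I_2=\{0\}$ is excluded. The paper argues directly: it picks $0\lneq f_1\in I_1$ and a positive $\varphi_2\in E_2'$ with $B_{21}'\varphi_2\gneq 0$, applies the duality characterization of persistent irreducibility (Condition~\ref{con:irreducibility}\ref{con:irreducibility:itm:weak-large-t} via Theorem~\ref{thm:irreducibility-eventually-positive}) to $(e^{sA_1})_{s\ge 0}$ to find $s\ge t_0$ with $B_{21}e^{sA_1}f_1\ne 0$, and then invokes Corollary~\ref{cor:persistently-irred-non-zero} to see that $e^{t_0A_2}B_{21}e^{sA_1}f_1$ is a non-zero element of $I_2$, a contradiction. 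You instead start from $e^{tA_2}B_{21}e^{sA_1}=0$, cancel the left factor by a nilpotency dichotomy (this step is exactly the content of Corollary~\ref{cor:persistently-irred-non-zero}, which you could cite instead of re-running Case~1 of the proof of Theorem~\ref{thm:irreducibility-eventually-positive}), and then show that the null ideal $J$ of $B_{21}$ eventually absorbs every orbit of $(e^{sA_1})_{s\ge 0}$, forcing $J=\{0\}$ and hence nilpotency of that semigroup. Both routes are valid; the paper's is shorter because the duality characterization does the work in one stroke, while your null-ideal argument is a self-contained alternative that never tests against functionals. Two small remarks: where you invoke Corollary~\ref{cor:ind-implies-unif-for-closed-subspaces} for $J$, you have in fact proved the stronger absorbing statement that every $x\in E_1$ is eventually mapped into $J$, so Proposition~\ref{prop:ind-implies-unif-for-closed-subspaces-absorbing} applies directly and already yields the nilpotency contradiction; and, like the paper's own proof, your argument implicitly uses real-part decompositions, i.e.\ it is tacitly assumed that the semigroups are real.
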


\begin{proof}
    (a) 
    Due to Proposition~\ref{prop:domination-by-perturbed-sg}, we have that $e^{tC} - e^{tA_1} \oplus e^{tA_2} \ge 0$ for each $t \ge 0$, from which the assertion readily follows.

    (b) 
    Let $I \subseteq E_1 \times E_2$ be a closed ideal that is uniformly eventually invariant under $(e^{tC})_{t \ge 0}$. 
    Then $I = I_1 \times I_2$ for closed ideals $I_1 \subseteq E_1$ and $I_2 \subseteq E_2$. 
    It follows from Theorem~\ref{thm:perturbed-sg-inv} that we can find a time $t_0 \ge 0$ such that $I = I_1 \times I_2$ is invariant under both $e^{tA_1} \oplus e^{tA_2}$ and
    \begin{align*}
        \begin{pmatrix}
            0                        & e^{tA_1} B_{12} e^{sA_2} \\ 
            e^{tA_2} B_{21} e^{sA_1} & 0
        \end{pmatrix}
    \end{align*}
    for all $s,t \ge t_0$.
    As $(e^{tA_1})_{t \ge 0}$ and $(e^{tA_2})_{t \ge 0}$ are persistently irreducible, it follows that $I_1 = \{0\}$ or $I_1 = E_1$ and likewise for $I_2$.

    So it only remains to check that the two cases (1)~ $I_1 = E_1$, $I_2 = \{0\}$ and (2)~$I_1 = \{0\}$, $I_2 = E_2$ cannot occur. 
    Suppose $I_1 = E_1$. 
    As $B_{21}$ is non-zero, the spaces $E_1,E_2$ are non-zero, so we can choose a vector $0 \lneq f_1 \in I_1$. 
    Moreover, as the dual operator $B_{21}'$ is also non-zero, there exists $0 \lneq \varphi_2 \in E_2'$ such that $B_{21}'\varphi_2 \not= 0$. 
    Due to Theorem~\ref{thm:irreducibility-eventually-positive}, the semigroup $(e^{sA_1})_{s \ge 0}$ satisfies Condition~\ref{con:irreducibility}\ref{con:irreducibility:itm:weak-large-t}, so there exists $s \ge t_0$ such that
    \begin{align*}
        \langle \varphi_2, B_{21} e^{sA_1} f_1 \rangle 
        = 
        \langle B_{21}' \varphi_2, e^{sA_1} f_1 \rangle 
        \ne 
        0
    \end{align*}
    and hence, $B_{21}e^{sA_1} f_1 \not= 0$. 
    By applying Corollary~\ref{cor:persistently-irred-non-zero} to the semigroup $(e^{tA_2})_{t \ge 0}$, we conclude that $e^{t_0 A_2} B_{21}e^{sA_1} f_1 \not= 0$. 
    Due to the choice of $t_0$, we have the inclusion $e^{t_0 A_2} B_{21}e^{sA_1} I_1 \subseteq I_2$, so it follows that $I_2 \not= \{0\}$, as claimed. 

    By swapping the roles of $I_1$ and $I_2$, we see that the case $I_1 = \{0\}$, $I_2 = E_2$ cannot occur, either.
\end{proof}

\begin{remark}
    The situation in Corollary~\ref{cor:coupling-of-sg} can be interpreted in a system theoretic sense: 
    On the state spaces $E_1$ and $E_2$ consider the input-output-systems 
    \begin{align*}
        \dot x_1 & = A_1 x_1 + B_1 u_1, & \dot x_2 & = A_2 x_2 + B_2 u_2, 
        \\ 
        y_1 & = C_1 x_1,  & y_2 & = C_2 x_2,
    \end{align*}
    respectively, 
    where $B_k: U_k \to E_k$ are bounded linear operators defined on Banach spaces $U_k$ and $C_k: E_k \to Y_k$ are bounded linear operators to Banach spaces $Y_k$.
    Here, $u_k: [0,\infty) \to U_k$ are interpreted as input signals and $y_k: [0,\infty) \to Y_k$ are interpreted as output signals for each $k \in \{1,2\}$.

    Now we consider bounded linear operators $G_{21}: Y_1 \to U_2$ and $G_{12}: Y_2 \to U_1$ and couple both systems by setting $u_1 := G_{12} y_2$ and $u_2 := G_{21}y_1$. 
    Thus we obtain the coupled differential equation 
    \begin{align*}
        \begin{pmatrix}
            \dot x_1 \\ 
            \dot x_2 
        \end{pmatrix}
        \begin{pmatrix}
            A_1 &     \\ 
                & A_2 
        \end{pmatrix}
        \begin{pmatrix}
            x_1 \\ 
            x_2
        \end{pmatrix}
        + 
        \begin{pmatrix}
                           & B_1 G_{12} C_2 \\ 
            B_2 G_{21} C_1 &  
        \end{pmatrix}
        \begin{pmatrix}
            x_1 \\ 
            x_2
        \end{pmatrix}
        ,
    \end{align*}
    which leads to the operator described in Example~\ref{cor:coupling-of-sg}(a) if one sets $B_{12} := B_1 G_{12} C_2$ and $B_{21} := B_2 G_{21} C_1$.
\end{remark}

We know from Proposition~\ref{prop:irred-analytic-strongly} that if an irreducible uniformly eventually positive semigroup is analytic, then it is even uniformly eventually strongly positive. In the following, we use Corollary~\ref{cor:coupling-of-sg} to construct a non-positive example which shows that one cannot drop the analyticity assumption in this proposition. 

\begin{example}
    \label{exa:coupling-irreducible-not-strong}
    \emph{A non-positive but uniformly eventually positive semigroup that is persistently irreducible but not individually eventually strongly positive in the sense of~\eqref{eq:eventual-strong-positivity}.} 

    Set $E_1 := \bbC^3$ and $E_2 := L^1(\bbR)$. 
    Let $A_1 \in \bbC^{3 \times 3}$ be the matrix $A$ from Example~\ref{exa:matrix-third-row-and-column-positive}, which generates a (uniformly) eventually strongly positive semigroup according to that example. 
    
    Moreover, we choose a semigroup $(e^{tA_2})_{t \geq 0}$ on $L^1(\bbR)$ as follows: 
    for each $t \in (0,\infty)$ let $k_t \in L^1(\bbR)$ be the density function of the Gamma distribution whose mean and variance are both equal to $t$, i.e., $k_t(x) = \frac{1}{\Gamma(t)} x^{t-1} e^{-x}$ for $x \in (0,\infty)$ and $k_t(x) = 0$ for $x \in (-\infty,0]$.
    Then $k_s \star k_t = k_{s+t}$ for all $s,t > 0$ and the convolution with $k_t$ defines a positive $C_0$-semigroup on $L^1(\bbR)$.
    Additionally, for every $t \ge 0$, let $L_t: L^1(\bbR) \to L^1(\bbR)$ be the left shift by $t$. 
    Since each $L_t$ commutes with convolutions, we obtain a $C_0$-semigroup $(e^{tA_2})_{t \ge 0}$ on $L^1(\bbR)$ by setting 
    \begin{align*}
        e^{tA_2} f := L_t (k_t \star f)
    \end{align*}
    for all $t > 0$ and $f \in L^1(\bbR)$ (and $e^{0A_2} := \id$).
    This semigroup is clearly positive and it is (persistently) irreducible for the following reason: 
    let $0 \le f \in L^1(\bbR)$ be non-zero and choose $x_0 \in \bbR$ such that $f$ has non-zero integral over $(-\infty, x_0)$. 
    Then the strict positivity of $k_t$ on $(0,\infty)$ implies that $k_t \star f$ is strictly positive on $[x_0,\infty)$ for every $t > 0$. 
    Hence, $e^{tA_2}f$ is strictly positive on $[x_0-t,\infty)$ for every $t > 0$. 
    So Condition~\ref{con:irreducibility}\ref{con:irreducibility:itm:weak-arbi-t} is satisfied and hence, persistent irreducibility of $(e^{tA_2})_{t \ge 0}$ follows by Proposition~\ref{prop:irreducibility-positive}.

    We now couple the $C_0$-semigroups $(e^{tA_1})_{t \ge 0}$ and $(e^{tA_2})_{t \ge 0}$ to obtain a semigroup on $E_1 \times E_2 = \bbC^3 \times L^1(\bbR)$ as described in Corollary~\ref{cor:coupling-of-sg}(a): 
    choose the operator $B_{21}: \bbC^3 \to L^1(\bbR)$ as $B_{21} z = z_3 \one_{[1,2]}$ for each $z= (z_1,z_2,z_3) \in \bbC^3$ 
    and $B_{12}: L^1(\bbR) \to \bbC^3$ as $B_{12}f = \int_{[-2,-1]} f(x) \dx x \; e_3$ for each $f \in L^1(\bbR)$; where $e_3 \in \bbC^3$ denotes the third canonical unit vector.

    Positivity of the semigroup $(e^{tA_2})_{t \ge 0}$ and of the third row and column of $e^{tA_1}$ for each $t > 0$ (see Example~\ref{exa:matrix-third-row-and-column-positive}) yields that the assumptions of Corollary~\ref{cor:coupling-of-sg} are satisfied. 
    Hence, part~(a) of the corollary tells us that the semigroup $(e^{tC})_{t \ge 0}$ on $E_1 \times E_2$ generated by the operator
    \begin{align*}
        C :=
        \begin{pmatrix}
            A_1 &     \\ 
                 & A_2 
        \end{pmatrix}
        + 
        \begin{pmatrix}
                     & B_{12} \\ 
            B_{21} &  
        \end{pmatrix}
    \end{align*}
    is uniformly eventually positive. 

    However, the semigroup is not positive. 
    To see this we show that, for each $z\in \bbC^3$, the first component of $e^{tC}(z,0)$ is equal to $e^{tA_1}z$ for all $t < 2$.  
    Indeed, let us use the following notation: 
    for each $n \in \bbN_0$ and each $t \ge 0$ let $V_n(t): E_1 \times E_2 \to E_1 \times E_2$ denote the $n$-th term of the Dyson--Phillips series representation of $(e^{tC})_{t \ge 0}$ that we already used in the proof of Proposition~\ref{prop:domination-by-perturbed-sg} \cite[Theorem~III.1.10 on p.\,163]{EngelNagel2000}. 
    For each $t \in [0,\infty)$, one has 
    \begin{align*}
        V_0(t) 
        \begin{pmatrix}
            z \\ 
            0
        \end{pmatrix}
        = 
        \begin{pmatrix}
            e^{tA_1}z \\ 
            0
        \end{pmatrix} 
        \quad \text{and} \quad 
        V_1(t) 
        \begin{pmatrix}
            z \\ 
            0
        \end{pmatrix}
        = 
        \begin{pmatrix}
            0 \\ 
            \int_0^t e^{(t-s)A_2} B_{21} e^{sA_1}z \dx s
        \end{pmatrix}
        .
    \end{align*}
    Further, for each $t \in [0,2)$ the function $\int_0^t e^{(t-s)A_2} B_{21} e^{sA_1}z \dx s \in L^1(\bbR)$ only lives within the spatial interval $[-1,\infty)$, so it is in the kernel of $B_{12}$. 
    Hence, for all $t \in [0,2)$ one has 
    \begin{align*}
        V_2(t) 
        \begin{pmatrix}
            z \\ 
            0
        \end{pmatrix}
        = 
        \begin{pmatrix}
            0 \\ 
            0
        \end{pmatrix}
        ,
        \quad \text{and thus} \quad 
        V_n(t) 
        \begin{pmatrix}
            z \\ 
            0
        \end{pmatrix}
        = 
        \begin{pmatrix}
            0 \\ 
            0
        \end{pmatrix}
        \quad \text{for all } n \ge 2
        .
    \end{align*}
    So the first component of  $e^{tC} (z, 0)$ is equal to $e^{tA_1} z$ for all $t \in [0,2)$, as claimed.
    As the semigroup $(e^{tA_1})$ is not positive we can choose a vector $0 \le z \in \bbR^3$ such that $e^{tA_1}z \not\ge 0$ for some $t \in (0,2)$, so $(e^{tC})_{t \ge 0}$ is indeed not positive.

    Next, we note that $(e^{tC})_{t \ge 0}$ is persistently irreducible. 
    This follows from Corollary~\ref{cor:coupling-of-sg}(b): 
    the semigroup $(e^{tA_1})_{t \ge 0}$ is eventually strongly positive (see Example~\ref{exa:matrix-third-row-and-column-positive}) and is thus persistently irreducible. 
    The semigroup $(e^{tA_2})_{t \ge 0}$ is, as observed above, also persistently irreducible. 
    As $B_{12}$ and $B_{21}$ are non-zero, part~(b) of Corollary~\ref{cor:coupling-of-sg} is applicable, as claimed. 

    Finally, we show that $(e^{tC})_{t \ge 0}$ is not eventually strongly positive in the sense of~\eqref{eq:eventual-strong-positivity}.
    To this end we use the Dyson--Phillips series representation of $(e^{tC})_{t \ge 0}$ again. 
    It can be checked by induction that, for every $z \in \bbC^3$ and each $t \ge 0$, the second component of $V_n(t)(z,0)$ is supported in the interval $[1-t,\infty)$ for each $n \in \bbN_0$.
    Hence, the same is true for the second component of $e^{tC}(z,0)$ for every $t \ge 0$.
    So, $e^{tC}(z, 0)$ is not a quasi-interior point of $E_1 \times E_2$ for any $t \ge 0$ and any $0 \le z \in \bbR^3$.
\end{example}

\subsection*{Acknowledgements} 

This article is based upon work from COST Action CA18232 MAT-DYN-NET, supported by COST (European Cooperation in Science and Technology).
The article was initiated during a very pleasant visit of both authors at the University of Salerno in Spring'22. 
The first author is indebted to COST Action 18232 and the second author to the Department of Mathematics of the University of Salerno for financial support for this visit.

\bibliographystyle{plainurl}
\bibliography{literature}

\begin{thebibliography}{10}

\bibitem{AliprantisBurkinshaw2006}
Charalambos~D. Aliprantis and Owen Burkinshaw.
\newblock {\em Positive operators}.
\newblock Springer, Dordrecht, 2006.
\newblock Reprint of the 1985 original.
\newblock \href {https://doi.org/10.1007/978-1-4020-5008-4}
  {\path{doi:10.1007/978-1-4020-5008-4}}.

\bibitem{ArendtBattyHieberNeubrander2011}
Wolfgang {Arendt}, Charles J.~K. {Batty}, Matthias {Hieber}, and Frank
  {Neubrander}.
\newblock {\em {Vector-valued {L}aplace transforms and {C}auchy problems}},
  volume~96.
\newblock Basel: Birkh\"auser, 2 edition, 2011.
\newblock \href {https://doi.org/10.1007/978-3-0348-0087-7}
  {\path{doi:10.1007/978-3-0348-0087-7}}.

\bibitem{ArnoldCoine2023}
Loris Arnold and Cl{\'e}ment Coine.
\newblock Growth rate of eventually positive {K}reiss bounded
  {{\(C_0\)}}-semigroups on {{\(L^p\)}} and {{\(\mathcal{C}(K)\)}}.
\newblock {\em J. Evol. Equ.}, 23(1):18, 2023.
\newblock Id/No 7.
\newblock \href {https://doi.org/10.1007/s00028-022-00860-0}
  {\path{doi:10.1007/s00028-022-00860-0}}.

\bibitem{Arora2022}
Sahiba {Arora}.
\newblock {Locally eventually positive operator semigroups}.
\newblock {\em {J. Oper. Theory}}, 88(1):203--242, 2022.
\newblock \href {https://doi.org/10.7900/jot.2021jan26.2316}
  {\path{doi:10.7900/jot.2021jan26.2316}}.

\bibitem{Arora2023}
Sahiba Arora.
\newblock {\em {Long-term behavior of operator semigroups and (anti-)maximum
  principles}}.
\newblock PhD thesis, Technische Universit{\"a}t Dresden, 2023.
\newblock \url{https://nbn-resolving.org/urn:nbn:de:bsz:14-qucosa2-834573}.

\bibitem{AroraGlueck2021a}
Sahiba {Arora} and Jochen {Gl\"uck}.
\newblock {Spectrum and convergence of eventually positive operator
  semigroups}.
\newblock {\em {Semigroup Forum}}, 103(3):791--811, 2021.
\newblock \href {https://doi.org/10.1007/s00233-021-10204-y}
  {\path{doi:10.1007/s00233-021-10204-y}}.

\bibitem{AroraGlueck2021c}
Sahiba {Arora} and Jochen {Gl\"uck}.
\newblock Stability of (eventually) positive semigroups on spaces of continuous
  functions.
\newblock {\em C. R., Math., Acad. Sci. Paris}, 360:771--775, 2022.
\newblock \href {https://doi.org/10.5802/crmath.323}
  {\path{doi:10.5802/crmath.323}}.

\bibitem{AroraGlueck2022a}
Sahiba {Arora} and Jochen {Gl\"uck}.
\newblock A characterization of the individual maximum and anti-maximum
  principle.
\newblock {\em Math. Z.}, 305(2):17, 2023.
\newblock Id/No 24.
\newblock \href {https://doi.org/10.1007/s00209-023-03348-w}
  {\path{doi:10.1007/s00209-023-03348-w}}.

\bibitem{BatkaiKramarRhandi2017}
Andr\'as {B\'atkai}, Marjeta~K. {Fijav\v{z}}, and Abdelaziz {Rhandi}.
\newblock {\em {Positive operator semigroups: From finite to infinite
  dimensions}}, volume 257.
\newblock Basel: Birkh\"auser/Springer, 2017.
\newblock \href {https://doi.org/10.1007/978-3-319-42813-0}
  {\path{doi:10.1007/978-3-319-42813-0}}.

\bibitem{BeckerGregorioMugnolo2021}
Simon {Becker}, Federica {Gregorio}, and Delio {Mugnolo}.
\newblock {Schr\"odinger and polyharmonic operators on infinite graphs:
  parabolic well-posedness and \(p\)-independence of spectra}.
\newblock {\em {J. Math. Anal. Appl.}}, 495(2):44, 2021.
\newblock Id/No 124748.
\newblock \href {https://doi.org/10.1016/j.jmaa.2020.124748}
  {\path{doi:10.1016/j.jmaa.2020.124748}}.

\bibitem{Cairns2021}
Hannah Cairns.
\newblock Perron's theorem in an hour.
\newblock {\em Am. Math. Mon.}, 128(8):748--752, 2021.
\newblock \href {https://doi.org/10.1080/00029890.2021.1944755}
  {\path{doi:10.1080/00029890.2021.1944755}}.

\bibitem{Cross1980}
Ronald~W. {Cross}.
\newblock {On the continuous linear image of a Banach space}.
\newblock {\em {J. Aust. Math. Soc.}}, 29:219--234, 1980.
\newblock \href {https://doi.org/10.1017/S1446788700021200}
  {\path{doi:10.1017/S1446788700021200}}.

\bibitem{CrossOstrovskiiShevchik1995}
Ronald~W. {Cross}, Mikhail~I. {Ostrovskii}, and Vitalii~V. {Shevchik}.
\newblock {Operator ranges in Banach spaces, I}.
\newblock {\em {Math. Nachr.}}, 173:91--114, 1995.
\newblock \href {https://doi.org/10.1002/mana.19951730107}
  {\path{doi:10.1002/mana.19951730107}}.

\bibitem{Daners2014}
Daniel {Daners}.
\newblock {Non-positivity of the semigroup generated by the
  Dirichlet-to-Neumann operator}.
\newblock {\em {Positivity}}, 18(2):235--256, 2014.
\newblock \href {https://doi.org/10.1007/s11117-013-0243-7}
  {\path{doi:10.1007/s11117-013-0243-7}}.

\bibitem{DanersGlueck2017}
Daniel {Daners} and Jochen {Gl\"uck}.
\newblock {The role of domination and smoothing conditions in the theory of
  eventually positive semigroups}.
\newblock {\em {Bull. Aust. Math. Soc.}}, 96(2):286--298, 2017.
\newblock \href {https://doi.org/10.1017/S0004972717000260}
  {\path{doi:10.1017/S0004972717000260}}.

\bibitem{DanersGlueck2018b}
Daniel {Daners} and Jochen {Gl\"uck}.
\newblock {A criterion for the uniform eventual positivity of operator
  semigroups}.
\newblock {\em {Integral Equations Oper. Theory}}, 90(4):19, 2018.
\newblock Id/No 46.
\newblock \href {https://doi.org/10.1007/s00020-018-2478-y}
  {\path{doi:10.1007/s00020-018-2478-y}}.

\bibitem{DanersGlueck2018a}
Daniel {Daners} and Jochen {Gl\"uck}.
\newblock {Towards a perturbation theory for eventually positive semigroups}.
\newblock {\em {J. Oper. Theory}}, 79(2):345--372, 2018.
\newblock \href {https://doi.org/10.7900/jot.2017mar29.2148}
  {\path{doi:10.7900/jot.2017mar29.2148}}.

\bibitem{DanersGlueckKennedy2016b}
Daniel {Daners}, Jochen {Gl\"uck}, and James~B. {Kennedy}.
\newblock {Eventually and asymptotically positive semigroups on Banach
  lattices}.
\newblock {\em {J. Differ. Equations}}, 261(5):2607--2649, 2016.
\newblock \href {https://doi.org/10.1016/j.jde.2016.05.007}
  {\path{doi:10.1016/j.jde.2016.05.007}}.

\bibitem{DanersGlueckKennedy2016a}
Daniel {Daners}, Jochen {Gl\"uck}, and James~B. {Kennedy}.
\newblock {Eventually positive semigroups of linear operators}.
\newblock {\em {J. Math. Anal. Appl.}}, 433(2):1561--1593, 2016.
\newblock \href {https://doi.org/10.1016/j.jmaa.2015.08.050}
  {\path{doi:10.1016/j.jmaa.2015.08.050}}.

\bibitem{DanersGlueckMui2023}
Daniel {Daners}, Jochen {Gl\"uck}, and Jonathan {Mui}.
\newblock {Local uniform convergence and eventual positivity of solutions to
  biharmonic heat equations}.
\newblock {\em Differential Integral Equations}, 36:727--756, 2023.
\newblock \href {https://doi.org/10.57262/die036-0910-727}
  {\path{doi:10.57262/die036-0910-727}}.

\bibitem{DenkKunzePloss2021}
Robert {Denk}, Markus {Kunze}, and David {Plo{\ss}}.
\newblock {The Bi-Laplacian with Wentzell boundary conditions on Lipschitz
  domains}.
\newblock {\em {Integral Equations Oper. Theory}}, 93(2):26, 2021.
\newblock Id/No 13.
\newblock \href {https://doi.org/10.1007/s00020-021-02624-w}
  {\path{doi:10.1007/s00020-021-02624-w}}.

\bibitem{EngelNagel2000}
Klaus-Jochen {Engel} and Rainer {Nagel}.
\newblock {\em {One-parameter semigroups for linear evolution equations}},
  volume 194.
\newblock Berlin: Springer, 2000.
\newblock \href {https://doi.org/10.1007/b97696} {\path{doi:10.1007/b97696}}.

\bibitem{GaoTroitsky2014}
Niushan Gao and Vladimir~G. Troitsky.
\newblock Irreducible semigroups of positive operators on {Banach} lattices.
\newblock {\em Linear Multilinear Algebra}, 62(1):74--95, 2014.
\newblock \href {https://doi.org/10.1080/03081087.2012.762715}
  {\path{doi:10.1080/03081087.2012.762715}}.

\bibitem{Glueck2016}
Jochen {Gl\"uck}.
\newblock {\em {Invariant sets and long time behaviour of operator
  semigroups}}.
\newblock PhD thesis, Universit{\"a}t Ulm, 2016.
\newblock \href {https://doi.org/10.18725/OPARU-4238}
  {\path{doi:10.18725/OPARU-4238}}.

\bibitem{Glueck2018}
Jochen {Gl\"uck}.
\newblock Growth rates and the peripheral spectrum of positive operators.
\newblock {\em Houston J. Math.}, 44(3):847--872, 2018.
\newblock URL: \url{www.math.uh.edu/~hjm/restricted/pdf44(3)/08glueck.pdf}.

\bibitem{Glueck2022}
Jochen Gl{\"u}ck.
\newblock {Evolution equations with eventually positive solutions}.
\newblock {\em Eur. Math. Soc. Mag.}, 123:4--11, 2022.
\newblock \href {https://doi.org/10.4171/MAG-65} {\path{doi:10.4171/MAG-65}}.

\bibitem{GregorioMugnolo2020a}
Federica {Gregorio} and Delio {Mugnolo}.
\newblock Bi-{Laplacians} on graphs and networks.
\newblock {\em J. Evol. Equ.}, 20(1):191--232, 2020.
\newblock \href {https://doi.org/10.1007/s00028-019-00523-7}
  {\path{doi:10.1007/s00028-019-00523-7}}.

\bibitem{GregorioMugnolo2020b}
Federica {Gregorio} and Delio {Mugnolo}.
\newblock {Higher-order operators on networks: hyperbolic and parabolic
  theory}.
\newblock {\em Integral Equations Oper. Theory}, 92(6):22, 2020.
\newblock Id/No 50.
\newblock \href {https://doi.org/10.1007/s00020-020-02610-8}
  {\path{doi:10.1007/s00020-020-02610-8}}.

\bibitem{HusseinMugnolo2020}
Amru {Hussein} and Delio {Mugnolo}.
\newblock Laplacians with point interactions -- expected and unexpected
  spectral properties.
\newblock In {\em Semigroups of operators -- theory and applications. Selected
  papers based on the presentations at the conference, SOTA 2018, Kazimierz
  Dolny, Poland, September 30 -- October 5, 2018. In honour of Jan Kisy\'nski's
  85th birthday}, pages 47--67. Cham: Springer, 2020.
\newblock \href {https://doi.org/10.1007/978-3-030-46079-2_3}
  {\path{doi:10.1007/978-3-030-46079-2_3}}.

\bibitem{Meyer-Nieberg1991}
Peter {Meyer-Nieberg}.
\newblock {\em {Banach lattices}}.
\newblock Berlin, Heidelberg: Springer-Verlag, 1991.
\newblock \href {https://doi.org/10.1007/978-3-642-76724-1}
  {\path{doi:10.1007/978-3-642-76724-1}}.

\bibitem{Mui2023}
Jonathan Mui.
\newblock Spectral properties of locally eventually positive operator
  semigroups.
\newblock {\em Semigroup Forum}, 106:460--480, 2023.
\newblock \href {https://doi.org/10.1007/s00233-023-10347-0}
  {\path{doi:10.1007/s00233-023-10347-0}}.

\bibitem{Nagel1986}
Rainer {Nagel}, editor.
\newblock {\em {One-parameter semigroups of positive operators}}, volume 1184
  of {\em Lecture Notes in Mathematics}.
\newblock Cham: Springer, 1986.
\newblock \href {https://doi.org/10.1007/BFb0074922}
  {\path{doi:10.1007/BFb0074922}}.

\bibitem{NoutsosTsatsomeros2008}
Dimitrios {Noutsos} and Michael~J. {Tsatsomeros}.
\newblock {Reachability and holdability of nonnegative states}.
\newblock {\em {SIAM J. Matrix Anal. Appl.}}, 30(2):700--712, 2008.
\newblock \href {https://doi.org/10.1137/070693850}
  {\path{doi:10.1137/070693850}}.

\bibitem{Peruzzetto2022}
Marco Peruzzetto.
\newblock On eventual regularity properties of operator-valued functions.
\newblock {\em Stud. Math.}, 265(2):141--176, 2022.
\newblock \href {https://doi.org/10.4064/sm201202-21-10}
  {\path{doi:10.4064/sm201202-21-10}}.

\bibitem{Schaefer1974}
Helmut~H. {Schaefer}.
\newblock {\em {Banach lattices and positive operators}}, volume 215.
\newblock Cham: Springer, 1974.
\newblock \href {https://doi.org/10.1007/978-3-642-65970-6}
  {\path{doi:10.1007/978-3-642-65970-6}}.

\bibitem{ShakeriAlizadeh2017}
F.~Shakeri and R.~Alizadeh.
\newblock Nonnegative and eventually positive matrices.
\newblock {\em Linear Algebra Appl.}, 519:19--26, 2017.
\newblock \href {https://doi.org/10.1016/j.laa.2016.12.036}
  {\path{doi:10.1016/j.laa.2016.12.036}}.

\bibitem{Vogt2022}
Hendrik Vogt.
\newblock Stability of uniformly eventually positive {{\(C_0\)}}-semigroups on
  {{\(L_p\)}}-spaces.
\newblock {\em Proc. Am. Math. Soc.}, 150(8):3513--3515, 2022.
\newblock \href {https://doi.org/10.1090/proc/15926}
  {\path{doi:10.1090/proc/15926}}.

\bibitem{Zaanen1983}
Adriaan~C. Zaanen.
\newblock {\em Riesz spaces. {II}}, volume~30 of {\em North-Holland
  Mathematical Library}.
\newblock North-Holland Publishing Co., Amsterdam, 1983.
\newblock \href {https://doi.org/10.1016/S0924-6509(08)70234-4}
  {\path{doi:10.1016/S0924-6509(08)70234-4}}.

\end{thebibliography}

\end{document}